\definecolor{heteroppca}{RGB}{ 53, 96,240}
\definecolor{homoppca}{RGB}{255,160,253}
\definecolor{group1}  {RGB}{ 73,227,176}
\definecolor{group2}  {RGB}{255,166, 23}
\definecolor{inv}  {RGB}{ 10,112, 56}
\definecolor{sqinv}{RGB}{247,  5,  5}
\newcommand{\new}[1] {{\color{blue}#1}}
\newcommand{\edit}[1]{\textcolor{black}{#1}}
\newcommand{\visedit}[1] {{\color{black}#1}}
\newcommand{\revise}[1] {{\color{black}#1}}
\long\def\new#1{\bgroup\color{black}#1\egroup} % for new text added in revision
\newcommand{\argmin}{\operatorname*{argmin}}
\newcommand{\argmax}{\operatorname*{argmax}}
\newcommand{\tr}{\operatorname{tr}}
\newcommand{\I}{\mathbf{I}}
\newcommand{\R}{\mathbb{R}}
\newcommand{\Y}{\mathbf{Y}}
\newcommand{\X}{\mathbf{X}}
\newcommand{\A}{\mathbf{A}}
\newcommand{\B}{\mathbf{B}}
\newcommand{\D}{\mathbf{D}}
\newcommand{\M}{\blmath{M}}
\newcommand{\K}{{\cal K}}
\newcommand{\Cc}{{\cal C}}
\newcommand{\rank}{{\mathrm{rank}}}
\newtheorem{assumption}{Assumption}[theorem]
\newcommand{\xmath}[1] {\ensuremath{#1}\xspace}
\newcommand{\blmath}[1] {\xmath{\mathbf{#1}}}
\newcommand{\sym}[1] {\xmath{\mathrm{sym}\left( #1 \right)}}
\newcommand{\E}{\blmath{E}}
\newcommand{\N}{\blmath{N}}
\newcommand{\Ss}{\blmath{S}}
\newcommand{\Cb}{\overline{\bmC}}
\newcommand{\Qb}{\overline{\bmQ}}
\newcommand{\Ub}{\overline{\bmU}}
\newcommand{\ub}{\overline{\bmu}}
\newcommand{\Lambdab}{\overline{\bmLambda}}
\newcommand{\Xb}{\overline{\bmX}}
\newcommand{\Zb}{\overline{\bmZ}}
\newcommand{\Yb}{\overline{\bmY}}
\newcommand{\nub}{\overline{\bmnu}}
\newcommand{\nubi}{\overline{\nu}}
\newcommand{\Mb}{\overline{\M}}
\newcommand{\bb}{\overline{\bmb}}
\newcommand{\cb}{\overline{\bmc}}
\newcommand*\bigcdot{\mathpalette\bigcdot@{.5}}
\newcommand*\bigcdot@[2]{\mathbin{\vcenter{\hbox{\scalebox{#2}{$\m@th#1\bullet$}}}}}
\DeclareMathOperator{\Opt}{Opt}
\DeclareMathOperator{\Feas}{Feas}
\DeclareMathOperator{\Relint}{Relint}
\newcommand{\Lc}{{\cal L}}
\DeclareMathOperator{\Range}{Range}
\DeclareMathOperator{\Ext}{Ext}
\newcommand{\SSS}{{\mathbb{S}}}
\crefname{hypothesis}{Hypothesis}{Hypotheses}
\title{A Semidefinite Relaxation for Sums of Heterogeneous Quadratic Forms on the Stiefel Manifold\thanks{
\funding{K. Gilman and L. Balzano were supported in part by ARO YIP award W911NF1910027, AFOSR YIP award FA9550-19-1-0026, and NSF BIGDATA award IIS-1838179. L. Balzano was additionally supported by NSF award 2331590.}}}
\author{Kyle Gilman\thanks{Department of Electrical and Computer Engineering, University of Michigan, Ann Arbor, MI 
  (\email{kgilman@umich.edu}).}
\and Samuel Burer\thanks{Department of Business Analytics, University of Iowa, Iowa City, IA 
  (\email{samuel-burer@uiowa.edu}).}
\and Laura Balzano\footnotemark[2]}
\DeclareMathOperator{\diag}{diag}
\newcommand*{\addFileDependency}[1]{
  \typeout{(#1)}
  \@addtofilelist{#1}
  \IfFileExists{#1}{}{\typeout{No file #1.}}
}
\begin{document}

% \bibliographyunit[\chapter]

\maketitle

% REQUIRED
\begin{abstract}
We study the maximization of sums of heterogeneous quadratic forms over the Stiefel manifold, a nonconvex problem that arises in several modern signal processing and machine learning applications such as heteroscedastic probabilistic principal component analysis (HPPCA). In this work, we derive a novel semidefinite program (SDP) relaxation of the original problem and study a few of its theoretical properties. We prove a global optimality certificate for the original nonconvex problem via a dual certificate, which leads to a simple feasibility problem to certify global optimality of a candidate solution on the Stiefel manifold. In addition, our relaxation reduces to an assignment linear program  for jointly diagonalizable problems and is therefore known to be tight in that case. We generalize this result to show that it is also tight for close-to jointly diagonalizable problems, and we show that the HPPCA problem has this characteristic. Numerical results validate our global optimality certificate and sufficient conditions for when the SDP is tight in various problem settings.
\end{abstract}

% % REQUIRED
% \begin{keywords}
% Semidefinite programming, heteroscedastic probabilistic principal component analysis, Stiefel manifold optimization
% \end{keywords}

% % REQUIRED
% \begin{MSCcodes}
% 62H25, 90C22, 90C26, 90C46
% \end{MSCcodes}

\section{Introduction}
\label{s:intro}

This paper studies the problem known in the literature as {\em the maximization of sums of heterogeneous quadratic forms over the Stiefel manifold\/}\footnote{We note here that ``heterogeneous" refers to the fact that the $\bmM_i$ are distinct and the problem is not separable in each $\bmu_i$. Indeed, the objective in \cref{eq:sum_heterogeneous_quadratics} is a homogeneous polynomial in the entries of $\bmU$ since all terms are degree 2.} \cite{brockett1989least, bolla:98, rapcsak2002minimization, Berezovskyi2008}. Specifically, given $d \times d$ symmetric positive semidefinite (PSD) matrices $\bmM_1,\hdots,\bmM_k \succeq 0$ for $k < d$, we wish to maximize the convex objective function $\sum_{i=1}^k  \bmu_i' \bmM_i \bmu_i$ over the nonconvex constraint that $\bmU = [\bmu_1 \cdots \bmu_k] \in \bbR^{d\times k}$ has orthonormal columns: 
\begin{align}
\label{eq:sum_heterogeneous_quadratics}
    \max_{\bmU \in \text{St}(k,d) }\sum_{i=1}^k  \bmu_i' \bmM_i \bmu_i,
\end{align}
where $\text{St}(k,d) = \{\bmU \in \bbR^{d \times k} : \bmU' \bmU = \I_k \}$ denotes the Stiefel manifold. This problem arises in modern signal processing and machine learning applications like heteroscedastic probabilistic principal component analysis (HPPCA) \cite{hong2021heppcat}, generalized PPCA \cite{gu2020generalized}, heterogeneous
clutter in radar sensing \cite{SunBreloy2016}, and robust sparse PCA \cite{breloyMMStiefel2021}. Each of these applications involves learning a signal subspace for data possessing heterogeneous statistics.

In particular, HPPCA \cite{hong2021heppcat} models data collected from sources of varying quality with different additive noise variances, and estimates the best approximating low-dimensional subspace by maximizing the likelihood, providing superior estimation compared to standard PCA. Specifically, we are given $L$ data groups $(\bmY_1,\hdots,\bmY_L)$ \edit{where each $\bmY_\ell \in \bbR^{d \times n_\ell}$ represents a matrix of $n_\ell$ samples of a $d$-dimensional signal plus additive Gaussian noise with variance $v_\ell$}. Using second-order statistics $\bmA_\ell: = \frac{1}{v_\ell}\bmY_\ell \bmY_\ell' \succeq 0$ for $\ell \in [L]$ and known positive weights $w_{\ell,i}$ for $(\ell, i) \in [L] \times [k]$, a subproblem of HPPCA involves optimizing the sum of Brockett cost functions \cite[Section 4.8]{AbsMahSep2008} with respect to a $k$-dimensional orthonormal basis $\bmU$, and can be equivalently recast in the form \cref{eq:sum_heterogeneous_quadratics} as follows:
\begin{align}
\label{eq:origobj}
    \max_{\bmU: \bmU' \bmU = \I} \sum_{\ell=1}^L \tr(\bmU' \bmA_\ell \bmU \bmW_\ell ) = \max_{\bmU: \bmU' \bmU = \I} \sum_{\ell=1}^L \sum_{i=1}^k  w_{\ell,i} \bmu_i'  \bmA_\ell \bmu_i =\max_{\bmU: \bmU' \bmU = \I}\sum_{i=1}^k  \bmu_i' \bmM_i \bmu_i,
\end{align}
where $\bmW_\ell := \text{diag}(\{w_{\ell,i}\}_{i=1}^k)$ for all $\ell$ and $\bmM_i := \sum_{\ell=1}^L w_{\ell,i} \bmA_\ell$ for all $i$. Other sensing problems such as independent component analysis (ICA) \cite{theis_ica_jd} and approximate joint diagonalization (AJD) \cite{pham:hal-00371941} also model data with heterogeneous statistics and optimize objective functions of a similar form, as we discuss in Section \ref{s:related_work}.

For (\ref{eq:origobj}), the case of a single Brockett cost function ($L=1$) has a known analytical solution obtained by the SVD or eigendecomposition  \cite[Section 4.8]{AbsMahSep2008}, whereas analytical solutions are not known for $L \ge 2$. Indeed, for $L \ge 2$ and general $\bmA_\ell$, few, if any, guarantees for optimal recovery exist except in special cases, such as when the constructed $\bmM_i$ commute \cite{bolla:98}. Generally speaking, existing theory only gives restrictive sufficient conditions for global optimality that are typically  difficult to check in practice. Given that (\ref{eq:sum_heterogeneous_quadratics}) is nontrivial and challenging in several ways---nonconvex due to the Stiefel manifold constraint, non-separable because of the weighted sum of objectives, and not readily solved by singular value or eigenvalue decomposition---many works apply iterative local solvers to (\ref{eq:sum_heterogeneous_quadratics}). 

However, given the nonconvexity of \cref{eq:sum_heterogeneous_quadratics}, these local approaches do not find a global maximum in general. An alternative approach is to relax problems such as (\ref{eq:sum_heterogeneous_quadratics}) to a semidefinite program (SDP), allowing the use of standard convex solvers. While the SDP has stronger optimality guarantees, the challenge is then to derive conditions under which the SDP is tight, i.e., returns an optimal solution to the original nonconvex problem. SDP relaxations such as the ``Fantope" \cite{MR308159,MR1146651} exist for solving PCA-like problems, but to the best of our knowledge, no previous convex methods exist to solve (\ref{eq:sum_heterogeneous_quadratics}). 
\setlength\parfillskip{0pt plus .75\textwidth}
\setlength\emergencystretch{1pt}

The main contribution of this paper is a novel convex SDP relaxation of (\ref{eq:sum_heterogeneous_quadratics}), whose constraint set is related to the Fantope but distinct.
By studying this SDP and its optimality criteria, we derive sufficient conditions to certify the global optimality of
% a local stationary point, e.g.,
\edit{any candidate solution obtained from any iterative solver for the nonconvex problem.}
We then propose a straightforward method to certify global optimality by solving a much smaller SDP feasibility problem that scales favorably with the problem dimension. Our work also generalizes existing results for \cref{eq:sum_heterogeneous_quadratics} with commuting matrices to the case with ``almost commuting'' matrices, showing that as long as the data matrices are within an open neighborhood of a commuting tuple of data matrices (to be defined precisely in Section \ref{s:theory:ss:continuity}), the SDP is tight and provably recovers an optimal solution of (\ref{eq:sum_heterogeneous_quadratics}).

\paragraph{Notation} We use boldface, upper case letters $\bmA$ to denote matrices, boldface, lower case letters $\bmv$ to denote vectors, and italic, lowercase letters $c$ for scalars. We denote the cone of $d \times d$ symmetric positive semidefinite matrices as $\mathbb{S}^d_+$, and use $\bmA \succeq 0$ to denote an element $\bmA \in \mathbb{S}^d_+$. We denote the Hermitian transpose of a matrix as $\bmA'$, the trace of a matrix as $\tr(\bmA)$, and the inner product of matrices (with identical dimensions) $\langle \bmA, \bmB \rangle := \tr(\bmA'\bmB)$. \edit{We also make use of the notation $[\bmA, \bmB] = 0$ for commuting square matrices $\bmA$ and $\bmB$ of the same sizes, which is equivalent to $\bmA \bmB - \bmB \bmA = 0$ where here $0$ is the zero matrix.} The spectral norm of a matrix is denoted by $\|\bmA\|$, the Frobenius norm by $\|\bmA\|_{\mathrm{F}}$, and the trace norm by $\|\bmA\|_{\mathrm{tr}} := \sqrt{\frac{1}{d} \sum_{i,j = 1}^d |\mathbf{A}_{i,j}|^2} = \frac{1}{\sqrt{d}}\|\bmA\|_\mathrm{F}$. The identity matrix of size $d$ is denoted as $\I_d$. \visedit{Finally, we denote} $[k] := \{1,\hdots,k\}$.

\section{Semidefinite program relaxation}
\label{s:sdp}

By relaxing the considered nonconvex problem \cref{eq:sum_heterogeneous_quadratics} to a convex one, the well-established principles of convex optimization permit us to study when an optimal solution of the SDP relaxation recovers a global maximum of \cref{eq:sum_heterogeneous_quadratics} and importantly, when a given local stationary point is a global maximum. After re-expressing the original problem using equivalent constraints, we lift the variables into the cone of PSD matrices, relax the nonconvex constraints to convex surrogates, and obtain an SDP.

First, we begin by slightly rewriting \cref{eq:sum_heterogeneous_quadratics} and the Stiefel manifold constraints as
\begin{align}
    \max_{\bmu_1, \hdots, \bmu_k} & \tr \left( \sum_{i=1}^k \bmM_i \bmu_i \bmu_i'  \right) \quad \text{s.t. } \tr \left( \bmu_i\bmu_i'\right) = 1 \quad \forall i \in [k], \quad \tr\left(\bmu_j\bmu_i'\right) = 0 \quad \forall i\neq j.
\end{align}

\noindent Letting $\bmX_i = \bmu_i\bmu_i' \in \bbR^{d \times d}$
\edit{and using the eigenvalue structure of the rank-$k$ projection matrix $\sum_{i=1}^k \bmu_i\bmu_i'$},
this is equivalent to the lifted problem:
\begin{align}
\begin{split}
    \max_{\bmX_1,\hdots,\bmX_k}  \tr \left( \sum_{i=1}^k \bmM_i \bmX_i \right) \quad
    \text{s.t.}&~  \lambda_j\left(\sum_{i=1}^k \bmX_i\right) \in \{ 0, 1\} \quad \forall j\in[d] \\
    & \tr(\bmX_i) = 1 , \quad {\rank(\bmX_i) = 1}, \quad \bmX_i \succcurlyeq 0 \quad \forall i\in [k],
    \label{eq:ncvxprimal}
\end{split}
\end{align}
where $\lambda_j( \cdot )$ indicates the $j$-th eigenvalue of its argument. Note that this
problem is nonconvex due to {the rank constraint and} the constraint that  the eigenvalues are binary.
Similar to the relaxations in \cite{vu2013fantope, luo2010sdp}, we relax the eigenvalue constraint in \eqref{eq:ncvxprimal}
to $0 \preccurlyeq \sum_{i=1}^k \bmX_i \preccurlyeq \I\;$ {and remove the rank constraint}, which yields the SDP relaxation we consider throughout the remainder of this work:

\begin{align}
    \begin{split}
    \revise{p^* = \max_{\bmX_1,\hdots,\bmX_k}}& \quad \tr\left(\sum_{i=1}^k \bmM_i \bmX_i\right) \label{eq:primal_problem}\quad \\
    &\text{s.t.}~\sum_{i=1}^k \bmX_i \preccurlyeq \I, \quad \tr(\bmX_i) = 1, \quad \bmX_i \succcurlyeq 0 \quad i=1,\dots,k. 
    \end{split}
    \tag{SDP-P}
\end{align}
Note that $0 \preccurlyeq \sum_{i=1}^k \bmX_i$ can be omitted since it is already satisfied when $\bmX_i \succcurlyeq 0$ for all $i$. 

The feasible set of (\ref{eq:primal_problem}) is closely related to the convex set found in \cite{vu2013fantope, luo2010sdp, garber2022efficient} called the {\em Fantope}. The Fantope is the convex hull of all matrices $\bmU \bmU' \in \mathbb{R}^{d \times d}$ such that $\bmU \in \mathbb{R}^{d \times k}$ and $\bmU' \bmU = \I$ \cite{MR308159,MR1146651}.
Indeed, our relaxation can be viewed as providing a decomposition of the Fantope variable $\bmX = \bmU \bmU'$ into the sum $\bmX_1 + \cdots + \bmX_k$ such that each $\bmX_i$ satisfies $\tr(\bmX_i) = 1$ and $0 \preceq \bmX_i \preceq \I$. This decomposition allows (\ref{eq:primal_problem}) to capture the exact form of the objective function, which sums the individual terms $\tr(\bmM_i \bmX_i)$. 

\sloppypar{Precisely, the feasible set of (\ref{eq:primal_problem}) is a convex relaxation of the set $\left\{ (\bmu_1 \bmu_1',\ldots,\bmu_k \bmu_k'): \bmU' \bmU = \I \right\}$. Naturally, one wonders whether our relaxation always solves the original nonconvex problem. We show in Appendix \ref{sec:counterexample} that it does not, \edit{using a counter example that demonstrates our relaxation does not exactly capture the convex hull, which is a necessary condition for the relaxation to be tight for all objectives.} Our work therefore studies this SDP in two ways: first, we provide a global optimality certificate; second, we study a class of ``close-to jointly diagonalizable" problem instances, which includes the heteroscedastic PCA problem, and show that the SDP is tight for this class.}

For dual variables $\bmZ_i \in \mathbb{S}_+^{d}$ for $i \in [k]$, $\bmY \in \mathbb{S}_+^d$, $\bmnu \in \bbR^k$, the dual of \cref{eq:primal_problem}, 
%{(derived in \cref{appendix:dual:derivation})}, 
which will play a central role in the theory
of this paper, is
\begin{align}
\label{eq:dual_problem} \tag{SDP-D}
    d^* = \min_{\bmY, \bmZ_i, \bmnu}& \tr(\bmY) + \sum_{i=1}^k \nu_i     
    \quad \text{s.t. }
    \bmY \succcurlyeq 0, \quad \bmY = \bmM_i + \bmZ_i - \nu_i\I, \quad \bmZ_i \succcurlyeq 0 \quad \forall i\in[k].
\end{align}
The derivation of \cref{eq:dual_problem} in \cref{appendix:dual:derivation} follows by standard analysis of the Lagrangian. However, a short proof of weak duality also verifies that \cref{eq:dual_problem} upper bounds \cref{eq:primal_problem}:
\begin{align*}
    \sum_{i=1}^k \tr(\M_i \bmX_i) &= \sum_{i=1}^k \tr((\bmY - \bmZ_i + \nu_i \bmI) \bmX_i) \\
    &= \tr\left( \bmY  \sum_{i=1}^k \bmX_i \right) - \sum_{i=1}^k \tr(\bmZ_i \bmX_i) + \sum_{i=1}^k \nu_i \tr(\bmX_i) \\
    &\leq \tr(\bmY) + \sum_{i=1}^k \nu_i, 
\end{align*}
where the inequality follows from the constraints in \cref{eq:primal_problem} and \cref{eq:dual_problem}. Therefore $p^* \leq d^*$.
Since the constraint set of \cref{eq:primal_problem} is closed and bounded with non-empty interior, and strong duality holds by \revise{the following lemma, then there exists an optimal primal solution to \cref{eq:primal_problem} and optimal dual solution to \cref{eq:dual_problem}.
\begin{lemma} \label{lem:strongdualityholds}
    If $k < d$, strong duality holds for the SDP relaxation with primal \eqref{eq:primal_problem} and dual \eqref{eq:dual_problem}.
\end{lemma}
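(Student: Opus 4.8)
The plan is to deduce strong duality from Slater's condition via the conic duality theorem. Since \eqref{eq:primal_problem} and \eqref{eq:dual_problem} form a primal--dual pair of conic programs over (products of) the self-dual cone $\mathbb{S}^d_+$ --- as one checks directly by forming the Lagrangian dual of \eqref{eq:primal_problem}, attaching multipliers $\bmY \succeq 0$, $\bmZ_i \succeq 0$, and free $\nu_i$ to the constraints $\sum_i \bmX_i \preceq \I$, $\bmX_i \succeq 0$, and $\tr(\bmX_i) = 1$ --- weak duality holds automatically, and it remains only to rule out a duality gap. For this it suffices to exhibit a strictly feasible point of one of the two programs whose optimal value is finite.

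I would certify strict feasibility on the \emph{dual} \eqref{eq:dual_problem}, not the primal, because \eqref{eq:primal_problem} need not be strictly feasible: when $k = d$, every primal-feasible $\bmX$ satisfies $\tr(\sum_i \bmX_i) = \sum_i \tr(\bmX_i) = k = d = \tr(\I)$, which together with $\sum_i \bmX_i \preceq \I$ forces $\sum_i \bmX_i = \I$, so no feasible point obeys $\sum_i \bmX_i \prec \I$. The dual, by contrast, is strictly feasible for every $k \le d$: take $\bmY = \I_d$, $\nu_i = \|\M_i\|$, and $\bmZ_i = (1 + \|\M_i\|)\,\I_d - \M_i$. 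Then $\bmY = \M_i + \bmZ_i - \nu_i \I$ holds by construction, $\bmY = \I_d \succ 0$, and every eigenvalue of $\bmZ_i$ equals $1 + \|\M_i\| - \lambda_j(\M_i) \ge 1 > 0$, so $\bmZ_i \succ 0$; hence $(\bmY, \{\bmZ_i\}, \bmnu)$ is a Slater point of \eqref{eq:dual_problem}.

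Finiteness of the dual value is then immediate. Primal feasibility already gives it via weak duality --- for instance $\bmX_i = \tfrac{1}{d}\I_d$ is feasible for \eqref{eq:primal_problem}, since $\tr(\bmX_i) = 1$, $\bmX_i \succeq 0$, and $\sum_i \bmX_i = \tfrac{k}{d}\I \preceq \I$ as $k \le d$. Alternatively, one may bound the dual objective directly: eliminating $\nu_i$ through $d\,\nu_i = \tr(\M_i) + \tr(\bmZ_i) - \tr(\bmY)$ on the dual feasible set yields $\tr(\bmY) + \sum_i \nu_i = (1 - \tfrac{k}{d})\,\tr(\bmY) + \tfrac{1}{d}\sum_i \tr(\M_i) + \tfrac{1}{d}\sum_i \tr(\bmZ_i) \ge 0$. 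With a strictly feasible dual of finite value, the conic duality theorem then gives that strong duality holds between \eqref{eq:primal_problem} and \eqref{eq:dual_problem} and that the primal optimum is attained. (When $k < d$, the primal point $\tfrac{1}{d}\I_d$ is itself strictly feasible, which additionally yields attainment of the dual optimum.)

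The argument is otherwise routine; the one point requiring care is the choice to verify Slater's condition on the dual, since the primal loses strict feasibility precisely in the boundary case $k = d$.
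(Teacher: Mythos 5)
Your proof is correct and, importantly, repairs a gap in the paper's own argument. The paper proves \cref{lem:strongdualityholds} by appealing to primal strict feasibility (\cref{lem:slater}), exhibiting $\bmX_i = \tfrac{1}{d}\I$ and claiming $\sum_i \bmX_i = \tfrac{k}{d}\I \prec \I$. That claim fails precisely in the boundary case $k = d$, which the paper's setup ($k \le d$) permits: when $k = d$, every primal-feasible tuple has $\tr(\sum_i \bmX_i) = d = \tr(\I)$, which together with $\sum_i \bmX_i \preceq \I$ forces $\sum_i \bmX_i = \I$, so no Slater point exists on the primal side --- exactly as you observe. Your route instead verifies Slater's condition on the dual \eqref{eq:dual_problem}, with the point $\bmY = \I$, $\nu_i = \|\M_i\|$, $\bmZ_i = (1+\|\M_i\|)\I - \M_i \succ 0$, which is strictly feasible for \emph{every} $k \le d$. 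Combined with boundedness of the dual value (which you establish both by weak duality against the feasible primal point $\tfrac{1}{d}\I$ and by the direct trace computation $\tr(\bmY) + \sum_i \nu_i = (1-\tfrac{k}{d})\tr(\bmY) + \tfrac{1}{d}\sum_i \tr(\M_i) + \tfrac{1}{d}\sum_i \tr(\bmZ_i) \ge 0$), the conic duality theorem gives zero gap and primal attainment unconditionally. So: same basic tool (Slater/conic duality), but applied on the dual side rather than the primal side, and for good reason --- the paper's choice of side only works for $k < d$, while yours covers the full stated regime $k \le d$. The observation that primal strict feasibility gives dual attainment (and vice versa), so that for $k < d$ both optima are attained, is a nice bonus the paper doesn't spell out.
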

The proof of this lemma follows from Slater's condition and can be found in \cref{appendix:sdp_relaxation_pfs}.

We now define the ``rank-one property'' of a feasible solution of
\eqref{eq:primal_problem}, which allows us to characterize the
relationship between optimal solutions of 
\eqref{eq:primal_problem} and optimal solutions of the original
nonconvex problem.}

\begin{definition}[Rank-one property (ROP)] 
    A feasible solution to \cref{eq:primal_problem} is said to have the rank-one property if $\bmX_1,\hdots,\bmX_k$ are all rank-one.
\end{definition}
We note that if a feasible solution has the rank-one property, the first singular vectors of the $\bmX_i$ are necessarily mutually orthogonal, and $\sum_i \bmX_i$ is a rank-$k$ projection matrix, due to the constraint $\sum_i \bmX_i \preccurlyeq \I$.  \revise{The following lemma establishes the relationship between the properties of the optimal solutions of (SDP-P) to those of the original nonconvex problem.}
\revise{
\begin{lemma}
\label{lem:projtight}
    \revise{An} optimal solution $\bmX^* := (\bmX^*_1,\hdots,\bmX^*_k)$ to the SDP relaxation in \eqref{eq:primal_problem} is \revise{an} optimal solution to the original nonconvex problem in \eqref{eq:sum_heterogeneous_quadratics} (equivalently \eqref{eq:ncvxprimal}) if and only if $\bmX^*$ has the rank-one property.
    % $\bmX_i^*$ are rank-1 and $\sum_{i=1}^k \bmX_i^*$ is a rank-$k$ projection matrix. 
\end{lemma}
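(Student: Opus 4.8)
The plan is to prove both directions of the ``if and only if'' by exploiting the fact that the feasible set of \eqref{eq:primal_problem} is a relaxation of $\{(\bmu_1\bmu_1',\dots,\bmu_k\bmu_k'): \bmU'\bmU = \I\}$, so that the optimal value $-p^*$ upper-bounds the optimal value of \eqref{eq:sum_heterogeneous_quadratics}, and then showing that the rank-one property is exactly the condition under which this bound is attained by a genuine Stiefel point. First I would establish the easy direction: suppose the optimal $\bmX_i$ all have the rank-one property, so $\bmX_i = \bmu_i\bmu_i'$ for some unit vectors $\bmu_i$. The remark following the ROP definition already records that the $\bmu_i$ must then be mutually orthogonal (because $\sum_i \bmX_i \preccurlyeq \I$ together with $\tr(\sum_i \bmX_i) = k$ forces $\sum_i \bmX_i$ to be a rank-$k$ projection, and a sum of $k$ rank-one PSD matrices equalling a rank-$k$ projection forces orthonormality). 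Hence $\bmU = [\bmu_1 \cdots \bmu_k] \in \text{St}(k,d)$ is feasible for \eqref{eq:sum_heterogeneous_quadratics}, and $\sum_i \bmu_i'\M_i\bmu_i = \tr(\sum_i \M_i\bmX_i) = -p^*$. Since $-p^*$ is an upper bound on the nonconvex optimum and this value is achieved by a feasible point, $\bmU$ is a global maximizer and the SDP optimum coincides with the nonconvex optimum.

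For the converse, suppose the SDP relaxation solves the original problem, i.e., $-p^*$ equals the nonconvex optimal value $v^*$, attained at some $\bmU^\star \in \text{St}(k,d)$. Then $(\bmu_1^\star (\bmu_1^\star)',\dots)$ is feasible for \eqref{eq:primal_problem} with objective value $-v^* = p^*$, so it is an \emph{optimal} solution of the SDP, and it has the rank-one property by construction. This shows that \emph{an} optimal solution with ROP exists; to conclude the lemma as stated (``the optimal $\bmX_i$ have the ROP'') I would note that the statement should be read as asserting existence of such an optimal solution, or alternatively observe that any optimal $\bmX_i$ must have objective-optimal structure --- but in general SDP optimizers need not be unique, so the cleanest reading is: the SDP is tight $\iff$ it admits an optimal solution with the rank-one property. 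I would phrase the converse accordingly, extracting the rank-one optimal point directly from the optimal Stiefel solution.

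The main obstacle is the subtlety in the converse direction regarding uniqueness of optimizers: literally ``the optimal $\bmX_i$ have the rank-one property'' could fail if there are multiple optima, some rank-one and some not, yet the SDP value still equals the nonconvex value. I expect the intended and correct statement is the equivalence between tightness of the relaxation and the existence of a rank-one optimal solution, so I would state and prove it in that form, remarking that whenever a local solver returns a stationary point of \eqref{eq:sum_heterogeneous_quadratics} whose lifted version is SDP-optimal, ROP certifies global optimality. The remaining ingredients --- that feasibility of the lift gives an upper bound, and that a rank-$k$ projection decomposed into $k$ rank-one PSD summands of unit trace must be an orthonormal frame --- are short linear-algebra arguments I would include inline.
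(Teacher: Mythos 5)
Your proof follows essentially the same route as the paper's: the ``if'' direction combines the observation that $\tr(\bmX_i)=1$, $\bmX_i$ rank one, and $\sum_i \bmX_i \preceq \I$ force the first singular vectors $\bmu_i$ to be orthonormal (hence feasible for \eqref{eq:sum_heterogeneous_quadratics}), with the fact that the SDP value upper-bounds the nonconvex optimum because its feasible set is strictly larger; both arguments match the paper's. Your ``only if'' direction is also in the same spirit, but you are right to flag the subtlety: the paper's proof reads the hypothesis as ``the optimal $\bmX_i$ literally equal $\bmu_i\bmu_i'$ for an orthonormal $\bmU$,'' which makes the conclusion tautological, whereas the natural reading of tightness (value-equality) only yields that \emph{some} optimal solution has the rank-one property, since SDP optimizers need not be unique. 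Your reformulation --- tightness $\iff$ existence of a ROP-optimal solution --- is the mathematically precise version, and it is how the lemma is actually used downstream (e.g.~in \cref{thm:dual_certificate}, where one exhibits a particular rank-one feasible point and certifies it); so your reading is the correct one and would make a cleaner statement than the one in the paper.
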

The proof of this lemma can be
found in \cref{appendix:sdp_relaxation_pfs}.
The next lemma now relates the properties of the optimal solutions to (SDP-D) to optimal solutions of (SDP-P) with the ROP.
}

\revise{\begin{lemma}
\label{lem:Zrank_orthoXi}
    If the optimal dual variables $\bmZ_i^*$ for $i=1,\hdots,k$ each have rank $d-1$, the optimal solution 
    $\bmX^* := (\bmX^*_1,\hdots,\bmX^*_k)$ has the rank-one property.
\end{lemma}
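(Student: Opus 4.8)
The plan is to use complementary slackness from strong duality (\cref{lem:strongdualityholds}) to pin down the structure of each optimal primal variable $\bmX_i$ from the rank of the corresponding dual variable $\bmZ_i$. Concretely, at a primal-dual optimal pair, KKT complementary slackness gives $\langle \bmZ_i, \bmX_i \rangle = 0$, i.e., $\bmZ_i \bmX_i = 0$, so the column space of $\bmX_i$ must lie in the null space of $\bmZ_i$. If $\bmZ_i$ has rank $d-1$, its null space is one-dimensional, which forces $\rank(\bmX_i) \le 1$. Combined with the primal feasibility constraint $\tr(\bmX_i) = 1$ (which rules out $\bmX_i = 0$), we conclude $\rank(\bmX_i) = 1$ for every $i$, i.e., the rank-one property holds.

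The steps, in order, would be: (i) invoke \cref{lem:strongdualityholds} to guarantee existence of primal and dual optimizers with zero duality gap, so the KKT conditions — in particular complementary slackness between the PSD constraint $\bmX_i \succcurlyeq 0$ and its multiplier $\bmZ_i \succcurlyeq 0$ — hold; (ii) write out that complementary slackness condition, namely $\tr(\bmZ_i \bmX_i) = 0$, and use the standard fact that for two PSD matrices this is equivalent to $\bmZ_i \bmX_i = 0$, hence $\mathrm{range}(\bmX_i) \subseteq \ker(\bmZ_i)$; (iii) use $\rank(\bmZ_i) = d-1$ to get $\dim \ker(\bmZ_i) = 1$, so $\rank(\bmX_i) \le 1$; (iv) use $\tr(\bmX_i) = 1 \ne 0$ to exclude $\bmX_i = 0$, giving $\rank(\bmX_i) = 1$; (v) since this holds for all $i \in [k]$, conclude via the definition of ROP (and optionally note the connection to \cref{lem:projtight} that this is then globally optimal for the nonconvex problem, though that is not strictly needed for the statement).

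One subtlety I would want to handle carefully is justifying that complementary slackness between $\bmX_i$ and $\bmZ_i$ is exactly the relevant condition, rather than some coupled condition involving $\bmY$ and $\bmnu$. This requires writing the Lagrangian of \eqref{eq:primal_problem} explicitly and identifying $\bmZ_i$ as the multiplier attached to $\bmX_i \succcurlyeq 0$ (with $\bmY$ the multiplier for $\sum_i \bmX_i \preccurlyeq \I$ and $\nu_i$ for the trace constraints), matching the form of the dual \eqref{eq:dual_problem}; then the complementary slackness term for the conic constraint $\bmX_i \succcurlyeq 0$ is precisely $\langle \bmZ_i, \bmX_i\rangle = 0$. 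This is the main (mild) obstacle — it is really just bookkeeping to confirm the dual variable labeled $\bmZ_i$ in \eqref{eq:dual_problem} is the one paired with $\bmX_i$. Everything else is elementary linear algebra: the equivalence $\langle \bmZ_i, \bmX_i\rangle = 0 \iff \bmZ_i\bmX_i = 0$ for PSD matrices (via the trace of a product of PSD matrices being a sum of nonnegative terms), and the rank-nullity count.
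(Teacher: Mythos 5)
Your proof is correct and matches the paper's argument essentially step for step: complementary slackness gives $\bmZ_i \bmX_i = 0$, so $\mathrm{range}(\bmX_i) \subseteq \ker(\bmZ_i)$, which has dimension $1$ when $\rank(\bmZ_i) = d-1$, and $\tr(\bmX_i) = 1$ then forces $\rank(\bmX_i) = 1$. The paper additionally invokes its Lemma~\ref{lem:orthrank1} to note that the resulting rank-one directions are mutually orthogonal, but that is a consequence of feasibility already observed after the definition of ROP and is not required by the definition itself, so your stopping point is fine.
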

The proof of this result is also in \cref{appendix:sdp_relaxation_pfs}, and it follows directly from complementary slackness. This key result, through careful analysis of the dual problem, will later allow us to characterize problem instances with ROP solutions, which by \cref{lem:projtight}, are optimal solutions to the nonconvex problem.
}

\section{Related work}
\label{s:related_work}

There are a few important related works on the objective in \cref{eq:sum_heterogeneous_quadratics}, as well as many more than can be reviewed here, including ones on eigenvalue/eigenvector problems and their variations, low-rank SDPs, and nonconvex quadratics where $\bmM_i$ are not PSD. For the curious reader, Section \ref{appendix:related} in the supplement provides a more extensive related work section. Here, we focus on the works most directly related to \cref{eq:sum_heterogeneous_quadratics}.

The papers \cite{bolla:98,rapcsak2002minimization,Berezovskyi2008} previously investigated the sum of heterogeneous quadratic forms in \cref{eq:sum_heterogeneous_quadratics}. The work in \cite{bolla:98} only studied the structure of this problem when the matrices $\bmM_i$ were commuting. The work in \cite{rapcsak2002minimization} derived sufficient second-order global optimality conditions, but these conditions are difficult to check in general and, for example, do not seem to hold for the heteroscedastic PCA problem. 
Works such as \cite{huang2009rank} and \cite{Pataki1998OnTR} consider a very similar problem to \eqref{eq:origobj}, but without the eigenvalue constraint in \eqref{eq:ncvxprimal}, making their SDP a rank-constrained separable SDP; see also \cite[Section 4.3]{luo2010sdp}. 
Pataki \cite{Pataki1998OnTR} studied upper bounds on the rank of optimal solutions of general SDPs, but in the case of \cref{eq:primal_problem}, since our problem introduces the additional constraint summing the $\bmX_i$, Pataki's bounds do not guarantee rank-one, or even low-rank, optimal solutions.

\visedit{A recent paper \cite{cifuentes2022stability} analyzes general sufficient conditions under which an SDP relaxation, which has a rank-one optimal solution, retains a rank-one optimal solution after the perturbation of the objective and/or constraint data. The analysis in \cite{cifuentes2022stability} does not seem to apply directly to our own work for two reasons: (i) the authors of \cite{cifuentes2022stability} analyze the basic Shor relaxation, 
a natural and popular SDP relaxation for quadratically constrained quadratic programs, which we show in \cref{appendix:shor_relax} is trivially not tight in our setting;
%i.e., the simplest SDP relaxation for a given quadratically constrained quadratic program, which we show does \textit{not} typically recover the solution on the Stiefel manifold to our problem (see \Cref{appendix:shor_relax} in the supplement); 
and (ii) their relaxation has a single-block matrix variable, which is analyzed to be rank-one at optimality, whereas we analyze several blocks $\X_1, \ldots, \X_k$, each of which is rank-one at optimality when the SDP is tight.}

Recent works have also studied convex relaxations of PCA and other low-rank subspace problems that seek to bound the eigenvalues of a single matrix  \cite{vu2013fantope,Morgenstern2019, won2021orthogonalSiamJMAA}, rather than the sum of multiple matrices as in our setting.  
The works in \cite{boumal_nips2016, Pumir2018SmoothedAO} {show that nonconvex Burer–Monteiro factorizations \cite{burer2003nonlinear}, which solve low-rank SDPs without orthogonality constraints, have no spurious local minima and that approximate second-order stationary points are approximate global optima.}
Other works have studied algorithms to optimize the nonconvex problem, like those in \cite{breloyMMStiefel2021,breloyRobustCovarianceHetero2016,SunBreloy2016,BreloyClutterSubspace2015,hong2021heppcat}, using minorize-maximize or Riemannian gradient ascent algorithms, which do not come with {global} optimality guarantees.
Our problem also has interesting connections to approximate joint diagonalization (AJD), which is well-studied and often applied to blind source separation or independent component analysis (ICA) problems \cite{theis_ica_jd, bouchard_malick_congedo2018, kleinsteuber_shen_2013, afsari_jd2008, Shi2011}. See \cref{appendix:related} of the supplement for further details.

% \newpage
\section{Theoretical Results}
\label{s:theory}

\subsection{Dual certificate of the SDP}
\label{s:theory:ss:dual_certificate}

In practical settings for high-dimensional data, a variety of iterative local methods are often applied to solve nonconvex problems over the Stiefel manifold, from gradient ascent by geodesics \cite{AbsMahSep2008,edelman1998geometry, abrudan2008steepestdescent} to majorization-minimization (MM) algorithms, where \cite{breloyMMStiefel2021} applied MM methods to solve \cref{eq:sum_heterogeneous_quadratics} with guarantees of convergence to a stationary point. While the computational complexity and memory requirements of these solvers scale well, their obtained solutions lack any global optimality guarantees. We seek to fill this gap by proposing a check for global optimality of a local solution.\footnote{\revise{To be clear, while our work does not guarantee that a local solution is globally optimal, we propose a certificate based on a sufficient condition to check if the local solution is globally optimal.}} \revise{Similar types of problems for running fast probabilistic algorithms and checking whether the candidate solution is the optimal solution to the convex relaxation also appear in \cite{bandeira2016note}.}

By \cref{lem:projtight}, an optimal solution of \cref{eq:primal_problem} with rank-one matrices $\bmX_i$ globally solves the original nonconvex problem \cref{eq:sum_heterogeneous_quadratics}. 
In this section, given a candidate $\Ub = [\ub_1 \cdots \ub_k] \in \text{St}(k,d)$ to \cref{eq:sum_heterogeneous_quadratics}, we investigate conditions guaranteeing that the rank-one matrices $\Xb_i = \ub_i \ub_i'$, which are feasible for \cref{eq:primal_problem}, in fact comprise an optimal solution of \cref{eq:primal_problem}, implying that $\Ub$ optimizes \cref{eq:sum_heterogeneous_quadratics}. Similar to \cite{won2022orthogonal,won2021orthogonalSiamJMAA,kyfan} for Fantope problems, our results yield a dual SDP certificate to verify the primal optimality of the candidates $\Xb_1, \hdots, \Xb_k$ constructed from a local solution $\Ub$. We show our certificate scales favorably in computation compared to the full SDP, with the most complicated computations of our algorithm requiring us to solve a feasibility problem in $k$ variables with several $d \times d$ linear matrix inequalities (LMI).

\begin{theorem}
\label{thm:dual_certificate}
\sloppypar{Let $\Ub \in \textup{St}(k,d)$, and let $\Lambdab = \sym{\sum_{i=1}^k \Ub'\bmM_i \Ub \E_i}$, where \visedit{$\sym{\bmA} := \frac{1}{2}(\bmA + \bmA')$,} $\E_i \triangleq \bme_i \bme_i'$ where $\bme_i$ is the $i^{\text{th}}$ standard basis vector in $\bbR^k$, {and $\bmM_i \succeq 0$ for all $i \in [k]$}. If there exist $\nub = [\nubi_1 \cdots \nubi_k] \in \bbR^k$ such that}
\begin{align}
\begin{split}
\label{eq:dual_certificate}
    \Ub(\Lambdab - \D_{\nub})\Ub' + \nubi_i \I - \bmM_i &\succeq 0 \quad \forall i=1,\hdots,k \\
    \Lambdab - \D_{\nub} &\succeq 0,
\end{split}
\end{align}
where $\D_{\nub} := \mathrm{diag}(\nubi_1,\hdots, \nubi_k)$, then $\Ub$ is a globally optimal solution to the original nonconvex problem \cref{eq:sum_heterogeneous_quadratics}. 
\end{theorem}

\noindent \revise{The proof, found in \cref{appendix:optimality_conditions},
uses the Karush-Kuhn-Tucker (KKT) conditions along with the conditions on $\nub$ to construct a dual certificate of SDP optimality. We note that \cref{thm:dual_certificate} is based on a strong sufficient condition, which in particular implies that any feasible $\Ub$ satisfying \cref{eq:dual_certificate} is a second-order stationary point.}

In light of \cref{thm:dual_certificate}, to test whether a candidate $\Ub$ is globally optimal, we simply assess whether system \cref{eq:dual_certificate} is feasible using an LMI solver. If it is indeed feasible, then $\Ub$ is globally optimal. On the other hand, if \cref{eq:dual_certificate} is infeasible,
it indicates one of two things: 1) The SDP is not tight, i.e., the SDP strictly upper bounds the original problem. The candidate $\Ub$ may or may not be globally optimal to the original nonconvex problem. 2) The SDP is tight, but the candidate $\Ub$ is a suboptimal local solution. 
Section \ref{appendix:sums_brocketts_linear_terms} also describes an extension of the certificate to the sum of Brocketts with additive linear terms. 

\revise{It is important to note that \cref{thm:dual_certificate} implies $\Ub$ is an \textit{exact} second-order stationary point. Since in practice it is not possible to obtain exact stationary points using numerical solvers, one may wonder if Theorem \ref{thm:dual_certificate} can be applied in practice.
However, given some $\Ub \in \mathrm{St}(k,d)$ obtained by a solver that only approximately satisfies dual feasibility, we can precisely characterize the suboptimality of this solution. To this end, we provide a corollary to \cref{thm:dual_certificate}, whose proof can be found in \cref{appendix:optimality_conditions}, where the semidefinite constraints are only approximately satisfied.
}

\begin{corollary}
\label{thm:dual_certificate_eps}
Let $\Ub \in \textup{St}(k,d)$ be a {feasible point} of \cref{eq:sum_heterogeneous_quadratics}, and let $\Lambdab = \sym{\sum_{i=1}^k \Ub'\bmM_i \Ub \E_i}.$
Let $\epsilon^*$ be the optimal value of
\begin{align}
\begin{split}
\label{eq:dual_certificate_eps}
    \min_{\epsilon \geq 0, ~\nub \in \bbR^k} ~ \epsilon \quad \mathrm{s.t.} \quad \Ub(\Lambdab - \D_{\nub})\Ub' + \nubi_i \I - \bmM_i &\succeq -\epsilon \I \quad \forall i=1,\hdots,k \\
    \Lambdab - \D_{\nub} &\succeq -\epsilon \I,
\end{split}
\end{align}
where $\D_{\nub} := \mathrm{diag}(\nubi_1,\hdots, \nubi_k)$. Then $\Ub$ is a near optimal solution to the original nonconvex problem \cref{eq:sum_heterogeneous_quadratics} in the sense that its objective value is bounded below by $p^* - \epsilon^* d$.
\end{corollary}

%\paragraph{Arithmetic complexity}

While SDP relaxations of nonconvex optimization problems can provide strong provable guarantees, their practicality can be limited by the time and space required to solve them, particularly when using off-the-shelf interior-point solvers, which in our case require $\mathcal{O}(d^3)$ \cite{benTal_aharon_nemirovski2001} storage and floating point operations (flops) per iteration of \cref{eq:dual_problem}.
The proposed global certificate in \cref{eq:dual_certificate} significantly reduces the number of variables from $\mathcal{O}(d^2)$ in \cref{eq:dual_problem} (upon eliminating the variables $\bmZ_i$) to merely $k$ variables in \cref{eq:dual_certificate}. Using \cite[Section 6.6.3]{benTal_aharon_nemirovski2001} it can be shown that computing the certificate only, based on a given $\Ub$, results in a substantial reduction in flops by a factor of $\mathcal{O}(d^3/k)$ over solving \cref{eq:dual_problem}. Subsequently, an MM solver with complexity on par with standard first-order based methods \cite{breloyMMStiefel2021}, whose cost is $\mathcal{O}(dk^2 + k^3)$ {per iteration}, combined with our global optimality certificate, is preferable to solving the full relaxation \cref{eq:primal_problem} for large problems. See \cref{appendix:complexity} for more details.

\subsection{SDP tightness in the close-to jointly diagonalizable (CJD) case}
\label{s:theory:ss:continuity}

While Section \ref{s:theory:ss:dual_certificate} provides a technique to certify the global optimality of a solution to the nonconvex problem, the check will fail if the point is not globally optimal or if the SDP is not tight. General conditions on $\bmM_i$ that guarantee tightness of \cref{eq:primal_problem} are still not known. However, when the matrices $\bmM_i$ are jointly diagonalizable, our problem reduces to a linear programming assignment problem \cite{bolla:98}, and by standard LP theory, a solution with rank-one $\X_i$ exists and the SDP (or equivalent LP) is a tight relaxation \cite{bolla:98}.

Our next major contribution is to show that a solution with rank-one $\X_i$ exists also for cases that are \emph{close-to jointly diagonalizable} (CJD).
We first give a continuity result showing there is a neighborhood around the diagonal case for which \cref{eq:primal_problem} is still tight. Then we show that for the HPPCA problem, the matrices $\bmM_i$ are close-to jointly diagonalizable and can be made arbitrarily close as the number of data points $n$ grows or as the noise levels diminish or become homoscedastic. This gives strong theoretical support for the tightness of the SDP for the HPPCA problem when $n$ is large or the noise levels are small or close in value.

\begin{definition}[Close-to jointly diagonalizable (CJD)]
We say that unit spectral-norm, symmetric matrices $\bmA$ and $\bmB$ are CJD if they are almost commuting, that is, when the commuting distance \edit{measured by some norm $\|\cdot\|$}, between $\bmA$ and  $\bmB$ is significantly less than 1: $$\|[\bmA, \bmB]\| := \|\bmA \bmB - \bmB \bmA\| \leq \delta \qquad \text{for some}~~0 < \delta \ll 1.$$
\end{definition}
The matrices $\bmA$ and $\bmB$ are jointly diagonalizable if and only if they commute, i.e., the commuting distance is zero.

\subsubsection{Continuity and tightness in the CJD case}

 In this section, we employ a technical continuity result for the dual feasible set to conclude that there is a neighborhood of problem instances around every diagonal instance for which \cref{eq:primal_problem} gives rank-one optimal solutions $\bmX_i$. All proofs for the results in this subsection are found in \cref{sec:continuity}.

Given a $k$-tuple of symmetric matrices $(\bmM_1,\ldots,\bmM_k)$, our
primal-dual pair is given by \cref{eq:primal_problem} and
\cref{eq:dual_problem}.
\noindent Note that, without loss of generality, we may assume each
$\bmM_i$ is positive semidefinite since the primal constraint $\tr(\bmX_i) =
1$ ensures that replacing $\bmM_i$ by $\bmM_i + \lambda_i \bmI \succeq 0$, where
$\lambda_i$ is a positive constant, simply shifts the objective value
by $\lambda_i$.
% \revise{In addition, the following is true and proved in \cref{sec:continuity}:
% \begin{lemma} \label{lem:nuneg}
% Assume all $\bmM_i$ are PSD, and $k < d$. Then all $\nu_i \geq 0$ at optimality.
% \end{lemma}}
Thus, we assume $\bmM_i
\succeq 0$ for all $i=1,\ldots,k$.

For a fixed, user-specified upper bound $\mu > 0$, we define the closed convex set
\[
    \Cc :=
    \{\bmc = (\bmM_1,\ldots,\bmM_k) : 0 \preceq \bmM_i \preceq \mu \bmI \quad \forall \ i = 1,\ldots,k \},
\]
to be our set of admissible coefficient $k$-tuples. We know that both \cref{eq:primal_problem} and \cref{eq:dual_problem} have interior
points for all $\bmc \in \Cc$, so that strong duality holds for all $\bmc \in \Cc$. \new{The following results draw upon the fact that \cref{eq:primal_problem} is equivalent to a linear program (LP) when $\bmM_1,\hdots,\bmM_k$ are jointly diagonalizable, i.e., the problem is a diagonal SDP. While we require the assumption that the equivalent LP in the jointly diagonalizable case has a unique optimal solution, we find this is a reasonable, mild assumption based on \cite[Theorem 4]{genericityResultsLP2017}, which proves the uniqueness property holds generically for LPs.}

\begin{lemma} \label{lem:gt}
Let $\bmc = (\bmM_1,\ldots,\bmM_k) \in \Cc$. If $\bmM_i$ are jointly diagonalizable for $i =
1,\ldots,k$ and \new{the associated LP for (\ref{eq:primal_problem}) has a unique optimal solution}, then there
exists an optimal solution of (\ref{eq:dual_problem}) with $\rank(\bmZ_i) \ge d-1$
for all $i = 1,\ldots,k$.
\end{lemma}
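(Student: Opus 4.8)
The plan is to exploit the fact that, when the $\M_i$ are jointly diagonalizable, the SDP \eqref{eq:primal_problem} reduces (in the common eigenbasis) to a linear assignment-type LP, so I can understand its optimal face explicitly and then lift back to a statement about the dual \eqref{eq:dual_problem}. First I would pick an orthogonal $\bmQ$ simultaneously diagonalizing all $\M_i$, write $\M_i = \bmQ \diag(\bm{m}_i) \bmQ'$, and observe that without loss of generality we may take $\bmQ = \I$ (everything is orthogonally equivariant). In that basis, restricting attention to the diagonals $\bm{x}_i = \diag(\bmX_i) \in \R^d$ gives a relaxed LP: maximize $\sum_i \langle \bm{m}_i, \bm{x}_i\rangle$ subject to $\sum_i \bm{x}_i \le \bm{1}$, $\bm{1}'\bm{x}_i = 1$, $\bm{x}_i \ge 0$ — this is exactly the kind of (fractional) assignment polytope whose vertices are $0/1$, hence rank-one feasible points of \eqref{eq:primal_problem}. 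I would invoke \cref{lem:projtight} together with the standard LP fact (already cited via \cite{bolla:98}) that the SDP value equals this LP value, so the SDP has a rank-one optimal solution; and since \eqref{eq:primal_problem} is assumed to have a \emph{unique} optimum, that unique optimum is itself a rank-one $0/1$ assignment.

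Next I would pass to the dual. By \cref{lem:strongdualityholds} strong duality holds, so the optimal primal $\bmX_i$ (rank-one, say $\bmX_i = \bme_{\sigma(i)}\bme_{\sigma(i)}'$ for an injection $\sigma$) and any optimal dual $(\bmY,\bmZ_i,\bmnu)$ satisfy complementary slackness: $\langle \bmX_i, \bmZ_i\rangle = 0$ for every $i$, and $\langle \I - \sum_i \bmX_i, \bmY\rangle = 0$. The first of these forces $\bmZ_i \bme_{\sigma(i)} = 0$, i.e. $\bmZ_i$ has a nontrivial kernel, so $\rank(\bmZ_i) \le d-1$ automatically; the content of the lemma is the \emph{lower} bound $\rank(\bmZ_i)\ge d-1$, equivalently that $\bmZ_i$ has exactly a one-dimensional kernel. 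To get this I would look at the stationarity relation $\bmY = \M_i + \bmZ_i - \nu_i \I$, which in the diagonal basis reads, entrywise on the diagonal, $Y_{jj} = (m_i)_j + (Z_i)_{jj} - \nu_i$ and, crucially, forces $\bmZ_i$ to be diagonal at optimality (one shows the optimal dual may be taken diagonal, or argues directly from KKT that off-diagonal entries of $\bmZ_i$ must vanish since $\bmY$ and $\M_i$ are diagonal). Then $(\bmZ_i)_{jj} = Y_{jj} - (m_i)_j + \nu_i \ge 0$ with equality exactly at $j = \sigma(i)$ (from complementary slackness), and the claim $\rank(\bmZ_i) = d-1$ becomes the statement that $(\bmZ_i)_{jj} > 0$ for all $j \ne \sigma(i)$ — i.e. that the LP optimum is "strict" in the relevant inequalities.

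The main obstacle, and where I expect the real work to be, is exactly that strictness: ruling out a ``degenerate'' dual optimum in which $(\bmZ_i)_{jj} = 0$ for some $j \ne \sigma(i)$. This is where the uniqueness hypothesis on the primal optimum must be used — degeneracy of the dual (a dual optimal face of dimension forcing some $(\bmZ_i)_{jj}=0$ off the support) is precisely what would permit alternative primal optima, contradicting uniqueness. Concretely I would argue the contrapositive: if some optimal $\bmZ_i$ had rank $\le d-2$, then there is a rank-two (or higher) direction in which one can perturb $\bmX_i$ while preserving feasibility, primal objective value, and complementary slackness, producing a second optimal primal solution — contradiction. I would make this precise by writing down the perturbation $\bmX_i \mapsto \bmX_i + t(\bme_j \bme_j' - \bme_{\sigma(i)}\bme_{\sigma(i)}')$ for a zero-diagonal-$\bmZ_i$ index $j$, simultaneously adjusting some $\bmX_{i'}$ to keep $\sum_i \bmX_i$ unchanged, and checking all KKT conditions still hold for small $t$. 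Packaging the LP-vertex structure, the reduction of the dual to diagonal form, and this perturbation argument is the crux; the orthogonal reduction and the complementary-slackness bookkeeping are routine.
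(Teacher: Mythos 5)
The overall reduction is right and matches the paper's strategy: pass to the common eigenbasis, observe that \eqref{eq:primal_problem} and \eqref{eq:dual_problem} become a fractional assignment LP and its dual, use uniqueness to pin down the primal optimum as a $0/1$ vertex, and then deduce the rank bound on $\bmZ_i$ from strict complementarity. The gap is in the final step, which you correctly flag as ``the crux.'' You propose to establish strict complementarity by contradiction: if some optimal $\bmZ_i$ had rank $\leq d-2$, perturb $\bmX_i$ in an extra null direction to manufacture a second primal optimum. That contrapositive is actually false. Uniqueness of the primal optimum does \emph{not} imply that \emph{every} dual optimum is strictly complementary --- only that \emph{some} dual optimum is. One can easily cook up LPs (e.g.\ add a degenerate equality constraint $x_j = 0$ that is never binding in the objective) where the primal vertex is unique yet the dual optimal face contains points with $s_j = 0$ off the support. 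In your perturbation, the move $\bmX_i \mapsto \bmX_i + t(\bme_j\bme_j' - \bme_{\sigma(i)}\bme_{\sigma(i)}')$ will in general violate $\sum_i \bmX_i \preceq \I$ when $j = \sigma(i')$ for some $i' \neq i$, and the compensating adjustment to $\bmX_{i'}$ requires that $\bmZ_{i'}$ also have a matching spare null direction; this augmenting-path chase need not close, so the argument does not go through.

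The paper avoids this trap entirely by invoking the Goldman--Tucker strict complementarity theorem for LP: there \emph{exists} an optimal primal-dual pair with $\diag(\bmX_i) + \diag(\bmZ_i) > 0$ entrywise for each $i$. Combined with the uniqueness of the primal vertex (a standard basis vector for each $i$), this immediately forces the corresponding diagonal $\bmZ_i$ to be positive on the remaining $d-1$ coordinates, giving $\rank(\bmZ_i) \geq d-1$. Since the lemma only asserts existence of such a dual solution, Goldman--Tucker is exactly the right tool, and it replaces the entire perturbation step you outlined. The rest of your proposal (orthogonal reduction, passage to diagonal dual variables, complementary-slackness bookkeeping) is consistent with the paper's proof.
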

\revise{
The result follows directly from the Goldman-Tucker theorem on strict complementarity for LPs.
}
\begin{definition} \label{def:c_distance}
    For $\bmc = (\bmM_1,\hdots,\bmM_k) \in \Cc$ and $\cb = (\Mb_1,\hdots, \Mb_k) \in \Cc$, define $\mathrm{dist}(\bmc,\cb) \triangleq \max_{i \in [k]} \|\bmM_i - \Mb_i\|_{\revise{\mathrm{tr}}}.$
\end{definition}

We are now ready to state our main result in this subsection.

\begin{theorem} \label{thm:main_continuity}
\sloppypar{Let $\cb := (\Mb_1, \ldots, \Mb_k) \in \Cc$ be given such that
$\Mb_i$, $i=1,\ldots,k$, are jointly diagonalizable and \new{the associated LP, which is derived from the diagonal SDP of \cref{eq:primal_problem}} with objective coefficients $\cb$, has a unique optimal
solution. Then there exists a full-dimensional neighborhood $\overline \Cc \ni \cb$
in $\Cc$ such that (\ref{eq:primal_problem}) has the rank-one property for all $\bmc =
(\bmM_1,\ldots,\bmM_k) \in \overline \Cc$.}
\end{theorem}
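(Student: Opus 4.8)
The plan is to show that, for every $\bmc$ close to $\bar\bmc$, a nearby local maximizer of \cref{eq:sum_heterogeneous_quadratics} satisfies the dual certificate of \cref{thm:dual_certificate} and is therefore globally optimal, so that \cref{eq:primal_problem} attains its optimum at a rank-one point. First I would make the standing reductions: a shift $\M_i\mapsto\M_i+\lambda_i\I$ puts all $\bar\M_i$ in $\mathbb{S}^d_+$, and a common orthogonal congruence $\M_i\mapsto\Q'\M_i\Q$, $\bmU\mapsto\Q'\bmU$ — an isometry of $\Cc$ that preserves optimality and the rank-one property — diagonalizes the $\bar\M_i$. In this basis \cref{eq:primal_problem} at $\bar\bmc$ reduces, after replacing each $\bmX_i$ by its diagonal part (which stays feasible with the same objective), to a partial-assignment LP; by the uniqueness hypothesis its optimal face is a single vertex, necessarily $0/1$-valued, so the unique primal optimizer is $\Xb_i=\ub_i\ub_i'$ with $\Ub=[\bme_{\pi(1)}\ \cdots\ \bme_{\pi(k)}]$ for a strictly optimal injection $\pi\colon[k]\to[d]$. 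By \cref{lem:strongdualityholds} and \cref{lem:gt} there is a dual optimum $(\Yb,\Zb_i,\nub)$ with $\rank(\Zb_i)\ge d-1$ (and $\nubi_i\ge0$ by \cref{lem:nuneg}); complementary slackness gives $\Xb_i\Zb_i=0$, and $\tr(\Xb_i)=1$ forces $\Xb_i\neq0$, so in fact $\rank(\Zb_i)=d-1$ — i.e. $\Zb_i$ is PSD with a \emph{simple} zero eigenvalue. A short computation then identifies $\Zb_i$ with the first LMI matrix $\Ub(\Lambdab-\D_{\nub})\Ub'+\nubi_i\I-\M_i$ of \cref{eq:dual_certificate} evaluated at $\Ub$, and identifies $\Lambdab-\D_{\nub}$ with $\diag(\ub_i'\M_i\ub_i-\nubi_i)\succeq0$; hence \cref{eq:dual_certificate} holds at $\Ub$, as \cref{thm:dual_certificate} says it must.

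The heart of the proof is to propagate the certificate to nearby $\bmc$. Write $f_\bmc(\bmU):=\sum_i\bmu_i'\M_i\bmu_i$. Since the SDP optimum is rank-one, \cref{lem:projtight} equates the optimal values of \cref{eq:primal_problem} and \cref{eq:sum_heterogeneous_quadratics}, so at $\bmc=\bar\bmc$ the maximizers of $f_{\bar\bmc}$ over $\text{St}(k,d)$ are exactly the $2^k$ sign flips of $\Ub$; in particular $\Ub$ is the strict maximizer of $f_{\bar\bmc}$ over a small closed ball $\bar B=\{\bmU\in\text{St}(k,d):\|\bmU-\Ub\|_F\le\delta\}$, so $f_{\bar\bmc}(\Ub)>\max_{\partial\bar B}f_{\bar\bmc}$. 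Because $f_\bmc\to f_{\bar\bmc}$ uniformly on $\bar B$, for every $\bmc$ in a neighborhood of $\bar\bmc$ the maximum of $f_\bmc$ on $\bar B$ is interior, so there is a local maximizer $\bmU=\bmU(\bmc)$ of \cref{eq:sum_heterogeneous_quadratics} with $\bmU(\bmc)\to\Ub$. The algebraic key is that at \emph{any} critical point $\bmU$ of \cref{eq:sum_heterogeneous_quadratics} and for \emph{any} $\nub\in\mathbb{R}^k_+$, the first LMI matrix in \cref{eq:dual_certificate} — call it $\N_i$, now formed from $\bmU$ in place of $\Ub$ — satisfies $\N_i\bmu_i=(\bmU\bmU'-\I)\M_i\bmu_i=0$, since criticality on $\text{St}(k,d)$ forces $\M_i\bmu_i\in\text{range}(\bmU)$; thus $\bmu_i$ always lies in $\ker\N_i$. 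Applying \cref{thm:dual_certificate} at $\bmU(\bmc)$ with $\nub$ as above, the matrices $\N_i$ converge as $\bmc\to\bar\bmc$ to $\Zb_i$, whose nonzero spectrum is bounded below by some $\varepsilon>0$ uniformly in $i$; by Weyl's inequality and the kernel identity, for $\bmc$ near $\bar\bmc$ each $\N_i$ has smallest eigenvalue exactly $0$ and next eigenvalue $\ge\varepsilon/2$, hence $\N_i\succeq0$, while the second LMI matrix converges to $\diag(\ub_i'\M_i\ub_i-\nubi_i)$, which stays in the PSD cone provided this limit is \emph{strictly} positive definite — the point addressed below. So \cref{eq:dual_certificate} is feasible at $\bmU(\bmc)$; \cref{thm:dual_certificate} then makes $\bmU(\bmc)$ a global maximizer and $\bmu_i(\bmc)\bmu_i(\bmc)'$ a rank-one optimal solution of \cref{eq:primal_problem}. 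Taking $\bar\Cc$ to be the intersection with $\Cc$ of this neighborhood — relatively open and containing $\bar\bmc$, hence full-dimensional in $\Cc$ — finishes the argument.

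The main obstacle I anticipate is exactly the claim used above that the certificate at $\bar\bmc$ holds with enough slack to survive perturbation — concretely, that the dual optimum can be chosen so that $\Yb$ has rank exactly $k$, equivalently $\Lambdab-\D_{\nub}\succ0$, i.e. $\ub_i'\M_i\ub_i>\nubi_i$ for every $i$. The spectral gap of $\Zb_i$ is automatic (rank $d-1$ gives a simple zero eigenvalue), but if the difference constraints among the $\nubi_j$ in \cref{eq:dual_problem} force $\nubi_{i_0}=\ub_{i_0}'\M_{i_0}\ub_{i_0}$ for some $i_0$, a perturbation could push $\Lambdab-\D_{\nub}$ out of the PSD cone, since $\bmu_{i_0}(\bmc)'\M_{i_0}\bmu_{i_0}(\bmc)-\nubi_{i_0}$ need not remain nonnegative. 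I would rule this out from the strict optimality of $\pi$, which is precisely what the uniqueness hypothesis provides: strict optimality makes each ``swap'' inequality among the diagonal entries of the $\bar\M_i$ strict, which makes the polyhedron of admissible $\nub$ strictly feasible with respect to all the relevant soft constraints, so a suitable $\nub$ (possibly after a small simultaneous decrease of several $\nubi_j$) yields $\Lambdab-\D_{\nub}\succ0$ while keeping every $\N_i\succeq0$ with a simple zero eigenvalue. Packaging this as a strict-feasibility/continuity statement for the dual side near a diagonal instance is the one genuinely delicate ingredient; the remaining perturbation and eigenvalue bookkeeping is routine.
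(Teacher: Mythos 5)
Your proof is essentially correct, but it takes a genuinely different route from the paper's. The paper proves continuity of the \emph{dual} optimal-solution map $\bmc\mapsto y(\bmc;\bmy^0)$ for a general parametric conic program (Lemmas~\ref{lem:dualsetconv}, \ref{lem:optsetconv}, Prop.~\ref{pro:ycont}), observes via \cref{lem:gt} that the condition $\rank(\bmZ_i)\ge d-1$ is satisfied at $\bar\bmc$, and concludes since this rank condition is open; \cref{lem:Zrank_orthoXi} then converts $\rank(\bmZ_i)=d-1$ to rank-one primal optima. Your argument instead tracks a \emph{primal} local maximizer $\bmU(\bmc)\to\Ub$ on the Stiefel manifold, fixes $\bar\bmnu$, and verifies \cref{eq:dual_certificate} at $\bmU(\bmc)$ by Weyl perturbation plus the key observation that criticality forces $\N_i\bmu_i=(\bmU\bmU'-\I)\M_i\bmu_i=0$ for \emph{any} $\bmnu$, so the smallest eigenvalue of $\N_i$ is pinned to zero. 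This is a nice, more explicit argument, at the cost of needing to establish existence and convergence of nearby local maximizers, which the paper's abstract route avoids.

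The one thing you misjudge is your ``main obstacle.'' You flag as delicate that $\Lambdab-\D_{\bar\bmnu}\succ0$, i.e.\ that $\Yb$ has rank exactly $k$. But this comes for free from exactly the same Goldman--Tucker strict-complementarity argument that gives $\rank(\Zb_i)=d-1$: strict complementarity of the assignment LP (which exists, and whose primal side must be the unique optimal $\Xb_i$ by hypothesis) yields not only $\diag(\Xb_i)+\diag(\Zb_i)>0$ but also $\diag(\I-\sum_i\Xb_i)+\diag(\Yb)>0$, so $\bar Y_{jj}>0$ for exactly the $k$ assigned indices $j=\pi(i)$ and $\bar Y_{jj}=0$ otherwise; hence $\rank(\Yb)=k$, equivalently $\Lambdab-\D_{\bar\bmnu}=\diag\bigl(\ub_i'\bar\M_i\ub_i-\bar\nu_i\bigr)\succ0$. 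There is no separate ``strict-feasibility/continuity'' lemma to prove here — the slack you need is built into \cref{lem:gt}'s proof. With that observation your proof closes cleanly; the remaining details (interior maximizer via uniform convergence, eigenvalue pinning via $\N_i\bmu_i=0$ plus Weyl) are, as you say, routine.
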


\begin{proof}[Proof]
Using Lemma \ref{lem:gt}, let $\bmy^0 := (\Yb, \Zb_i, \nubi_i)$
be the optimal solution of the dual problem (\ref{eq:dual_problem})
for $\cb = (\Mb_1,\ldots,\Mb_k)$, which has $\rank(\Zb_i)
\ge d-1$ for all $i$. Proposition \ref{pro:ycont} in Appendix~\ref{sec:continuity} considers a function $y(\bmc; \bmy^0)$ that returns the optimal solution of (\ref{eq:dual_problem}) for $\bmc
= (\bmM_1,\ldots,\bmM_k)$ closest to $\bmy^0$, and shows that this function is continuous. It follows that its preimage
\[
    y^{-1}\left( \{ (\bmY, \bmZ_i, \nu_i) : \rank(\bmZ_i) \ge d-1 \ \ \forall \ i \} \right)
\]
contains $\cb$ and is an open set because the set
of all $(\bmY, \bmZ_i, \nu_i)$ with $\rank(\bmZ_i) \ge d-1$ is
an open set. After intersecting with $\Cc$, we have shown existence of this full-dimensional
set $\overline\Cc$.
From complementarity of the KKT conditions of the assignment LP, $\rank(\bmZ_i) = d-1$ for $i=1,\hdots,k$. Applying \cref{lem:Zrank_orthoXi} then completes the theorem.
\end{proof}

The next corollary shows that for a general tuple of matrices $\bmc := (\bmM_1,\hdots, \bmM_k)$ that are \edit{pairwise} CJD \new{for small enough $\delta$}, \cref{eq:primal_problem} is tight and has the rank-one property. In the following results, we will then prove the HPPCA generative model results in $(\bmM_1, \ldots, \bmM_k)$ being CJD. While these are sufficient conditions, they are by no means necessary, and \Cref{appendix:example_not_almost_commuting} in the supplement gives an example of $\bmM_i$ that are \textit{not} CJD but where the convex relaxation has the rank-one property. \new{It is important to note the results do not quantify an exact $\delta$ for \cref{eq:primal_problem} to achieve the ROP, but only the existence of one.}

\begin{corollary}
\label{cor:Mi_commute:general}
\revise{\sloppypar{Let $\epsilon > 0$, and $\bmc := (\bmM_1,\hdots, \bmM_k)$ be a tuple of self-adjoint matrices, where $\|[\bmM_i, \bmM_j]\|_{\mathrm{tr}} := \|\bmM_i \bmM_j - \bmM_j \bmM_i\|_{\mathrm{tr}} \leq \epsilon$ for all $i,j \in [k]$, and assume $\|\bmM_i\| \leq 1$ for all $i \in [k]$. \edit{Then there exists a tuple of commuting self-adjoint matrices $\cb := (\Mb_1, \hdots, \Mb_k)$ with $[\Mb_i, \Mb_j] = 0$ for all $i, j \in [k]$ and a $\delta(\epsilon,k) > 0$ such that $\mathrm{dist}(\bmc,\cb)  \leq \delta(\epsilon,k)$ and $\delta(\epsilon,k)$ is a function satisfying $\lim_{\epsilon \rightarrow 0} \delta(\epsilon,k) = 0$.} Assume the associated LP, which is derived from the diagonal SDP of \cref{eq:primal_problem} and is parameterized by $\bar{\mathbf{c}}$, has a unique optimal solution.}}
    
\revise{If $\epsilon > 0$ is such that $\text{dist}(\bmc, \cb) \leq \delta(\epsilon,k)$ implies $\bmc \in \overline \Cc$, \edit{where $\overline \Cc$ is given by Theorem \ref{thm:main_continuity},} (SDP-P) parameterized by $\mathbf{c}$ has the rank-one property.}
    
\end{corollary}
\revise{
\begin{proof}
    The result follows from directly applying the extension of Lin's Theorem for a tuple of $k \geq 3$ matrices \cite[Theorem 3]{Filonov2010AHA} (see \cref{lem:hilbert_schmidt_lins_thm} in the supplement) to $(\bmM_1,\hdots,\M_k)$.
\end{proof}}

\revise{The next corollary gives a similar result, but tailored specifically to problem \cref{eq:origobj}.}

\begin{corollary}
\label{cor:Mi_commute:shq}
    \revise{\sloppypar{Let $\epsilon > 0$, and define $\bmc := (\bmM_1,\hdots, \bmM_k)$ for \cref{eq:origobj}, where $\|[\bmA_i, \bmA_j]\|_{\mathrm{tr}} \leq \epsilon$ for all $i,j \in [L]$, and assume $\|\bmA_i\| \leq 1$ for all $i \in [k]$.
    \edit{Then there exists a tuple of commuting self-adjoint matrices $\cb := (\Mb_1, \hdots, \Mb_k)$ with $[\Mb_i, \Mb_j] = 0$ for all $i, j \in [k]$ and a $\delta(\epsilon,k) > 0$ such that $\mathrm{dist}(\bmc,\cb)  \leq \delta(\epsilon,k)\sum_{\ell=1}^L \max_{i \in [k]} w_{\ell,i} $.}}}
\end{corollary}

\subsubsection{HPPCA possesses the CJD property}

Consider the heteroscedastic probabilistic PCA problem in \cite{hong2021heppcat} where $L$ data groups of $n_1,\hdots,n_L$ samples ($n = \sum_{\ell=1}^L n_\ell$) with known noise variances $v_1,\hdots,v_L$ respectively are generated by the model
    \begin{align}
    \label{eq:hppca:generative_model}
        \bmy_{\ell,j} = \bmU \bmTheta \bmz_{\ell,j} + \bmeta_{\ell,j} \in \bbR^d \quad \forall \ell \in [L], j \in [n_\ell].
    \end{align}
    Here, $\bmU \in \text{St}(k,d)$ is a planted subspace, $\bmTheta = \text{diag}(\sqrt{\lambda}_1,\hdots,\sqrt{\lambda}_k)$ represent the known signal amplitudes, $\bmz_{\ell,j} \overset{\mathrm{iid}}{\sim} \mathcal{N}(\bm{0},\I_k)$ are latent variables, and $\bmeta_{\ell,j} \overset{\mathrm{iid}}{\sim} \mathcal{N}(\bm{0}, v_\ell \I_d)$ are additive Gaussian heteroscedastic noises. Assume that $\lambda_i \neq \lambda_j$ for $i \neq j \in [k]$ and $v_\ell \neq v_m$ for $\ell \neq m \in [L]$. The maximum likelihood problem in \cite[Equation 3]{hong2021heppcat} with respect to $\bmU$ is then equivalently \cref{eq:origobj} for $\A_\ell = \sum_{j=1}^{n_\ell} \frac{1}{v_\ell}\bmy_{\ell,j} \bmy_{\ell,j}'$ and $w_{\ell,i} = \frac{\lambda_i}{\lambda_i + v_\ell} \in (0,1]$. Our next result says that, as the number of samples $n$ grows, the signal-to-noise ratio $\lambda_i/v_\ell$ grows, or the variances are close to the median noise variance, the matrices in the HPPCA problem are almost commuting \edit{under the spectral norm}. The proof is found in \cref{sec:continuity}.
   \begin{proposition}
   \label{prop:hppca:almost_commute}
Let $\bmc = (\frac{1}{n}\bmM_1,\hdots,\frac{1}{n}\bmM_k)$ be the (normalized) data matrices of the HPPCA problem. Then there exists commuting $\cb = (\Mb_1,\hdots,\Mb_k)$ (constructed in the proof) \edit{and a universal constant $C > 0$} such that for any \new{$\bar{v} \geq 0$} \edit{and any $t>0$}, with probability exceeding $1 - e^{-t}$,
        \begin{align}
        \label{eq:prop4.9:bound}
             \frac{\|\textcolor{black}{\frac{1}{n}}\bmM_i - \Mb_i \|}{\|\Mb_1\|} \leq \min\left\{\sum_{\ell=1}^L \frac{\gamma_\ell(\bar{v})}{\frac{\lambda_i}{v_\ell} + 1}, C \frac{\bar {\sigma}_i}{\bar{\sigma}_1} \max\left\{\sqrt{\frac{\frac{\bar{\xi}_i}{\bar{\sigma}_i} \log d  + t}{n}}, \frac{\frac{\bar{\xi}_i}{\bar{\sigma}_i}\log d + t}{n}\log(n) \right \}\right\},
        \end{align}
        where
        \begin{align*}
            \gamma_\ell(\bar{v}) := \left|\frac{\bar{v}}{v_\ell} - 1\right|, \qquad \bar{\sigma}_i &:= \|\Mb_i \| = \sum_{\ell=1}^L \frac{\frac{\lambda_i}{v_\ell}}{\frac{\lambda_i}{v_\ell} + 1} \frac{n_\ell}{n} \left(\frac{\lambda_1}{v_\ell} + 1\right),\\
            \bar{\xi}_i &:= \tr(\Mb_i) = \sum_{\ell=1}^L \frac{\frac{\lambda_i}{v_\ell}}{\frac{\lambda_i}{v_\ell} + 1}\frac{n_\ell}{n} \left(\frac{1}{v_\ell} \sum_{j=1}^k \lambda_j + d\right). \nonumber
        \end{align*}
   \end{proposition}
   \begin{remark}
    It seems natural to let $\bar{v} = v_\mathrm{med} = \min_{v} \sum_{\ell=1}^L |v - v_\ell|$, i.e., the median noise variance, which provides an upper bound for \cref{eq:prop4.9:bound}, i.e., \visedit{$$\min_{\bar{v}} \sum_{\ell=1}^L \frac{{\gamma_\ell(\bar{v})}}{\frac{\lambda_i}{v_\ell} + 1} = \min_{\bar{v}} \sum_{\ell=1}^L \frac{{|\bar{v}- v_\ell|}}{\lambda_i + v_\ell} \leq \sum_{\ell=1}^L \frac{{|v_\mathrm{med} - v_\ell|}}{\lambda_i + v_\ell}.$$}
\end{remark}
\visedit{The proof of \cref{prop:hppca:almost_commute} in \cref{sec:continuity} analyzes two cases separately, obtaining bounds for the normalized distance under the spectral norm between each $\M_i$ and $\Mb_i$. The final result in \cref{eq:prop4.9:bound} then takes the minimum of the two bounds.}
%\visedit{Notably, the bound in \cref{eq:prop4.9:bound} depends on several key quantities.}
%
\visedit{The left argument of the minimum operator in \cref{eq:prop4.9:bound} reflects the effect of the heterogeneous noise. As all of the variances become close in value to some $\bar{v} \geq 0$, the matrices $\M_i$ become almost commuting, eventually becoming equal when all the variances are equal, i.e., the noise is homogeneous. In addition, the distance depends on the inverse signal-to-noise ratios between the eigenvalues $\lambda_i$ and variances $v_\ell$, so as the noise diminishes, the matrices $\M_i$ also become almost commuting.}

\visedit{The right argument of the minimum operator captures the effects of growing dimension, rank, and sample size using the concentration of the sample covariance matrices for Gaussian random variables \cite{Lounici2014}.
First, the normalized distance between each $\M_i$ and $\Mb_i$ grows as $\mathcal{O}(d \log d)$ and linearly with $\sum_{i=1}^k \lambda_i$ (which is related to the rank), as reflected by the terms $\bar{\xi}_i$.
Lastly, the bound diminishes as $\mathcal{O}(1/\sqrt{n})$, where $n$ is the total number of data samples; as the sample size grows, the matrices become almost commuting. 
}

\section{Numerical experiments}
\label{s:experiments}

All numerical experiments were computed using MATLAB R2018a on a MacBook Pro with a 2.6 GHz 6-Core Intel Core i7 processor. When solving SDPs, we use the SDPT3 solver of the CVX package in MATLAB \cite{cvx}. All code necessary to reproduce our experiments is available at \texttt{https://github.com/kgilman/Sums-of-Heterogeneous-Quadratics}. When executing each algorithm in practice, we remark that the results may vary with the choice of user specified numerical tolerances and other settings. \revise{Since \cref{thm:dual_certificate} requires an exact stationary point, and in practice, an iterative solver only returns an inexact stationary point, the KKT conditions may not be exactly satisfied. However, in practice, we found using smaller numerical precisions in the SDP and iterative solvers is often sufficient to achieve a \visedit{numerical} certificate, albeit inexact. When computing a first-order stationary point with an iterative solver, we terminate the algorithm when the norm of the gradient on the manifold is less than $10^{-10}$.} We point the reader to \cref{appendix:experiments} for further detailed settings.

\subsection{Assessing the rank-one property (ROP)}

% \vspace{-1mm}
In this subsection, we conduct experiments showing that, for many random instances of the HPPCA application, the SDP relaxation \cref{eq:primal_problem} is tight with optimal rank-one $\bmX_i$, yielding a globally optimal solution of \cref{eq:sum_heterogeneous_quadratics}. Similar experiments for other forms of \cref{eq:sum_heterogeneous_quadratics}, including where $\bmM_i$ are random low-rank PSD matrices, are found in \cref{appendix:experiments} and give similar insights. Here, the $\bmM_i$ were generated according to our HPPCA model in \eqref{eq:hppca:generative_model} where $\A_\ell$ = $\frac{1}{v_\ell} \sum_{i=1}^{n_\ell} \bmy_{\ell,i} \bmy_{\ell,i}'$ and weight matrices $\bmW_\ell$ are calculated as $\bmW_\ell = \text{diag}(w_{\ell,1},\hdots,w_{\ell,k})$ for $w_{\ell,i} = \lambda_i / (\lambda_i + v_\ell)$. We made $\bmlambda$ a $k$-length vector of entries uniformly spaced in the interval $[1,4]$, and we varied the ambient dimension $d$, rank $k$, samples \edit{$\bmn:=[n_1,\hdots,n_L]$}, and variances $\bmv$ for both $L=2$ and $L=3$ noise groups. Each random instance was generated from a new random draw of $\bmU$ on the Stiefel manifold, latent variables $\bmz_{\ell,i}$, and noise vectors $\bmeta_{\ell,i}$.

\revise{Tables \ref{tbl:trial_counts_hppca:a}, \ref{tbl:trial_counts:hppca:b}, and \ref{tbl:trial_counts:hppca:leq3} show the results of these experiments for various choices of dimension $d$, rank $k$, samples $\bmn$, and variances $\bmv$.} We solved the SDP for 100 random data instances for $d\leq 50$ and 20 random data instances for $d\geq 100$. The table shows the fraction of trials that resulted in rank-one $\bmX_i$ for all $i=1,\hdots,k$. We computed the average error of the sorted eigenvalues of each optimal solution $\Xb_i$ to $\bme_1$, i.e. $\frac{1}{k}\sum_{i=1}^k \|\diag(\bmSigma_i) - \bme_1\|_2^2$ where $\Xb_i = \bmV_i \bmSigma_i \bmV_i'$, and counted any trial with error greater than $10^{-5}$ as not tight. 

The SDP solutions possessed the ROP in the vast majority of trials. 
As we increased the total number of samples $n$ in Tables \ref{tbl:trial_counts_hppca:a} and \ref{tbl:trial_counts:hppca:leq3}, the convex relaxation became tight in 100\% of the trials, as predicted by the commuting error bound dependency on $\mathcal{O}(1/\sqrt{n})$ in \cref{prop:hppca:almost_commute}.
As $d$ or $k$ increased, we generally observed a few instances where the SDP was not tight, which conforms with the theory in \cref{prop:hppca:almost_commute}. 
% In the worst case, the normalized distance between each $\M_i$ and $\Mb_i$ grows by $\mathcal{O}(d \log d)$.
% Likewise, the distance also depends on the rank of the generative model, $k$, and is proportional to $\sum_{i=1}^k \lambda_i$.
As we decreased the spread of the variances, \cref{tbl:trial_counts:hppca:b} shows the fraction of tight instances increased, reaching 100\% in the homoscedastic setting, as expected \edit{because then all of the $\M_i$ are equal.} 
\revise{Likewise, \cref{tbl:trial_counts:hppca:leq3} shows this behavior for the $L=3$ case.}

\begin{table}[htbp]
\parbox{.48\linewidth}{
\centering
\resizebox{0.48\textwidth}{!}{
\begin{tabular}{|c|c|c|c|c|c|}
    \hline
    \multicolumn{2}{|c|}{} &\multicolumn{4}{|c|}{\textbf{Fraction of 100 trials with ROP}} \\
     \cline{3-6}
     \multicolumn{2}{|c|}{$\bmv = [1,4]$}  & $k=3$ & $k=5$ & $k=7$ & $k=10$ \\\hline
  \multirow{5}{*}{\rotatebox{90}{\textbf{$\bmn = [5,20]$}}}  
 
    &$d=10$&   1  &  0.99   &    1   &    1\\\cline{2-6} 
    &$d=20$&   1  &  0.98  &  0.98 &   0.99\\\cline{2-6} 
    &$d=30$&    0.99  &  0.93  &  0.98  &  0.97\\\cline{2-6} 
    &$d=40$&    0.98  &  0.91  &  0.99 &   0.98\\\cline{2-6} 
    &$d=50$&    0.97 &   0.95 &   0.96  &  0.98\\\cline{2-6} 
    \hline \hline
   
   \multirow{5}{*}{\rotatebox{90}{\textbf{$\bmn = [20,80]$}}}   &$d=10$&    1    &    1    &   1    &   1\\\cline{2-6} 
    &$d=20$&    1    &    1    &   1   &    1\\\cline{2-6} 
    &$d=30$&    1   &     1    &   1  &  0.98\\\cline{2-6} 
    &$d=40$&    1    &    1  &  0.97  &  0.95\\\cline{2-6} 
    &$d=50$&    1  &   0.98  &  0.98  &  0.97\\\cline{2-6} 
   \hline \hline
   
   \multirow{5}{*}{\rotatebox{90}{\scriptsize{\textbf{$ \bmn =[100,400]$}}}}  &$d=10$&    1  &   1  &   1   &   1 \\\cline{2-6}
    &$d=20$&    1  &   1  &   1   &   1 \\\cline{2-6}
    &$d=30$&    1  &   1  &   1   &   1 \\\cline{2-6}
    &$d=40$&    1  &   1  &   1   &   1 \\\cline{2-6}
    &$d=50$&    1  &   1  &   1   &   1 \\\cline{2-6}
   \hline
   
%   \multirow{5}{*}{\rotatebox{90}{\scriptsize{}{\textbf{$n = [200,800]$}}}}  &$d=10$&    1  &   1  &   1   &   1 \\\cline{2-6}
%     &$d=20$&    1  &   1  &   1   &   1 \\\cline{2-6}
%     &$d=30$&    1  &   1  &   1   &   1 \\\cline{2-6}
%     &$d=40$&    1  &   1  &   1   &   1 \\\cline{2-6}
%     &$d=50$&    1  &   1  &   1   &   1 \\\cline{2-6}
%   \hline \hline
   
  \end{tabular}
  }

\caption{ Numerical experiments showing the fraction of trials where the SDP was tight for instances of the HPPCA problem as we varied $d$, $k$, and $\bmn$ using $L=2$ groups with noise variances $\bmv = [1,4]$.}
  \label{tbl:trial_counts_hppca:a}
}
% \hfill{3mm}
\hspace{3mm}
\parbox{.48\linewidth}{
\centering
\resizebox{0.48\textwidth}{!}{
\begin{tabular}{|c|c|c|c|c|c|}
    \hline
    \multicolumn{2}{|c|}{} &\multicolumn{4}{|c|}{\textbf{Fraction of 100 trials with ROP}} \\
     \cline{3-6}
     \multicolumn{2}{|c|}{$\bmn = [10,40]$}  & $k=3$ & $k=5$ & $k=7$ & $k=10$ \\\hline
  \multirow{5}{*}{\rotatebox{90}{\textbf{$\bmv = [1,1]$}}}  
 
   &$d=10$&    1  &   1   &     1   &  1 \\\cline{2-6} 
    &$d=20$&    1  &   1   &     1   &  1 \\\cline{2-6} 
    &$d=30$&    1  &   1   &     1   &  1 \\\cline{2-6} 
    &$d=40$&    1  &   1   &  1   &  1 \\\cline{2-6} 
    &$d=50$&    1  &   1   &  1   &  1 \\\cline{2-6} 
   \hline  \hline
   
   \multirow{5}{*}{\rotatebox{90}{\textbf{$\bmv = [1,2]$}}}
   &$d=10$&    1  &   1   &     1   &  1 \\\cline{2-6} 
    &$d=20$&    1  &   1   &     1   &  1 \\\cline{2-6} 
    &$d=30$&    1  &   0.98   &     1   &  1 \\\cline{2-6} 
    &$d=40$&    1  &   1   &  0.99   &  1 \\\cline{2-6} 
    &$d=50$&    1  &   1   &  1   &  0.99 \\\cline{2-6} 
   \hline  \hline
   
   \multirow{5}{*}{\rotatebox{90}{\textbf{$\bmv = [1,3]$}}}   &$d=10$&    1  &   1   &     1   &  1 \\\cline{2-6} 
    &$d=20$&    1  &   1   &     1   &  1 \\\cline{2-6} 
    &$d=30$&    0.99  &   0.99   &    0.97   &  0.99 \\\cline{2-6} 
    &$d=40$&    1  &   0.98   &  0.97   &  0.99 \\\cline{2-6} 
    &$d=50$&    1  &   0.97   &  0.96   &  0.98 \\\cline{2-6} 
   \hline
   
%   \multirow{5}{*}{\rotatebox{90}{\scriptsize{}{\textbf{$n = [200,800]$}}}}  &$d=10$&    1  &   1  &   1   &   1 \\\cline{2-6}
%     &$d=20$&    1  &   1  &   1   &   1 \\\cline{2-6}
%     &$d=30$&    1  &   1  &   1   &   1 \\\cline{2-6}
%     &$d=40$&    1  &   1  &   1   &   1 \\\cline{2-6}
%     &$d=50$&    1  &   1  &   1   &   1 \\\cline{2-6}
%   \hline \hline
   
  \end{tabular}
  }
    \\
  \vspace{2mm}
  % \parbox{.45\linewidth}{
% \centering
\resizebox{0.48\textwidth}{!}{
\revise{
\begin{tabular}{|c|c|c|c|}
    \hline
    \multicolumn{2}{|c|}{} &\multicolumn{2}{|c|}{\textbf{Fraction of 20 trials with ROP}} \\
     \cline{3-4}
     \multicolumn{2}{|c|}{$\bmv = [1,3]$}  & $k=5$ & $k=10$ \\\hline
  \multirow{3}{*}{\rotatebox{0}{\textbf{$\bmn = [10,40]$}}}  
 
    &$d=100$&   1  &  0.85  \\\cline{2-4} 
    &$d=200$&   0.95  &  0.35\\\cline{2-4}
    &$d=300$& 0.75 & 0.35 \\\cline{2-4}
    \hline \hline
   
   \multirow{3}{*}{\rotatebox{0}{\textbf{$\bmn = [50,200]$}}} 
    &$d=100$&   1  &   0.95\\\cline{2-4} 
    &$d=200$&   1   &   0.8\\\cline{2-4}  
    &$d=300$ & 1 & 0.85 \\\cline{2-4}
   \hline
  \end{tabular}
  }
% }
}
\caption{ Numerical experiments showing the fractions of trials where the SDP was tight for instances of the HPPCA problem as we varied $d$, $k$, and $\bmv$ using $L=2$ groups with samples $\bmn = [10,40]$ and $\bmn = [50,200]$. Due to the large computation time of solving the full SDP for larger values of $d \geq 100$, we only ran 20 independent trials for each experiment setting.}
  \label{tbl:trial_counts:hppca:b}
}
\end{table}

\begin{table}[htpb]
\centering
\parbox{.48\linewidth}{
\centering
\revise{
\resizebox{0.48\textwidth}{!}{
\begin{tabular}{|c|c|c|c|c|c|}
    \hline
    \multicolumn{2}{|c|}{} &\multicolumn{4}{|c|}{\textbf{Fraction of 100 trials with ROP}} \\
     \cline{3-6}
\multicolumn{2}{|c|}{$d=20$, $v_1 = 1$, $v_3 = 4$}  & $v_2=1$ & $v_2=2$ & $v_2=3$ & $v_2=4$ \\\hline
\multirow{7}{*}{\rotatebox{90}{\textbf{$k = 5$}}} &
\multirow{1}{*}{\rotatebox{0}{\textbf{$\bmn = [20,20,60]$}}}  
    & 1  &   1    &    1   &    1\\ \cline{2-6} &
\multirow{1}{*}{\rotatebox{0}{\textbf{$\bmn = [20,80,60]$}}}
    & 1  &   1   &     1  &  0.99 \\\cline{2-6} &
\multirow{1}{*}{\rotatebox{0}{\textbf{$\bmn = [20,80,200]$}}}
    &  1  &   1   &     1    &   1\\\cline{2-6} &
\multirow{1}{*}{\rotatebox{0}{\textbf{$\bmn = [20,20,400]$}}}
    &  1  &   1   &  1   &     1\\\cline{2-6} &
\multirow{1}{*}{\rotatebox{0}{\textbf{$\bmn = [20,80,400]$}}}
    &   1   &  1  &   1   &  1\\\cline{2-6} &
\multirow{1}{*}{\rotatebox{0}{\textbf{$\bmn = [100,100,400]$}}}
    &   1   &  1  &   1   &  1\\\cline{2-6} &
\multirow{1}{*}{\rotatebox{0}{\textbf{$\bmn = [200,200,400]$}}}
    &   1   &  1  &   1   &  1\\\cline{2-6}
    \hline \hline
\multirow{5}{*}{\rotatebox{90}{\textbf{$k = 10$}}} &
\multirow{1}{*}{\rotatebox{0}{\textbf{$\bmn = [20,20,60]$}}}  
    &  0.99  &     1 &   0.99 &   0.97\\ \cline{2-6} &
\multirow{1}{*}{\rotatebox{0}{\textbf{$\bmn = [20,80,60]$}}}
    &    1   &  1  &   0.99  &  0.99\\\cline{2-6} &
\multirow{1}{*}{\rotatebox{0}{\textbf{$\bmn = [20,80,200]$}}}
    &   1    &    1   &    1   &    1\\\cline{2-6}&
\multirow{1}{*}{\rotatebox{0}{\textbf{$\bmn = [20,20,400]$}}}
    & 1   &    1  &  1   &     1\\\cline{2-6}& 
\multirow{1}{*}{\rotatebox{0}{\textbf{$\bmn = [20,80,400]$}}}
    &   1   &  1   &  1    &    1\\\cline{2-6} &
\multirow{1}{*}{\rotatebox{0}{\textbf{$\bmn = [100,100,400]$}}}
    &   1   &  1  &   1   &  1\\\cline{2-6} &
\multirow{1}{*}{\rotatebox{0}{\textbf{$\bmn = [200,200,400]$}}}
    &   1   &  1  &   1   &  1\\\cline{2-6} 
    \hline
  \end{tabular}
  }
  }
}
% \hspace{0.1mm}
\parbox{.48\linewidth}{
\centering
\revise{
\resizebox{0.48\textwidth}{!}{
\begin{tabular}{|c|c|c|c|c|c|c|}
    \hline
    \multicolumn{2}{|c|}{} &\multicolumn{4}{|c|}{\textbf{Fraction of 100 trials with ROP}} \\
     \cline{3-6}
\multicolumn{2}{|c|}{$d=50$, $v_1 = 1$, $v_3 = 4$}  & $v_2=1$ & $v_2=2$ & $v_2=3$ & $v_2=4$ \\\hline
\multirow{7}{*}{\rotatebox{90}{\textbf{$k = 5$}}} &
\multirow{1}{*}{\rotatebox{0}{\textbf{$\bmn = [20,20,60]$}}}  
    &  1  &  1   &   0.98  &  0.96\\ \cline{2-6} &
\multirow{1}{*}{\rotatebox{0}{\textbf{$\bmn = [20,80,60]$}}} 
    &   1  &   1   &  0.99  &   0.96\\\cline{2-6} & 
\multirow{1}{*}{\rotatebox{0}{\textbf{$\bmn = [20,80,200]$}}} 
    &   1  &   1   &  0.98   & 0.99\\\cline{2-6} & 
\multirow{1}{*}{\rotatebox{0}{\textbf{$\bmn = [20,20,400]$}}}
    &    1   &  1  &   1   &  0.99\\\cline{2-6} &
\multirow{1}{*}{\rotatebox{0}{\textbf{$\bmn = [20,80,400]$}}}
    &   1   &  1   &  1   &  1\\\cline{2-6} &
\multirow{1}{*}{\rotatebox{0}{\textbf{$\bmn = [100,100,400]$}}}
    &   1   &  1   &  0.99   &  1\\\cline{2-6} &
\multirow{1}{*}{\rotatebox{0}{\textbf{$\bmn = [200,200,400]$}}}
    &   1   &  1   &  1   &  1\\\cline{2-6} 
    \hline \hline
\multirow{5}{*}{\rotatebox{90}{\textbf{$k = 10$}}} &
\multirow{1}{*}{\rotatebox{0}{\textbf{$\bmn = [20,20,60]$}}}  
    &  1  &  0.97  &  0.96  &  0.92\\ \cline{2-6} &
\multirow{1}{*}{\rotatebox{0}{\textbf{$\bmn = [20,80,60]$}}}
    &   1  &   1  &   0.98 &   0.94\\\cline{2-6} & 
\multirow{1}{*}{\rotatebox{0}{\textbf{$\bmn = [20,80,200]$}}}
    &  1  &   0.98  &  0.99  &  0.99\\\cline{2-6} & 
\multirow{1}{*}{\rotatebox{0}{\textbf{$\bmn = [20,20,400]$}}}
    & 0.99 &   0.99  &  1   &  0.99\\\cline{2-6} &
\multirow{1}{*}{\rotatebox{0}{\textbf{$\bmn = [20,80,400]$}}}
    &    1  &   1  &   1   &  0.99\\\cline{2-6} &
\multirow{1}{*}{\rotatebox{0}{\textbf{$\bmn = [100,100,400]$}}}
    &   1   &  1   &  1  &  1\\\cline{2-6} &
\multirow{1}{*}{\rotatebox{0}{\textbf{$\bmn = [200,200,400]$}}}
    &   1   &  1   &  1   &  1\\\cline{2-6} 
    \hline
  \end{tabular}
  }
  }
}

\caption{\revise{Numerical experiments showing the fractions of trials where the SDP was tight for instances of the HPPCA problem as we varied  $d$, $k$, $\bmn$, and $v_2$ for $L=3$ groups with noise variances $\bmv = [1,v_2,4]$. The left and right tables show the results for $d=20$ and $d=50$, respectively. We swept $v_2$ in a way such that $\bmn = [20,20,60], \bmv = [1,4,4$] is statistically equivalent to the problem in \cref{tbl:trial_counts_hppca:a} for $\bmn = [20, 80], \bmv = [1,4]$ and,  similarly, $\bmn = [20, 80, 400], \bmv = [1,1,4]$ is statistically equivalent to the problem in \cref{tbl:trial_counts_hppca:a} for $\bmn = [100,400], \bmv = [1,4]$.}}
  \label{tbl:trial_counts:hppca:leq3}
\end{table}

\subsection{Assessing global optimality of local solutions}

In this section, we used the Stiefel majorization-minimization (StMM) solver with a linear majorizer from \cite{breloyMMStiefel2021} to obtain a local solution $\Ub_{\text{MM}}$ to \cref{eq:sum_heterogeneous_quadratics} for various inputs $\bmM_i$ \new{and used \cref{thm:dual_certificate} to certify if the local solution is globally optimal or if the certificate fails.} For comparison, we obtained candidate solutions $\Xb_i$ from the SDP and performed a rank-one SVD of each to form $\Ub_{\text{SDP}}$, i.e. 
% \vspace{-3mm}

{$$\Ub_{\text{SDP}} = \mathcal{P}_{\text{St}}([\ub_1 \cdots \ub_k]), \quad \ub_i = \argmax_{\bmu : \|\bmu\|_2 = 1} \bmu' \Xb_i \bmu, $$}

% \vspace{-2mm}
\noindent while measuring how close the solutions are to being rank-one. In the case the SDP is not tight, the rank-one directions from the $\Xb_i$ will not be orthonormal, so as a heuristic, we projected $\Ub_{\text{SDP}}$ onto the Stiefel manifold by the orthogonal Procrustes solution, denoted by the operator $\mathcal{P}_{\text{St}}(\cdot)$ \cite{breloyMMStiefel2021}.

% \vspace{-2mm}
\subsubsection{Synthetic CJD matrices}
\label{s:exp:ss:CJD}

To empirically verify our theory from Section \ref{s:theory}, we generated each $\bmM_i \in \bbR^{d \times d}$ to be a diagonally dominant matrix resembling an approximately rank-$k$ sample covariance matrix,
such that, in a similar manner to HPPCA, $\bmM_1 \succeq \bmM_2 \succeq \cdots \succeq \bmM_k \succeq 0$. Specifically, we first constructed $\bmM_k = \D_k + \N_k$, where $\D_k$ is a diagonal matrix with $k$ nonzero entries drawn uniformly at random from $[0,1]$, and $\N_k = \frac{1}{10d} \Ss \Ss'$ for $\Ss \in \bbR^{d \times 10d}$ whose entries are drawn i.i.d.~as $\clN(0, \sigma \I)$ for varying $\sigma$. 
We then generated the remaining $\bmM_i$ for $i=k-1,\hdots,1$ as $\bmM_i = \bmM_{i+1} + \D_i + \N_i$ with new random draws of $\D_i$ and $\N_i$ 
and normalized all by $1/\max_{i \in [k]} \|\bmM_i\|$ so that $\|\bmM_i \| \leq 1$ for all $i \in [k]$. 
With this setup, by varying $\sigma$, we swept through a range of commuting distances under the spectral norm, i.e. $\max_{i,j \in [k]} \| \bmM_i \bmM_j - \bmM_j \bmM_i \|$. 
For all experiments, we generated problems with parameters $d=10$, $k=3$, and ran StMM for 2,000 maximum iterations or until the norm of the gradient on the manifold was less than $10^{-10}$.

\cref{fig:sim:orderedASD:objgap} shows the gap of the objective values between the SDP relaxation (before projection onto the Stiefel) and the nonconvex problem ($p_{\text{SDP}} - p_{\text{StMM}}$) versus the commuting distance. \cref{fig:sim:orderedASD:Udist} shows the distance between the two obtained solutions computed as $\frac{1}{\sqrt{k}}\||\Ub_{\text{StMM}}'\Ub_{\text{SDP}}| - \bmI_k\|_F$ (where $|\cdot|$ denotes taking the elementwise absolute value) versus commuting distance. \cref{fig:sim:orderedASD:cert} shows the percentage of trials where $\Ub_{\text{StMM}}$ could not be certified globally optimal. Like before,  we declared an SDP's solution ``tight" if \new{the mean error of its solutions to a rank-one matrix with binary eigenvalues, i.e., $ \frac{1}{k} \sum_{i=1}^k \| \lambda^{(i)}_\downarrow - \bme_1 \|_2,$ was less than $10^{-5}$, where $\lambda^{(i)}_\downarrow$ denotes the sorted eigenvalues of $\bmX_i$ in descending order, and $\bme_1$ is the first standard basis vector in $\bbR^d$.} Trials with the marker ``$\circ$" indicate trials where global optimality was certified. The marker ``\textcolor{red}{$\times$}" represents trials where $\Ub$ was not certified as globally optimal and the SDP relaxation was not tight; ``\textcolor{blue}{$\triangle$}" markers indicate trials where the SDP was tight, but \cref{eq:dual_certificate} was not satisfied, implying a suboptimal local maximum.

Towards the left of \cref{fig:sim:orderedASD:objgap}, with small $\sigma$ and the $(\bmM_1,\hdots,\bmM_k)$ all being very close to commuting, 100\% of experiments returned tight rank-one SDP solutions. Notably, there appears to be a sharp cut-off point where this behavior ends, and the SDP relaxation was not tight in a small percentage of cases.
While the large majority of trials still admitted a tight convex relaxation, these results empirically corroborate the sufficient conditions derived in \cref{thm:main_continuity} and \cref{cor:Mi_commute:general}.

Where the SDP is tight, \cref{fig:sim:orderedASD} shows the StMM solver returned the globally optimal solution in more than $95\%$ of the problem instances. Indicated by the ``\textcolor{blue}{$\triangle$}" markers, the remaining cases can only be certified as stationary points, implying a local maximum was found. Indeed, we observed a correspondence between trials with both large objective value gap and distance of the candidate solution to the globally optimal solution returned by the SDP.
\begin{figure}
    \centering
    \subfloat[]{\includegraphics[height=1.75in]{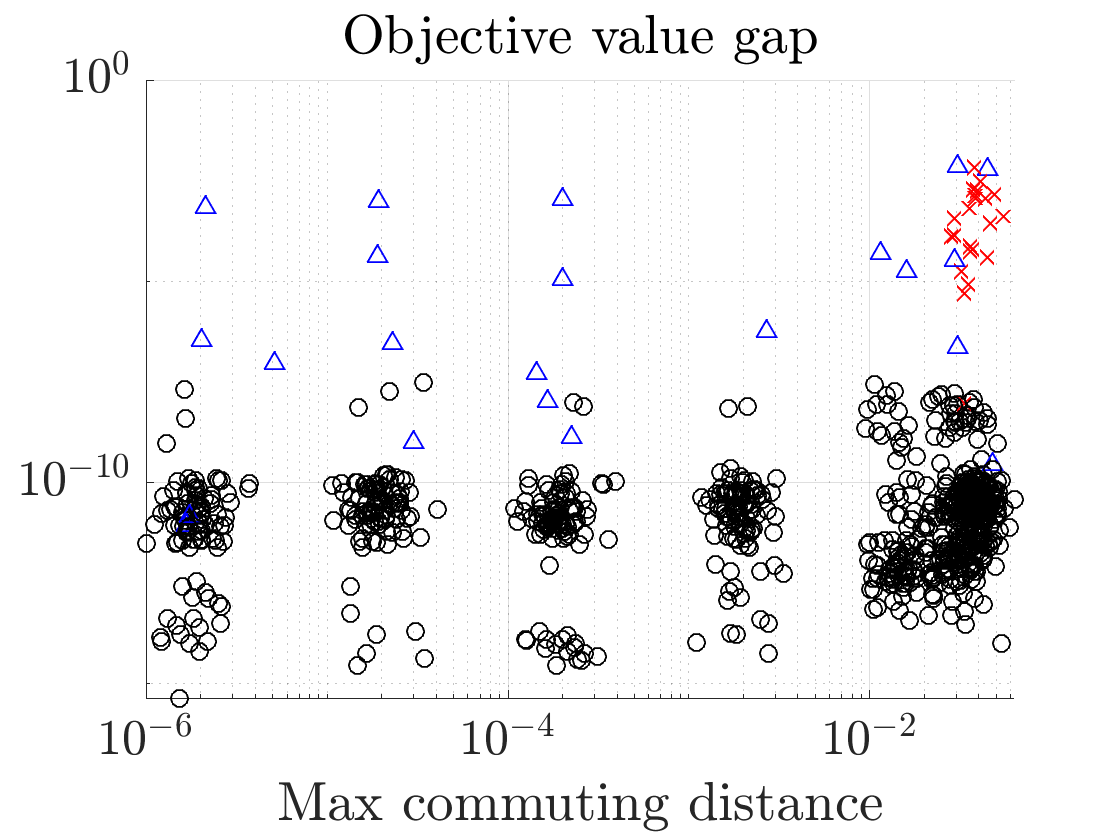} \label{fig:sim:orderedASD:objgap}}
    \subfloat[]{\includegraphics[height=1.75in]{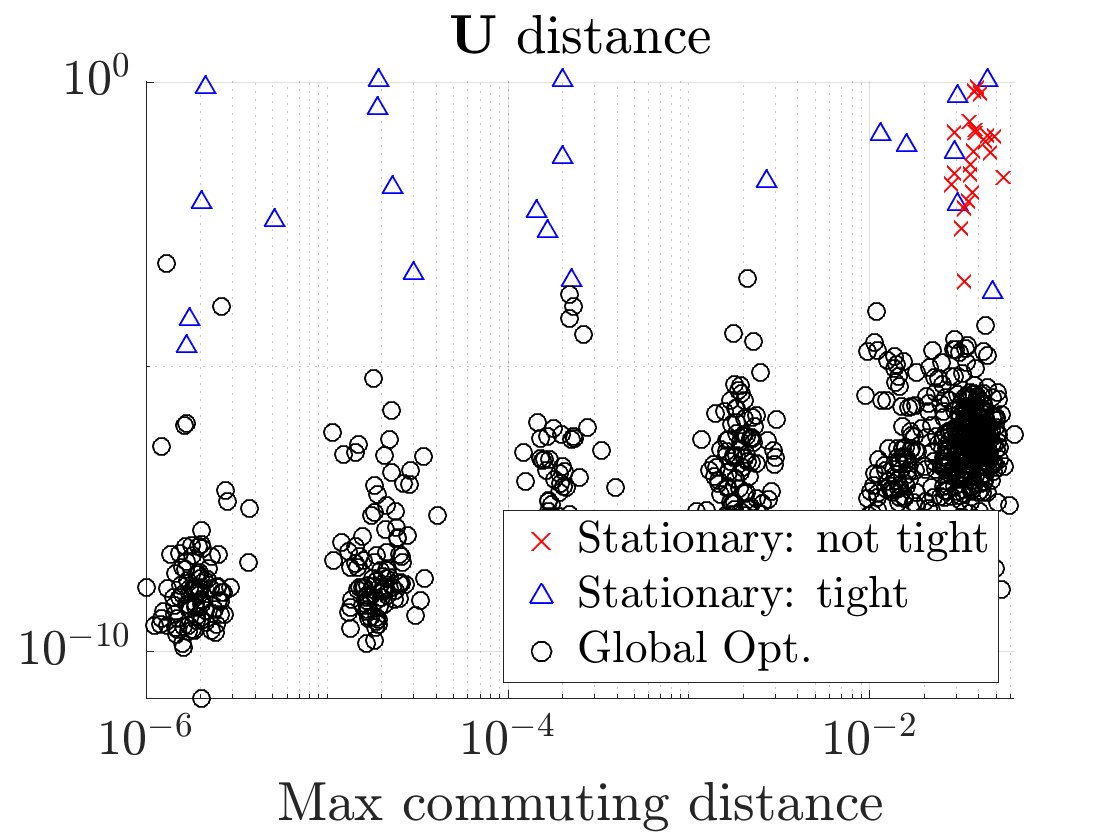} \label{fig:sim:orderedASD:Udist}}
    % \subfloat[]{\includegraphics[height=2in]{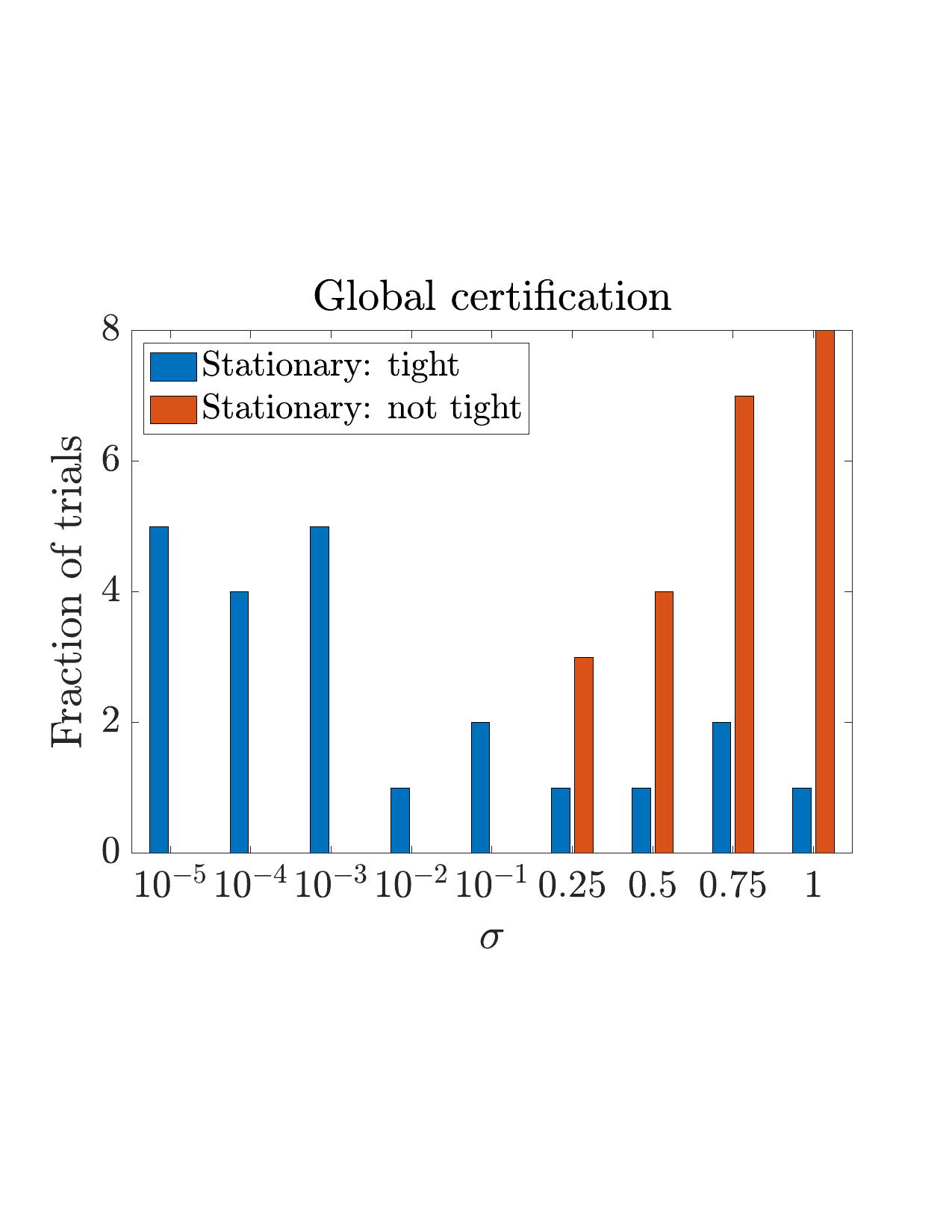} \label{fig:sim:orderedASD:cert}}
    \caption{Numerical simulations for synthetic CJD matrices for $d=10, k=3$ with increasing $\sigma$ and 100 random problem instances for each setting. \visedit{As $\sigma$ grows, the max commuting distance grows.}}
    \label{fig:sim:orderedASD}
\end{figure}

% \vspace{-2mm}

\subsubsection{HPPCA}
We repeated the experiments just described for $\bmM_i$ generated by the model in \cref{eq:hppca:generative_model} for $d = 50$, $\bmlambda = [4, 3.25, 2.5, 1.75, 1]$, and $L=2$ noise groups with variances $\bmv = [1,4]$. \new{For each of 100 trials, we drew a random model with a different generative $\bmU$ for sample sizes $\bmn = [n_1, 4n_1]$, where we swept through increasing values of $n_1$ on the horizontal axis in \cref{fig:sim:hppca:cert}.} For each experiment, we normalized the $\bmM_i$ by the maximum of their spectral norms, and then recorded the results obtained from the SDP and StMM solvers with respect to the computed maximum commuting distance of the $\bmM_i$ in \cref{fig:sim:hppca}. We ran StMM for a maximum of 10,000 iterations, and recorded both the global optimality certification of each StMM run and if the SDP was tight.

 \cref{prop:hppca:almost_commute} suggests that, even with poor SNR like in this example, as the number of data samples increases, the $\bmM_i$ should concentrate to be nearly commuting. This was indeed what we observed: as the number of samples increased \new{in \cref{fig:sim:hppca:cert}}, the maximum commuting distance of the $\M_i$ decreased, \new{i.e., the simulations moved to the left on the horizontal axes of \cref{fig:sim:hppca:objgap,fig:sim:hppca:Udist}.} In this nearly-commuting regime, the SDP obtained tight rank-one $\bmX_i$ in 100\% of the trials, and all of the StMM runs attained the global maximum, suggesting a seemingly benign nonconvex landscape. In contrast, we observed several trials in the low-sample setting where the SDP failed to be tight and a dual certificate was not attained. Also within this regime, several trials of the StMM solver found suboptimal local maxima.

\begin{figure}
    \centering
    \subfloat[]{\includegraphics[height=1.75in]{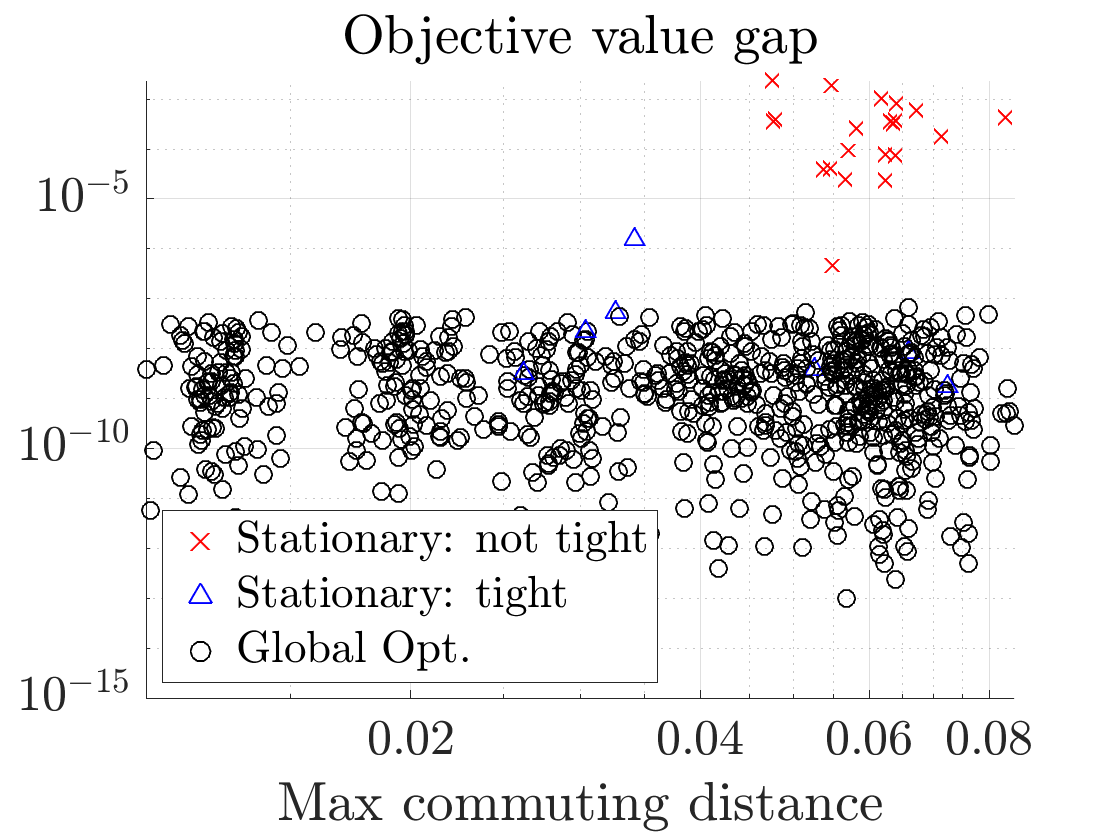} \label{fig:sim:hppca:objgap}}
    \subfloat[]{\includegraphics[height=1.75in]{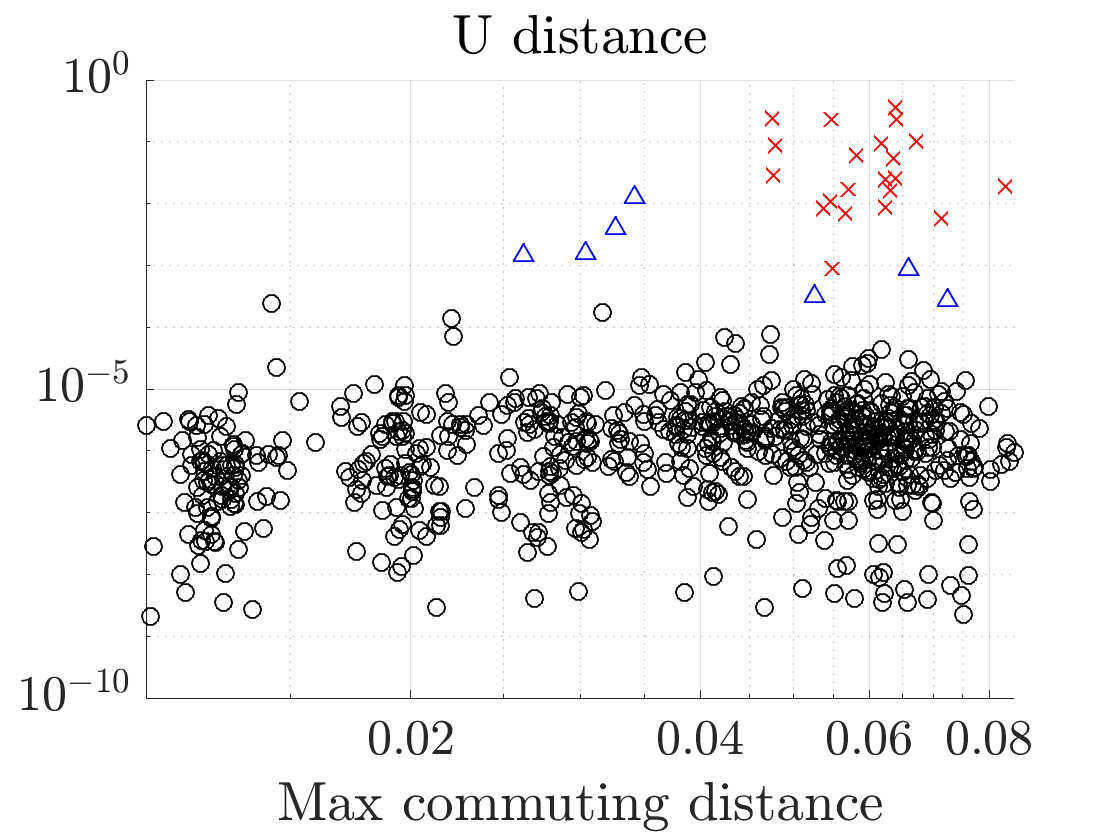} \label{fig:sim:hppca:Udist}}
    % \subfloat[]{\includegraphics[height=2in]{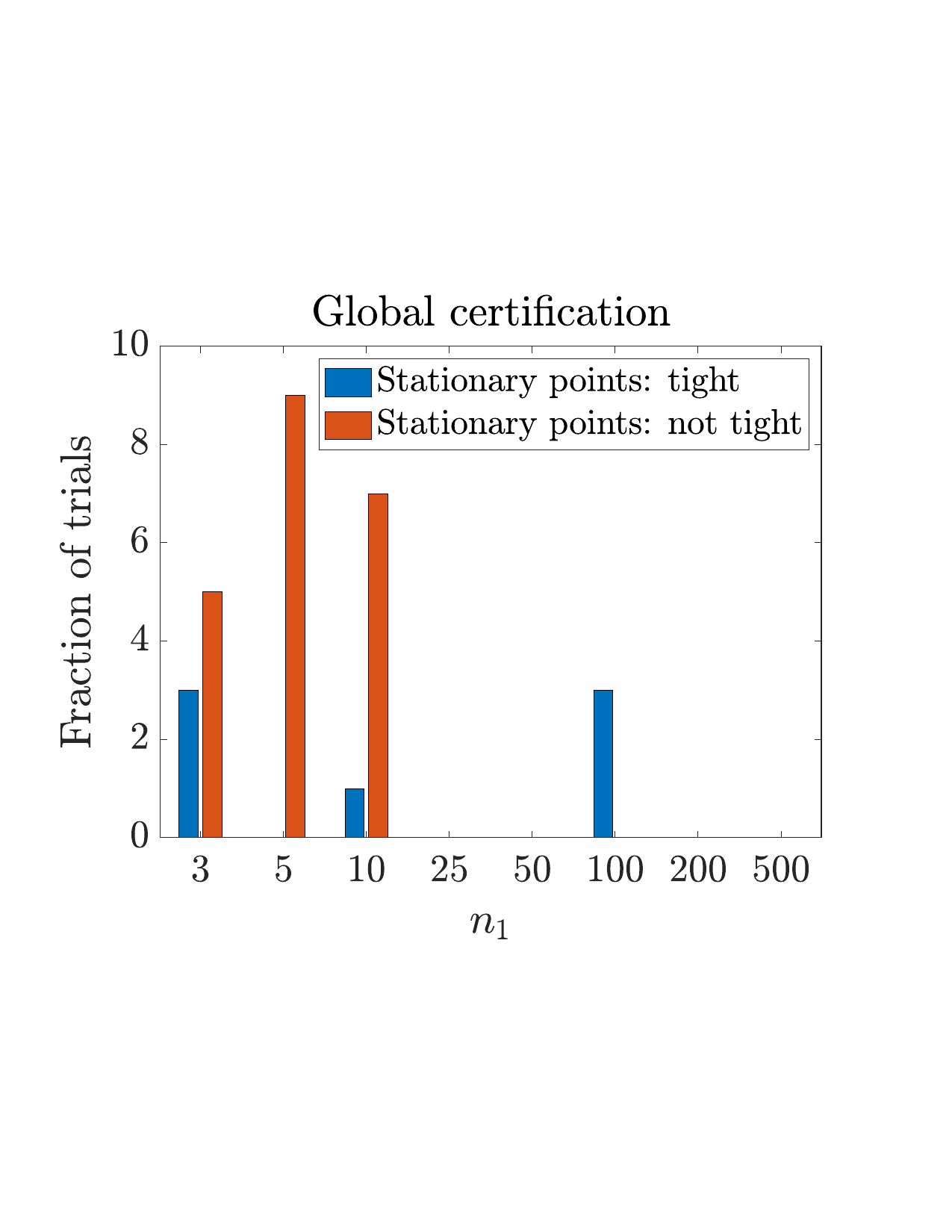} \label{fig:sim:hppca:cert}}
    \caption{Numerical simulations for $\bmM_i$ generated by the HPPCA model in \cref{eq:hppca:generative_model} for $d= 50$, $k = 5$, noise variances $\bmv = [1,4]$, and $\bmlambda = [4, 3.25, 2.5, 1.75, 1]$ with increasing samples $n$. \visedit{As $n$ grows, the max commuting distance gets smaller.}}
    \label{fig:sim:hppca}
\end{figure}

\begin{figure}
    \centering
    \subfloat[Synthetic CJD matrices for $d=10$, $k=3$ with increasing $\sigma$.]{\includegraphics[height=1.95in]{figs/sim-near-commute/orderedASD_certify_StMM.pdf} \label{fig:sim:orderedASD:cert}}
    \hspace{4mm}
    \subfloat[{Data matrices generated by the HPPCA model in \cref{eq:hppca:generative_model} with increasing samples $\bmn = [n_1, 4n_1]$.}]{\includegraphics[height=2in]{figs/hppca/hppca_d50_k5_L2_certify.pdf} \label{fig:sim:hppca:cert}}
    \caption{Percentages of global certification of StMM solutions out of 100 trials. The fractions not shown are tight instances certified as global. }
    \label{fig:sim:global_cert_plots}
\end{figure}

% \vspace{-3mm}
\subsection{Computation time}
% \vspace{-3mm}
% \begin{wrapfigure}[20]{r}{0.55\textwidth}
\begin{figure}
% \vspace{-12mm}
   \begin{center}
     \includegraphics[height=2.5in]{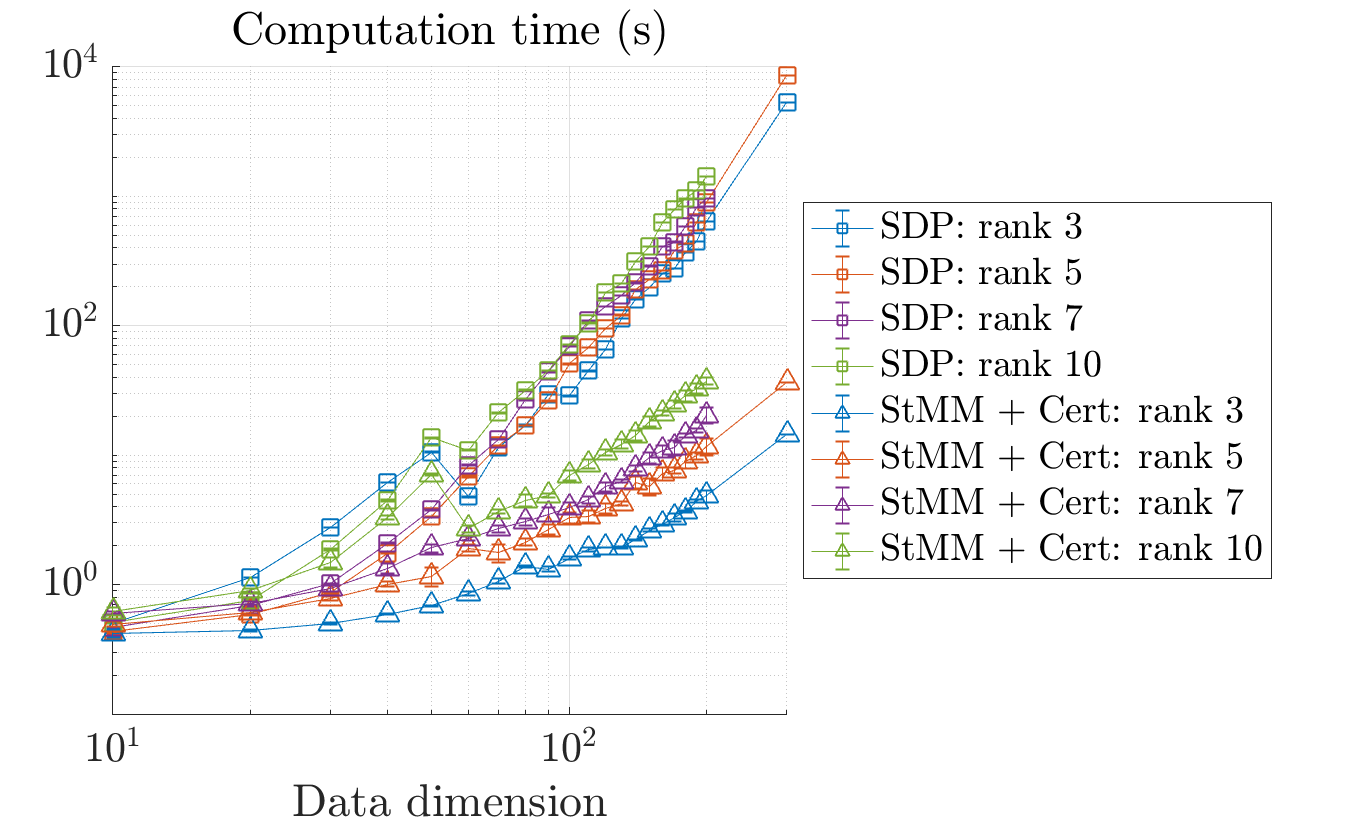}
   \end{center}
   \caption{Computation time of \cref{eq:primal_problem} versus StMM for 2000 iterations with global certificate check \cref{eq:dual_certificate} for HPPCA problems as the data dimension varies. We used $\bmv = [1,4]$, and $\bmn=[100,400]$ and made $\bmlambda$ a $k$-length vector with entries equally spaced in the interval $[1,4]$, \edit{where the rank of the model is $k$.} Markers indicate the median computation time taken over 10 trials, and error bars show the standard deviation. \revise{Due to memory and computation limitations for $d=300$, we only performed one timing test for $k=3$ and $k=5$.} }
  \label{fig:sim:comp_time_sdp_vs_certificate}
 % \end{wrapfigure}
 \end{figure}

\cref{fig:sim:comp_time_sdp_vs_certificate} compares the scalability of our SDP relaxation in \cref{eq:primal_problem} to the StMM solver with the global certificate check in \cref{eq:dual_certificate} for synthetically generated HPPCA problems of varying data dimension. We measured the median computation time across 10 independent trials of both algorithms. 
The experiment strongly demonstrates the computational superiority of the first-order method with our certificate compared to the full SDP, \revise{as predicted by the computational complexity analysis in \cref{s:theory:ss:dual_certificate}.} StMM+Certificate scaled nearly 60 times better in computation time for the largest dimension with $k=3$ and 15 times for $k=10$, 
while offering a crucial theoretical guarantee to a nonconvex problem that may contain spurious local maxima. Thus, we can solve the nonconvex problem posed in \cref{eq:sum_heterogeneous_quadratics} using any choice of solver on the Stiefel manifold and perform a fast check of its terminal output for global optimality.

\section{Future Work \& Conclusion}
\label{s:conclude}
% \vspace{-3mm}
In this work, we proposed a novel SDP relaxation for the sums of heterogeneous quadratic forms problem, from which we derived a global optimality certificate to check a local solution of a nonconvex program. Our other major contribution proved a continuity result showing sufficient conditions guaranteeing the relaxation has the ROP and providing both theoretical and empirical support that a motivating signal processing application--the HPPCA problem--possesses a tight relaxation in many instances. 

While the global certificate scales well compared to solving the full SDP, the LMI feasibility program still requires forming and factoring $d \times d$ size matrices, requiring storage of $\mathcal{O}(d^2)$ elements. One potential solution is to apply recent works like \cite{yurtsever2021scalable} to our problem, which use randomized algorithms to reduce the storage and arithmetic costs for scalable semidefinite programming. Further, it remains interesting to prove a sufficient analytical certificate as well as proving more general sufficient conditions on the $\M_i$ that guarantee the ROP. \edit{A key future extension is to precisely quantify the size of the region in \cref{thm:main_continuity} where the SDP has the ROP.}

\visedit{Another direction for future research would be to generalize Theorem \ref{thm:main_continuity} or to simplify its proof. 
% A recent paper in this direction \cite{cifuentes2022stability} analyzes general sufficient conditions under which an SDP relaxation, which has a rank-1 optimal solution, retains a rank-1 optimal solution after the perturbation of the objective and/or constraint data. The analysis in \cite{cifuentes2022stability} does not seem to apply directly to the proof of Theorem \ref{thm:main_continuity} for two reasons: (i) the authors of \cite{cifuentes2022stability} analyze the basic Shor relaxation, i.e., the simplest SDP relaxation for a given quadratically constrained quadratic program, whereas in this paper we have considered a more sophisticated relaxation (see Appendix {\color{red} XXX}); and (ii) their relaxation has a single-block matrix variable, which is analyzed to have rank-1 at optimality, whereas we analyze several blocks $\X_1, \ldots, \X_k$, each of which will have rank-1 at optimality \textcolor{orange}{when the SDP is tight}. 
% Nevertheless,
While the problem in \cite{cifuentes2022stability} is distinct from our own for the reasons discussed in \Cref{s:related_work}, it would be interesting to determine whether the ideas and insights of their theory can be applied in our case.}

% While we hope the work herein has a positive impact in HPPCA applications like air quality monitoring \cite{hong2021heppcat} or medical imaging, we acknowledge the potential for dimensionality reduction algorithms to yield disparate reconstruction errors on populations within a dataset, such as PCA on labeled faces in the wild data set (LFW), which returns higher reconstruction error for women than men even with equal population ratios in the dataset \cite{samadi2018price}; also see \cite{Morgenstern2019}.  

% \section{Acknowledgements}
% The authors would like to thank Nicolas Boumal for his helpful notes and suggestions for optimization on the Stiefel manifold, and David Hong and Jeffrey Fessler for their conversations regarding HPPCA.

\section*{Acknowledgments}
The authors would like to thank Nicolas Boumal for his helpful discussions, references, and notes relating to dual certificates of low-rank SDP's and manifold optimization. They would also like to thank David Hong and Jeffrey Fessler for their feedback on this paper and their discussions relating to heteroscedastic PPCA. We also would like to mention and give special thanks to Alex Wang who pointed out an error in a previous version of this manuscript and for his discussions on how to correct it. 

% \bibliographystyle{siamplain}
% \bibliography{references}
% \putbib

\appendix

\section{Proofs of Results in \texorpdfstring{\Cref{s:sdp}}{Section 2}}
\label{appendix:sdp_relaxation_pfs}

\begin{proof}[Proof of \cref{lem:strongdualityholds}]
        The problem is convex and satisfies Slater's condition, see Lemma \ref{lem:slater}. Specifically, for optimal primal solutions $\Xb_i$ and optimal dual solutions $\Yb$, $\Zb_i$, and $\nubi_i$ for all $i \in [k]$, we have $\langle \bmI - (\sum_{i=1}^k \Xb_i), \Yb \rangle = 0$ and therefore $\tr(\Yb) = \langle  \Yb ,\sum_{i=1}^k\Xb_i \rangle$. Then
        \begin{align*}
            % d^* &= -\left \langle \sum_{i=1}^k \bmM_i + \Zb_i + \nubi_i\bmI,  \Xb_i\right\rangle + \sum_{i=1}^k \nubi_i = -\tr\left(\sum_{i=1}^k \bmM_i \Xb_i\right),
            d^* &= \left \langle \sum_{i=1}^k \bmM_i + \Zb_i - \nubi_i\bmI,  \Xb_i\right\rangle + \sum_{i=1}^k \nubi_i = \tr\left(\sum_{i=1}^k \bmM_i \Xb_i\right),
        \end{align*}
        since $\langle \Zb_i, \Xb_i \rangle= 0$ and $\sum_{i=1}^k\nubi_i(1 - \tr(\Xb_i)) = 0$.
        Thus, $p^* = d^*$.
    \end{proof}

\begin{lemma} \label{lem:slater}
The primal problem in \eqref{eq:primal_problem} is strictly feasible {for $k < d$}. 
\end{lemma}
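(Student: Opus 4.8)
The plan is to produce an explicit point that lies in the interior of the feasible set of \eqref{eq:primal_problem}. After writing the inequality $\sum_{i=1}^k \bmX_i \preccurlyeq \I$ as the conic constraint $\I - \sum_{i=1}^k \bmX_i \succcurlyeq 0$, the feasible set is cut out by the conic constraints $\bmX_i \succeq 0$ ($i=1,\dots,k$) and $\I - \sum_i \bmX_i \succeq 0$, together with the affine equalities $\tr(\bmX_i) = 1$. Slater's condition therefore amounts to exhibiting $\bmX_1,\dots,\bmX_k$ that satisfy the equalities exactly while making every PSD inequality strict, i.e.\ $\bmX_i \succ 0$ for all $i$ and $\I - \sum_i \bmX_i \succ 0$; once such a point is produced, \cref{lem:strongdualityholds} follows from the standard strong-duality theorem for conic programs that satisfy Slater's condition.

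The natural candidate is the uniform point $\bmX_i = \tfrac1d \I_d$ for every $i = 1,\dots,k$. The affine constraints hold since $\tr(\tfrac1d \I_d) = 1$; each $\bmX_i$ has all eigenvalues equal to $\tfrac1d > 0$, so $\bmX_i \succ 0$; and $\sum_{i=1}^k \bmX_i = \tfrac{k}{d}\I_d$, whence $\I_d - \sum_i \bmX_i = \tfrac{d-k}{d}\I_d \succ 0$ whenever $k < d$. So in the generic case $k < d$ this single point certifies strict feasibility and we are done.

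The one point that needs care — and the main (indeed essentially only) obstacle — is the boundary case $k = d$. Here the equalities $\tr(\bmX_i) = 1$ force $\tr\!\left(\sum_i \bmX_i\right) = d = \tr(\I_d)$, so $\I_d - \sum_i \bmX_i \succcurlyeq 0$ can hold only with equality; no feasible point makes this PSD inequality strict, and Slater's condition in its literal form fails. I would dispose of this either by restricting attention to $k < d$ (the case $k = d$ being the degenerate full-basis problem, for which \eqref{eq:primal_problem} collapses to an optimization over $\{\bmX_i \succeq 0 : \tr(\bmX_i)=1,\ \sum_i \bmX_i = \I_d\}$), or, if one wants to retain $k=d$, by replacing the now-redundant inequality with the equality $\sum_i \bmX_i = \I_d$ and invoking the relative-interior form of Slater's condition, for which $\bmX_i = \tfrac1d\I_d$ still serves since it lies in the relative interior of that feasible set. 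Either route recovers the constraint qualification needed for \cref{lem:strongdualityholds}, and the argument is otherwise immediate.
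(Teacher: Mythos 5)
Your proof matches the paper's exactly, using the same Slater point $\bmX_i = \tfrac1d \I_d$ for all $i$. Your additional observation that the boundary case $k=d$ must be excluded (or treated via relative interior and an equality constraint) is correct and is a genuine subtlety the paper's proof glosses over: its claim that $\sum_i \bmX_i = \tfrac{k}{d}\I \prec \I$ silently assumes $k < d$.
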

\begin{proof}
To be strictly feasible we must have $\bmX_i$, $i=1,\dots,k$ such that $$0 \prec \sum_{i=1}^k \bmX_i \prec \bmI, \quad \tr(\bmX_i) = 1, \quad \bmX_i \succ 0, \quad  i=1,\dots,k.$$ Suppose $\bmX_i = \frac{1}{d} \bmI$ for all $i$. Then $\tr(\bmX_i) = 1$ and $\bmX_i \succ 0$ for all $i$, and $\sum_{i=1}^k \bmX_i = \frac{k}{d} \bmI$, satisfying $0 \prec \sum_{i=1}^k \bmX_i \prec \bmI$ {when $k < d$}. 
\end{proof}

\begin{proof}[Proof of \cref{lem:projtight}]
Since the problem in \cref{eq:primal_problem} has a larger constraint set than \cref{eq:sum_heterogeneous_quadratics}, any solution to \cref{eq:primal_problem} that satisfies the constraints of \cref{eq:sum_heterogeneous_quadratics} also constitutes a solution to this original nonconvex problem. 

For the ``if" direction, assume that the optimal $\bmX_i$ for \cref{eq:primal_problem} have the rank-one property. Since $\tr(\bmX_i)=1$ by definition of \cref{eq:primal_problem}, when we decompose $\bmX_i = \bmu_i \bmu_i'$ we have $\bmu_i$ that are norm-1. In order for $\sum_{i=1}^k \bmX_i \preccurlyeq \bmI$, the $\bmu_i$ must be orthogonal. 
For the ``only if" direction, assume that the solution to the SDP relaxation in \eqref{eq:primal_problem} is the optimal solution to the original nonconvex problem in \eqref{eq:sum_heterogeneous_quadratics} {in the sense that $\bmX_i = \bmu_i \bmu_i'$ gives the optimal $\bmU = \left[ \begin{matrix} \bmu_1 & \cdots & \bmu_k\end{matrix} \right]$. Then by definition we see} that the $\bmX_i$ have the rank-one property.

\end{proof}

\begin{lemma}
\label{lem:orthrank1}
 \sloppypar{Suppose $\bmX_i$ for $i=1,\dots,k$ each have trace 1 and satisfy $\lambda_1(\bmX_i) = 1$, and therefore each $\bmX_i$ is rank 1. We decompose $\bmX_i = \bmu_i \bmu_i'$ and note that $\bmu_i$ are norm-1. Then $\sum_{i=1}^k \bmX_i$ satisfies} $0 \preccurlyeq \sum_{i=1}^k \bmX_i \preccurlyeq \bmI$ if and only if $\bmu_i'\bmu_j=0 \quad \forall i\neq j\;.$
\end{lemma}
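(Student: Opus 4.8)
The plan is to handle the two implications separately. The forward (``if'') direction is immediate, while the reverse (``only if'') direction rests on the structural observation that the stated hypotheses force $\sum_{i=1}^k \bmX_i$ to be an orthogonal projection of rank exactly $k$.

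For the ``if'' direction, suppose $\bmu_i'\bmu_j = 0$ for all $i \neq j$. Since each $\|\bmu_i\| = 1$, the matrix $\sum_{i=1}^k \bmu_i\bmu_i'$ is the orthogonal projection onto $\operatorname{span}\{\bmu_1,\dots,\bmu_k\}$, so its eigenvalues lie in $\{0,1\}$ and hence $0 \preccurlyeq \sum_{i=1}^k \bmX_i \preccurlyeq \I$.

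For the ``only if'' direction, assume $0 \preccurlyeq \sum_{i=1}^k \bmX_i \preccurlyeq \I$. First I would record that $\tr\!\left(\sum_{i=1}^k \bmX_i\right) = \sum_{i=1}^k \tr(\bmX_i) = k$, that $\sum_{i=1}^k \bmX_i$ is PSD with rank at most $k$ (being a sum of $k$ rank-one PSD matrices), and that the bound $\preccurlyeq \I$ makes every eigenvalue at most $1$. A symmetric matrix with at most $k$ nonzero eigenvalues, each in $[0,1]$, and trace $k$ must have exactly $k$ eigenvalues equal to $1$ and the rest $0$; hence $P := \sum_{i=1}^k \bmX_i$ is an orthogonal projection of rank $k$. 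Next I would show each $\bmu_i \in \operatorname{range}(P)$: every column of $P$ is a linear combination of $\bmu_1,\dots,\bmu_k$, so $\operatorname{range}(P) \subseteq \operatorname{span}\{\bmu_i\}$, and since $\dim\operatorname{range}(P) = k \le \dim\operatorname{span}\{\bmu_i\} \le k$ this inclusion is an equality; thus $\bmu_i \in \operatorname{range}(P)$ and therefore $P\bmu_i = \bmu_i$. Finally, pairing $P\bmu_i = \bmu_i$ with $\bmu_i$ yields the Pythagorean-type identity $1 = \|\bmu_i\|^2 = \bmu_i' P \bmu_i = \sum_{j=1}^k (\bmu_i'\bmu_j)^2 = 1 + \sum_{j \neq i} (\bmu_i'\bmu_j)^2$, so $\sum_{j \neq i}(\bmu_i'\bmu_j)^2 = 0$ and hence $\bmu_i'\bmu_j = 0$ for all $j \neq i$.

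The step I expect to require the most care — and which I would flag as the crux — is the passage from ``$P$ is a rank-$k$ orthogonal projection'' to ``$P\bmu_i = \bmu_i$'', since it hinges on identifying $\operatorname{range}(P)$ with $\operatorname{span}\{\bmu_i\}$ through the dimension count; the rest is a routine trace/eigenvalue argument. As an alternative to this route, one could observe that $P = UU'$ with $U := [\bmu_1 \cdots \bmu_k]$ of full column rank $k$, so $P^2 = P$ gives $(U'U)^3 = (U'U)^2$ with $U'U$ invertible, forcing $U'U = \I_k$; I would note this but carry out the first argument.
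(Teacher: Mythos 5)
Your proof is correct and its backbone matches the paper's: both begin by showing that $P := \sum_{i=1}^k \bmX_i$ is a rank-$k$ orthogonal projection, using the identical reasoning that $\tr(P)=k$, $\rank(P)\le k$ by subadditivity, and all eigenvalues lie in $[0,1]$, forcing exactly $k$ eigenvalues equal to $1$. Where you diverge is in how you extract orthogonality from this fact. The paper computes the single scalar identity $\tr(P^2)=k$ and expands it as $\sum_{i}(\bmu_i'\bmu_i)^2 + \sum_{i\neq j}(\bmu_i'\bmu_j)^2 = k$, so the cross terms vanish all at once. You instead argue $\operatorname{range}(P)=\operatorname{span}\{\bmu_i\}$ by a dimension count, conclude $P\bmu_i=\bmu_i$, and read off $\sum_{j\neq i}(\bmu_i'\bmu_j)^2 = 0$ separately for each $i$ from $\bmu_i'P\bmu_i = 1$. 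Both routes are sound; the paper's is marginally more economical (one trace identity, no reasoning about ranges), while yours makes the geometry explicit (each $\bmu_i$ is fixed by the projection). Your flagged alternative via $U=[\bmu_1\cdots\bmu_k]$ and $(U'U)^3=(U'U)^2$ also works, and in fact streamlines the dimension step, since $\rank(U)=\rank(UU')=\rank(P)=k$ gives invertibility of $U'U$ immediately without needing the range/span equality.
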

\begin{proof}
Forward direction: Suppose $\bmX = \sum_{i=1}^k \bmX_i$ has eigenvalues in $[0, 1]$ and $\tr(\bmX) = k$.  Since $\mathrm{rank}(\bmX)\leq k$ by the subadditivity of rank, this implies both that $\bmX$ is rank-$k$ and its eigenvalues are either zero or one. Note then that $$\tr (\bmX\bmX') = k = \tr \left( (\sum_i \bmu_i \bmu_i')(\sum_i \bmu_i \bmu_i') \right) = \sum_i (\bmu_i' \bmu_i)^2 + \tr\left( 2 \sum_{i\neq j}(\bmu_i' \bmu_j)^2\right)\;.$$ Since $\bmu_i$ are norm-1 then the sum $\sum_i (\bmu_i' \bmu_i)^2 = k$. This means $$\tr\left( 2 \sum_{i\neq j}(\bmu_i' \bmu_j)^2\right)= 0\;,$$ which is true if and only if $\bmu_i' \bmu_j = 0$.

The backward direction is immediate because when  $\bmu_i' \bmu_j = 0$ for $i\neq j$, $\sum_{i=1}^k \bmu_i \bmu_i'$ is the eigenvalue decomposition of $\bmX$ with $k$ eigenvalues equal to one. 
\end{proof}

\begin{proof}[Proof of \cref{lem:Zrank_orthoXi}]
     Suppose $\bmZ_i$ is rank $d-1$. By complementarity at optimality, we have $\bmZ_i \bmX_i  = 0 \quad \forall i$, which means $\bmX_i$ lies in the nullspace of $\bmZ_i$, which has dimension 1, so each $\bmX_i$ is rank-one. By primal feasibility, $\tr(\bmX_i) = 1$, so $\lambda_1(\bmX_i) = 1 \quad \forall i=1,\hdots,k$. By \cref{lem:orthrank1}, the optimal solution is an orthogonal projection matrix, and the optimal $\bmX_i$ are orthogonal.
\end{proof}

\section{Proof of \texorpdfstring{\cref{thm:dual_certificate} and \cref{thm:dual_certificate_eps}}{Theorem 4.1 and Corollary 4.2}}
\label{appendix:optimality_conditions}

\begin{proof}[Proof of \cref{thm:dual_certificate}]
By Lemma \ref{lem:strongdualityholds}, primal and dual feasible solutions of \cref{eq:primal_problem} and \cref{eq:dual_problem}, $\Xb_i, \Zb_i, \Yb, \nub$, are simultaneously optimal if and only if they satisfy the following KKT conditions \cite{boyd2004convex}, where the variables and constraints are indexed by $i \in [k]$:
\begin{align*}
        &\Xb_i \succeq 0, \quad \sum_{i=1}^k \Xb_i \preceq \bmI, \quad \tr(\Xb_i) = 1 \label{KKT:a} \tag{KKT-a}\\
        &\Yb = \bmM_i + \Zb_i - \nubi_i \bmI,  \qquad \Yb \succeq 0 \label{KKT:b} \tag{KKT-b}\\
        & \left\langle \bmI - \sum_{i=1}^k \Xb_i, \Yb\right\rangle = 0 \label{KKT:c} \tag{KKT-c}\\
        & \langle \Zb_i, \Xb_i \rangle = 0 \label{KKT:d} \tag{KKT-d}\\
        &\Zb_i \succeq 0 \label{KKT:e} \tag{KKT-e}.
\end{align*}

Similar to the work in \cite{won2021orthogonalSiamJMAA}, our strategy is then to construct $\Xb_i$ and $\Yb, \Zb_i, \nub$ satisfying these conditions. Given $\Ub$ and $\nub$ in the statement of the theorem, we define $\Xb_i = \ub_i \ub_i'$, $\Yb = \Ub(\Lambdab - \bmD_{\nub})\Ub'$, and $\Zb_i = \Yb + \bar{\nu}_i \bmI - \bmM_i$. By construction, $\Xb_i$ satisfy \cref{KKT:a}. \edit{Also by construction $\Yb = \bmM_i + \Zb_i - \nub_i \bmI$, and the assumption that $\Lambdab \succeq \bmD_{\nub}$ ensures $\Yb \succeq 0$ to satisfy (\ref{KKT:b}).} One can also verify that $\langle \bmI - \Xb, \Yb \rangle = 0$ by construction, thus satisfying \cref{KKT:c}. So it remains to show 
%$\Yb \succeq 0$, 
$\langle \Zb_i, \Xb_i \rangle = 0$ and $\Zb_i \succeq 0$.

% We note that we have shown in \cref{lem:stationary_conditions} and \cref{lem:lambda:psd} that $\Lambdab$ is symmetric PSD, which is a necessary condition for this assumption to hold, given the fact that the Lagrange multipliers $\nubi_i$ corresponding to the trace constraints are nonnegative by \cref{lem:nuneg}.

Moreover, $\Zb_i \succeq 0$ by the assumption in \cref{eq:dual_certificate}, satisfying \cref{KKT:e}. We finally verify \cref{KKT:d}, i.e. $\langle \Zb_i, \Xb_i \rangle = 0$, with $\Ub = [\ub_1 \cdots \ub_k]$:
    \begin{align*}
        \langle \Zb_i, \Xb_i \rangle &= \langle \Yb + \bar{\nu}_i \bmI - \bmM_i, \Xb_i \rangle  = \langle \Ub(\Lambdab - \bmD_{\nub})\Ub' + \bar{\nu}_i \bmI - \bmM_i, \ub_i \ub_i'\rangle \\
        &= \ub_i' \Ub \,\Ub' \sum_{j=1}^k \bmM_j \Ub \bmE_j \Ub' \ub_i - \ub_i' \Ub \bmD_{\nub} \Ub' \ub_i + \bar{\nu}_i - \ub_i' \bmM_i \ub_i \\
        &= \bme_i' \Ub' \sum_{j=1}^k \bmM_j \ub_j \bme_j' \bme_i - \bme_i'\bmD_{\nub}\bme_i + \bar{\nu}_i - \ub_i' \bmM_i \ub_i \\
        &= \ub_i' \bmM_i \ub_i - \bar{\nu}_i + \bar{\nu}_i - \ub_i' \bmM_i \ub_i = 0.
    \end{align*} \end{proof}

\begin{remark}
    \edit{Given the fact that the Lagrange multipliers $\nubi_i$ corresponding to the trace constraints are nonnegative by \cref{lem:nuneg}, this also implies that $\Lambdab \succeq 0$.
We note that this indeed fulfills a necessary condition for $\Ub$ to be a second-order stationary point by \cref{lem:stationary_conditions} and \cref{lem:lambda:psd} in the supplement.}
\end{remark}
See \cref{appendix:dual_certificate_remarks} for additional remarks.
% \newpage

For the following results in this paper, we require a proof that the optimal $\nub$ in \cref{eq:dual_problem} are nonnegative.

\begin{lemma} \label{lem:nuneg}
Assume all $\bmM_i$ are PSD, and $k < d$. Then all $\nu_i \geq 0$ at optimality.
\end{lemma}

\begin{proof}

\edit{For a contradiction suppose the optimal $\bmnu$ has at least one coordinate that is strictly negative.}
Without loss of generality, let $\nu_1 < 0$ be the smallest (most negative) coordinate of $\bmnu$, and rewrite the objective in terms of $\bmM_1$ and eliminating $\bmY$ as
\begin{align}
    d^* =\min_{\nu_i, \bmZ_i}&   \tr(\bmZ_1 + \bmM_1) - d \nu_1 + \sum_{i=1}^k \nu_i \label{dual:onlyM1} \\
    \text{s.t. }
    &\bmM_i + \bmZ_i \succcurlyeq \nu_i\bmI \quad \forall i=1,\hdots,k \nonumber \\
    & \bmM_1 + \bmZ_1 - \nu_1\bmI = \bmM_j + \bmZ_j - \nu_j\bmI \quad \forall j=2,\hdots,k \label{eq:equalityconstraints} \\
    & \bmZ_i \succcurlyeq 0 \quad \forall i=1,\dots,k.\nonumber
\end{align}
Now consider new variables $\{\tilde\nu_i, \tilde\bmZ_i\}_{i=1}^k$, where we let $\tilde \nu_1 = 0$, $\tilde \nu_i = \nu_i - \nu_1$ for $i=2,\dots,k$, and leave all the $\bmZ$ variables unchanged: $\tilde \bmZ_i = \bmZ_i$ for all $i$. 

These new variables are still feasible. Certainly $\bmM_1+\tilde \bmZ_1 = \bmM_1 + \bmZ_1 \succcurlyeq \tilde \nu_1 \bmI = 0$ as both $\bmM_1, \bmZ_1$ are PSD. Also $\bmM_1 + \tilde \bmZ_1 - \tilde \nu_1\bmI = \bmM_j + \tilde \bmZ_j - \tilde \nu_j\bmI$, since substituting in, we have $\bmM_1 + \bmZ_1 = \bmM_j + \bmZ_j - (\nu_j - \nu_1) \bmI$, which was feasible for the original optimal point. From this last equation note that since $\bmM_1+\bmZ_1\succcurlyeq 0$, then $\bmM_j + \bmZ_j - (\nu_j - \nu_1)\bmI = \bmM_j + \tilde \bmZ_j - \tilde \nu_j \bmI \succeq 0$. 

However, \revise{with the assumption that $k<d$}, this yields a contradiction because we have reduced the objective value from $$\tr(\bmZ_1+\bmM_1) - d \nu_1 + \sum_{i=1}^k \nu_i \qquad \text{to} \qquad \tr(\bmZ_1+\bmM_1) - k \nu_1 + \sum_{i=1}^k \nu_i\;.$$ 
% to $$\tr(\bmZ_1+\bmM_1) - k \nu_1 + \sum_{i=1}^k \nu_i\;.$$
Therefore $\nu_i<0$ cannot be optimal.
\end{proof}

\begin{proof}[Proof of \cref{thm:dual_certificate_eps}]
We first argue that this problem attains an optimal solution as follows. We note that (\ref{eq:dual_certificate_eps}) is feasible by taking $\nub = 0$ and $\epsilon$ sufficiently large. Next, the optimal value of (\ref{eq:dual_certificate_eps}) is clearly bounded below by 0. In addition, for any fixed $\bar\epsilon$, one can see that the level set of feasible points $(\epsilon, \nub)$ with $\epsilon \le \bar\epsilon$ is bounded via the constraint $0 \preceq  \D_{\nub} \preceq \Lambdab + \epsilon \I$, which in particular bounds each entry of $\nub$ \edit{from below by \cref{lem:nuneg}} and from above by the corresponding diagonal entry of $\Lambdab$. Hence, an optimal solution $(\epsilon^*, \nub^*)$ is attained.
% }
Let $\epsilon^*$ be the unique optimal value of the optimization problem. From this $\epsilon^*$, now we construct a solution to the following approximate KKT conditions \cite{boyd2004convex} of (SDP-P), indexing the variables and constraints by $i \in [k]$:
\begin{align*}
        &\Xb_i \succeq 0, \quad \sum_{i=1}^k \Xb_i \preceq \bmI, \quad \tr(\Xb_i) = 1 \label{KKT:a_eps} \tag{eps-KKT-a}\\
        &\Yb = \bmM_i + \Zb_i - \nubi_i \bmI,  \qquad \Yb \succeq -\epsilon^* \I \label{KKT:b_eps} \tag{eps-KKT-b}\\
        & \left\langle \bmI - \sum_{i=1}^k \Xb_i, \Yb \right\rangle = 0 \label{KKT:c_eps} \tag{eps-KKT-c}\\
        & \langle \Zb_i, \Xb_i \rangle = 0 \label{KKT:d_eps} \tag{eps-KKT-d}\\
        &\Zb_i \succeq -\epsilon^* \I \label{KKT:e_eps} \tag{eps-KKT-e}.
\end{align*}
Given a $\Ub$ and optimal $\nub$ to \cref{eq:dual_certificate_eps}, we define $\Xb_i = \ub_i \ub_i'$, $\Yb = \Ub(\Lambdab - \bmD_{\nub})\Ub'$, and $\Zb_i = \Yb + \nubi_i \bmI - \bmM_i$. By construction, $\Xb_i$ satisfy \cref{KKT:a_eps}, and it is clear that $\Yb = \bmM_i + \Zb_i - \nub_i \bmI$ satisfies the first condition in \cref{KKT:b_eps}. One can also verify that $\langle \bmI - \Xb, \Yb \rangle = 0$ by construction, thus satisfying \cref{KKT:c_eps}. 

One can easily show that $\Lambdab - \bmD_{\nub} \succeq -\epsilon^* \I$
ensures $\Yb \succeq -\epsilon^* \I$ (\ref{KKT:b_eps}).
Moreover, $\Zb_i \succeq -\epsilon^* \I$ by the assumption in \cref{eq:dual_certificate_eps},
satisfying \cref{KKT:e_eps}. Just as we did in the proof of Theorem 4.1 we finally verify \cref{KKT:d_eps}, i.e.
$\langle \Zb_i, \Xb_i \rangle = 0$, with $\Ub = [\ub_1 \cdots \ub_k]$.

Let us now focus on $\Yb, \Zb_i$, which are approximately feasible for the dual problem. By defining $\Y := \Yb + \epsilon^* \I$, $\bmZ_i := \Zb_i + \epsilon^* \I$, $\bmZ := (\bmZ_1,\hdots,\bmZ_k)$, and $\bmnu := \nub$, we recover dual feasibility, i.e., $\bmY \succeq 0$ and $\bmZ_i \succeq 0$. Hence, the duality gap between $\Xb_1, \hdots, \Xb_k$ and $\bmY, \bmZ, \bmnu$ is nonnegative and, in fact, equals $\epsilon^* d$ due to the approximate KKT system:
\begin{align*}
    d(\bmY,\bmZ,\bmnu) - p(\Ub) &= \tr(\bmY) + \sum_i \bmnu_i - \sum_i \langle \bmM_i,  \Xb_i \rangle\\
    &= \tr(\bmY) + \sum_i \bmnu_i - \sum_i \langle  \bmY - \bmZ_i + \bmnu_i \I  , \Xb_i \rangle\\
    &= \tr(\bmY) + \sum_i \bmnu_i - \left \langle \bmY , \sum_i \Xb_i  \right \rangle + \sum_i \langle \bmZ_i , \Xb_i \rangle - \sum_i \bmnu_i \\
    &= \left \langle \bmY , \I - \sum_i \Xb_i \right \rangle + \sum_i \langle \bmZ_i ,\Xb_i \rangle \\
    &= \left \langle \Yb + \epsilon^* \I  , \I - \sum_i \Xb_i \right \rangle + \sum_i \langle \Zb_i + \epsilon^*\I , \Xb_i \rangle \\
    &= \epsilon^* \tr \left(\I - \sum_i \Xb_i \right) + \epsilon^* \sum_i \tr(\Xb_i)  = \epsilon^* \tr(\I) = \epsilon^* d.
\end{align*}
In other words, letting $p(\Ub)$ be the primal objective associated with $\Ub$ and
$d(\bmY,\bmZ,\bmnu)$ be the dual objective associated with $\bmY, \bmZ, \bmnu$, we have shown that the duality gap $d(\bmY,\bmZ,\bmnu) - p(\Ub) =
\epsilon^* d$, which implies $p(\Ub) = d(\bmY,\bmZ,\bmnu) - \epsilon^* d \ge d^*(\bmY,\bmZ,\bmnu) - \epsilon^* d = p^* - \epsilon^* d$.
\end{proof}

\section{Proofs of intermediate results supporting \texorpdfstring{\cref{thm:main_continuity}}{Theorem 4.7}} \label{sec:continuity}

\revise{Next, we give general convex analysis results that allow us to prove Theorem \ref{thm:main_continuity}.} 

Let $\Cc \subseteq \mathbb{R}^n$ be a closed, convex set. For all $\bmc \in
\Cc$, consider a primal-dual pair of linear conic programs parameterized
by $c$:

% \begin{align}
%     p(\bmc) &:= \min_{\bmx} \{\bmc' \bmx : \bmA \bmx = \bmb, \bmx \in \K\} \tag{$P; \bmc$} \label{equ:p} \\
%     d(\bmc) &:= \max_{\bmy} \{ \bmb' \bmy : \bmc - \bmA' \bmy \in \K^* \} \tag{$D; \bmc$} \label{equ:d}
% \end{align}

\begin{align}
    p(\bmc) &:= \min_{\bmx} \{\bmc' \bmx : \bmA \bmx = \bmb, \bmx \in \K\}  \label{equ:p} \tag{$P;\bmc$}\\
    d(\bmc) &:= \max_{\bmy} \{ \bmb' \bmy : \bmc - \bmA' \bmy \in \K^* \} \label{equ:d} \tag{$D;\bmc$}
\end{align}

\noindent Here, the data $\bmA \in \mathbb{R}^{m \times n}$ and $\bmb \in
\mathbb{R}^m$ are fixed; $\K \subseteq \mathbb{R}^n$ is a closed, convex
cone; and $\K^* := \{ \bms \in \mathbb{R}^n : \bms' \bmx \ge 0 \ \forall \ \bmx
\in \K \}$ is its polar dual. We imagine, in particular, that $\K$ is
a direct product of a nonnegative orthant, second-order cones, and
positive semidefinite cones, corresponding to linear, second-order-cone,
and semidefinite programming.

Define $\mathrm{Feas}(P) := \{ \bmx \in \K : \bmA \bmx = \bmb \}$ and $\mathrm{Feas}(D;\bmc)
:= \{ \bmy : \bmc - \bmA' \bmy \in \K^* \}$ to be the feasible sets of $(P;\bmc)$
and $(D;\bmc)$, respectively. We assume:

\begin{assumption} \label{ass:sets}
$\mathrm{Feas}(P)$ is interior feasible, and $\mathrm{Feas}(D;\bmc)$ is
interior feasible for all $\bmc \in \Cc$.
\end{assumption}

\noindent Then, for all $\bmc$, strong duality holds between $(P;\bmc)$ and
$(D;\bmc)$ in the sense that $p(\bmc) = d(\bmc)$ and both $p(\bmc)$ and $d(\bmc)$ are
attained in their respective problems. Accordingly, we also define
$$\mathrm{Opt}(D;\bmc) := \{ \bmy \in \mathrm{Feas}(D;\bmc) : \bmb' \bmy = d(\bmc) \}$$ to be
the nonempty, dual optimal solution set for each $\bmc \in \Cc$.

In addition, we assume the existence of linear constraints $\bmf - \bmE' \bmy
\ge 0$, independent of $\bmc$, such that $$\mathrm{Extra}(D) := \{ \bmy : \bmf - \bmE'
\bmy \ge 0 \}$$ satisfies:

\begin{assumption} \label{ass:extra}
For all $\bmc \in \Cc$, $\mathrm{Feas}(D;\bmc) \cap \mathrm{Extra}(D)$ is interior
feasible and bounded, and $\mathrm{Opt}(D;\bmc) \subseteq \mathrm{Extra}(D)$.
\end{assumption}

\noindent In words, irrespective of $\bmc$, the extra constraints $\bmf - \bmE'
\bmy \ge 0$ bound the dual feasible set without cutting off any optimal
solutions and while still maintaining interior, including interiority with respect
to $\bmf - \bmE' \bmy \ge 0$. Note also that Assumption \ref{ass:extra} implies
the recession cone of $\mathrm{Feas}(D;\bmc) \cap \mathrm{Extra}(D)$ is trivial
for (and independent of) all $\bmc$, i.e., $\{ \bmDelta \bmy : -\bmA' \bmDelta \bmy \in
\K^*, -\bmE' \bmDelta \bmy \ge 0 \} = \{0\}$.

We first prove a continuity result related to the dual feasible
set, in which we use the following definition of a convergent sequence
of bounded sets in Euclidean space: a sequence of bounded sets $\{ L^k
\}$ converges to a bounded set $\bar L$, written $\{ L^k \} \to \bar L$, if and only
if: (i) given any sequence $\{ \bmy^k \in L^k \}$, every limit point $\bar
\bmy$ of the sequence satisfies $\bar \bmy \in \bar L$; and (ii) every member
$\bar \bmy \in \bar L$ is the limit point of some sequence $\{ \bmy^k \in L^k
\}$.

\begin{lemma} \label{lem:dualsetconv}
Under Assumptions \ref{ass:sets} and \ref{ass:extra}, let $\{ \bmc^k
\in \Cc \} \to \cb$ be any convergent sequence. Then
\[
    \left\{ \mathrm{Feas}(D;\bmc^k) \cap \mathrm{Extra}(D) \right\}
    \ \ \to \ \
    \mathrm{Feas}(D;\cb) \cap \mathrm{Extra}(D).
\]
\end{lemma}

\begin{proof}
    See \cref{appendix:lemmaC1:proof} in the supplement for the proof.
\end{proof}

\begin{lemma} \label{lem:optsetconv}

Under Assumptions \ref{ass:sets} and \ref{ass:extra}, let $\{ \bmc^k
\in \Cc \} \to \cb$ be any convergent sequence. Then
\[
    \left\{ \mathrm{Opt}(D;\bmc^k) \right\}
    \ \ \to \ \
    \mathrm{Opt}(D;\cb).
\]
\end{lemma}

\begin{proof}
  See \cref{appendix:lemmaE2:proof} in the supplement for the proof.
\end{proof}

Finally, for given $\bmc \in \Cc$ and fixed $\bmy^0 \in \mathbb{R}^m$, we define
the function
\[
    y(\bmc) := y(\bmc;\bmy^0)
    = \argmin \{ \|\bmy - \bmy^0\| : \bmy \in \mathrm{Opt} (D;\bmc) \},
\]
i.e., $y(\bmc)$ equals the point in $\mathrm{Opt}(D;\bmc)$, which is closest
to $\bmy^0$. Since $\mathrm{Opt} (D;\bmc)$ is closed and convex, $y(\bmc)$ is
well defined. We next use \cref{lem:optsetconv} to show that $y(\bmc)$ is continuous in $\bmc$.

\begin{proposition} \label{pro:ycont}
Under the Assumptions \ref{ass:sets} and \ref{ass:extra}, given $\bmy^0
\in \mathbb{R}^m$,
the function $y(\bmc) := y(\bmc;\bmy^0)$ is continuous in $\bmc$.
\end{proposition}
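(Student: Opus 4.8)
The plan is to establish continuity of $y(\bmc)$ via a sequential argument: take an arbitrary convergent sequence $\{\bmc^k \in \Cc\} \to \bar\bmc$ and show $y(\bmc^k) \to y(\bar\bmc)$. The key enabling fact is Lemma \ref{lem:optsetconv}, which gives $\{\text{Opt}(D;\bmc^k)\} \to \text{Opt}(D;\bar\bmc)$ in the set-convergence sense defined before Lemma \ref{lem:dualsetconv}, together with the fact (from Assumption \ref{ass:extra}) that all these sets lie inside the fixed bounded set $\text{Extra}(D)$, so the sequence $\{y(\bmc^k)\}$ is automatically bounded and has limit points. I would then argue that every limit point of $\{y(\bmc^k)\}$ must equal $y(\bar\bmc)$; since the sequence is bounded, this forces $y(\bmc^k) \to y(\bar\bmc)$ and hence continuity at $\bar\bmc$, which was arbitrary.

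The core step is the identification of limit points. Let $\bar\bmy$ be any limit point of $\{y(\bmc^k)\}$, say $y(\bmc^{k_j}) \to \bar\bmy$ along a subsequence. First, by property (i) of set convergence applied to $\{\text{Opt}(D;\bmc^k)\} \to \text{Opt}(D;\bar\bmc)$, we get $\bar\bmy \in \text{Opt}(D;\bar\bmc)$, so $\bar\bmy$ is at least feasible-and-optimal for the limiting problem. It remains to show $\bar\bmy$ is the \emph{closest} such point to $\bmy^0$, i.e. $\|\bar\bmy - \bmy^0\| = \min\{\|\bmy - \bmy^0\| : \bmy \in \text{Opt}(D;\bar\bmc)\}$. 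To see this, let $\bmy^* := y(\bar\bmc)$ be that minimizer. By property (ii) of set convergence, there is a sequence $\bmw^k \in \text{Opt}(D;\bmc^k)$ with $\bmw^k \to \bmy^*$. Since $y(\bmc^k)$ minimizes distance to $\bmy^0$ over $\text{Opt}(D;\bmc^k)$, we have $\|y(\bmc^k) - \bmy^0\| \le \|\bmw^k - \bmy^0\|$ for every $k$. Passing to the limit along $k_j$ gives $\|\bar\bmy - \bmy^0\| \le \|\bmy^* - \bmy^0\|$. Combined with $\bar\bmy \in \text{Opt}(D;\bar\bmc)$ and the uniqueness of the projection of $\bmy^0$ onto the closed convex set $\text{Opt}(D;\bar\bmc)$, this yields $\bar\bmy = \bmy^*= y(\bar\bmc)$.

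I expect the main obstacle to be purely bookkeeping around the notion of set convergence: one must be careful that property (ii) indeed supplies, for the specific point $\bmy^* \in \text{Opt}(D;\bar\bmc)$, an \emph{honest} approximating sequence $\bmw^k \in \text{Opt}(D;\bmc^k)$ defined for all (large) $k$, not merely along a subsequence — this is exactly what the definition guarantees, but it should be invoked explicitly. A second, minor point to state carefully is that uniqueness of the Euclidean projection onto the nonempty closed convex set $\text{Opt}(D;\bar\bmc)$ is what turns the inequality $\|\bar\bmy-\bmy^0\|\le\|\bmy^*-\bmy^0\|$ plus membership into the equality $\bar\bmy = \bmy^*$; this is standard and relies on $\text{Opt}(D;\bar\bmc)$ being closed, convex, and nonempty, all of which hold under Assumptions \ref{ass:sets} and \ref{ass:extra}. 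No further estimates are needed — the whole argument is a soft topological consequence of Lemma \ref{lem:optsetconv} and the fixed bounding set $\text{Extra}(D)$.
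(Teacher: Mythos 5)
Your proof is correct and takes essentially the same approach as the paper, which disposes of the proposition in one line by citing Lemma~\ref{lem:optsetconv}; you simply fill in the standard details of why set convergence of $\text{Opt}(D;\bmc^k)$ to the closed convex set $\text{Opt}(D;\bar\bmc)$ forces convergence of the nearest-point maps, using properties (i) and (ii) of the convergence definition together with uniqueness of Euclidean projection. One small inaccuracy to fix: $\text{Extra}(D) = \{\bmy : \bmf - \bmE^T\bmy \ge 0\}$ is not itself bounded (in the paper's concrete instantiation it is a halfspace intersected with $\bmnu \ge 0$); boundedness of the sequence $\{y(\bmc^k)\}$ instead comes from $y(\bmc^k) \in \text{Opt}(D;\bmc^k) \subseteq \text{Feas}(D;\bmc^k)\cap\text{Extra}(D)$ together with the boundedness-of-sequences argument given explicitly at the start of the proof of Lemma~\ref{lem:dualsetconv}, which shows via the trivial recession cone that any sequence $\{\bmy^k \in L^k\}$ is bounded. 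With that citation swapped in, the argument is airtight.
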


\begin{proof}
We must show that, for any convergent $\{ \bmc^k \} \to \cb$, we also have convergence $\{ y(\bmc^k) \} \to y(\cb)$. This follows because $\{ \mathrm{Opt}(D;\bmc^k) \} \to \mathrm{Opt}(D;\cb)$ by \cref{lem:optsetconv}.
\end{proof}

Theorem \ref{thm:main_continuity} uses Proposition \ref{pro:ycont} in its proof. Here we discuss how the primal-dual pair \eqref{eq:primal_problem}-\eqref{eq:dual_problem} satisfy the assumptions for the proposition. We would like to establish conditions under which (\ref{eq:primal_problem}) has the rank-one property. For this, we apply the general theory developed
above, specifically Proposition \ref{pro:ycont}. To show that the general theory applies, we must define the closed,
convex set $\Cc$, which contains
the set of admissible objective matrices/coefficients $(\bmM_1,\ldots,\bmM_k)$ and which satisfies Assumptions \ref{ass:sets} and \ref{ass:extra}.
In particular, for a fixed, user-specified upper bound $\mu > 0$, we define $ \Cc :=
    \{ \bmc = (\bmM_1,\ldots,\bmM_k) : 0 \preceq \bmM_i \preceq \mu \bmI \quad \forall \ i = 1,\ldots,k \}$
% \[
%     \Cc :=
%     \{ \bmc = (\bmM_1,\ldots,\bmM_k) : 0 \preceq \bmM_i \preceq \mu \bmI \quad \forall \ i = 1,\ldots,k \},
% \]
to be our set of admissible coefficient $k$-tuples. 
\edit{In addition, we have shown in \cref{lem:nuneg} that all $\bmM_i \succeq 0$ implies that all $\nu_i$ are nonnegative at optimality. Thus, we enforce the redundant constraint that $\nu_i \geq 0$ for all $i \in [k]$.}

We know that both (\ref{eq:primal_problem}) and (\ref{eq:dual_problem}) have interior
points for all $\bmc \in \Cc$, so that strong duality holds. For the
dual in particular, the equation $\mu \bmI = \bmM_i + ((\mu + \epsilon) \bmI -
\bmM_i) - \epsilon \bmI$ shows that, for all $\epsilon > 0$, $ \bmY(\epsilon) := \mu \bmI, ~~ \bmZ(\epsilon)_i := (\mu + \epsilon) \bmI - \bmM_i, ~~\nu(\epsilon)_i := \epsilon$
% \[
%     \bmY(\epsilon) := \mu \bmI, \quad\quad \bmZ(\epsilon)_i := (\mu + \epsilon) \bmI - \bmM_i, \quad\quad \nu(\epsilon)_i := \epsilon
% \]
is interior feasible with objective value $d \mu + k \epsilon$. In
particular, the redundant constraint $\bmnu \geq 0$ is satisfied strictly.
This verifies Assumption \ref{ass:sets}.

We next verify Assumption \ref{ass:extra}. Since the objective value
just mentioned is independent of $\bmc = (\bmM_1,\ldots,\bmM_k)$, we can take
$\epsilon = 1$ and enforce the extra constraint $\tr(\bmY) + \sum_{i=1}^k
\nu_i \le d \mu + k$ without cutting off any dual optimal solutions
and while still maintaining interior. In particular, the solution
$(\bmY(\tfrac12), \bmZ(\tfrac12)_i, \bmnu(\tfrac12)_i)$ corresponding to
$\epsilon = \tfrac12$ satisfies the new, extra constraint strictly.
Finally, note that $\tr(\bmY) + \sum_i \nu_i \le d \mu + k$ bounds $\bmY$ and
$\bmnu$ in the presence of the constraints $\bmY \succeq 0$ and $\bmnu \ge 0$,
and consequently the constraint $\bmZ_i = \bmY - \bmM_i + \nu_i \bmI$ bounds $\bmZ_i$
for each $i$.

We now repeat the discussion leading up to Theorem \ref{thm:main_continuity} for completeness. The first lemma says that the diagonal problem has dual variables $\bmZ_i$ such that $\rank(\bmZ_i) \ge d-1$, implying that the primal variables $\bmX_i$ are rank-one.

\begin{proof}[Proof of \cref{lem:gt}]
Because of the jointly diagonalizable property, we may assume without loss of generality that each $\bmM_i$ is diagonal. So (\ref{eq:primal_problem}) is equivalent to the
assignment LP
\[
    \max \left\{ \sum_{i=1}^k \diag(\bmM_i)' \diag(\bmX_i) :
        \begin{array}{l}
            \bme' \diag(\bmX_i) = 1, \ \diag(\bmX_i) \ge 0 \ \ \forall \ i=1,\ldots,k \\
            \sum_{i=1}^k \diag(\bmX_i) \le \bme
        \end{array}
    \right\},
\]
where $\bme$ is the vector of all ones, and (\ref{eq:dual_problem}) is equivalent to the LP
\[
    \min \left\{
        \bme' \diag(\bmY) + \sum_{i=1}^k \nu_i :
        \begin{array}{l}
            \diag(\bmY) = \diag(\bmM_i) + \diag(\bmZ_i) - \nu_i \bme \ \ \forall \ i = 1,\ldots,k \\
            \diag(\bmZ_i) \ge 0 \ \ \forall \ i = 1,\ldots,k, \quad \diag(\bmY) \ge 0 \\
        \end{array}
    \right\}.
\]

Since the primal is an assignment problem, its unique optimal solution
has the property that each $\diag(\bmX_i)$ is a standard basis vector
(i.e., each has a single entry equal to 1 and all other entries equal to
0). By the Goldman-Tucker strict complementarity theorem for LP, there
exists an optimal primal-dual pair such that $\diag(\bmX_i) + \diag(\bmZ_i)
> 0$ for each $i$. Hence, there exists a dual optimal solution with
$\rank(\bmZ_i) \ge d-1$ for each $i$, as desired.
\end{proof}

\begin{proof}[Proof of \cref{cor:Mi_commute:shq}]
\sloppypar{
% The general result follows from directly applying \cref{lem:lins_thm} to each $\bmM_i$, and for the instance of problem \cref{eq:origobj}, we apply \cref{lem:lins_thm} to each $\bmA_\ell$.
\revise{We apply \cref{lem:hilbert_schmidt_lins_thm} to $(\bmA_1,\hdots,\bmA_L)$.
Then there exist Hermitian symmetric matrices $\bar \bmA_\ell$ such that $\|[\bar \bmA_\ell, \bar \bmA_m]\|_\mathrm{tr} = 0$ for all $\ell, m \in [L]$ such that $\|\bmA_\ell - \bar \bmA_\ell\|_\mathrm{tr} \leq \delta (\epsilon,k)$ for all $\ell \in [L]$.  Let $\Mb_i := \sum_{\ell=1}^L w_{\ell,i} \bar \bmA_{\ell}$.} Then the matrices $\Mb_i$ commute and are jointly diagonalizable:}
        \begin{align}
            [\Mb_i, \Mb_j] = \Mb_i \Mb_j -  \Mb_j \Mb_i &=  2\sum_{\ell \neq m}^L w_{\ell,i}  w_{m,j}( \bar \bmA_\ell \bar \bmA_m - \bar \bmA_m \bar \bmA_\ell) = 0.
        \end{align}
    Now we measure the distance between each $\bmM_i$ and $\Mb_i$:
    \begin{align}
        \|\bmM_i - \Mb_i\|_\mathrm{tr} = \left \| \sum_{\ell=1}^L w_{\ell,i} (\bmA_\ell - \bar \bmA_\ell) \right\|_\mathrm{tr} \leq \sum_{\ell=1}^L w_{\ell,i} \|\bmA_\ell - \bar \bmA_\ell \|_\mathrm{tr} \leq \sum_{\ell=1}^L w_{\ell,i}\delta(\epsilon,k).
    \end{align}
\end{proof}

\edit{The following lemma is used in the proof of \cref{prop:hppca:almost_commute}.}
\begin{lemma}
\label{lem:hppca:commute:sample}
    Let $\Mb_i := \mathbb{E}[{\frac{1}{n}}\bmM_i] \in \bbR^{d \times d}$, where the expectation is taken with respect to the {normalized} data observations, and let $C > 0$ be a universal constant.
    % Normalize each $\bmM_i$ by $1/n$.
    Then $\|[\Mb_i, \Mb_j ]\| = 0$, and with probability at least $1 - e^{-t}$ for $ t > 0$
    \begin{align}
        \frac{\|{\frac{1}{n}}\bmM_i - \Mb_i \|}{\|\Mb_1\|} \leq 
        C \frac{\bar {\sigma}_i}{\bar{\sigma}_1} \max\left\{\sqrt{\frac{\frac{\bar{\xi}_i}{\bar{\sigma}_i} \log d  + t}{n}}, \frac{\frac{\bar{\xi}_i}{\bar{\sigma}_i}\log d + t}{n}\log(n) \right \},~~\text{where}
    \end{align}
    \begin{align*}
    \bar{\sigma}_i &= \|\Mb_i \| = \sum_{\ell=1}^L \frac{\frac{\lambda_i}{v_\ell}}{\frac{\lambda_i}{v_\ell} + 1} \frac{n_\ell}{n} \left(\frac{\lambda_1}{v_\ell} + 1\right),\\
    \bar{\xi}_i &= \tr(\Mb_i) = \sum_{\ell=1}^L \frac{\frac{\lambda_i}{v_\ell}}{\frac{\lambda_i}{v_\ell} + 1}\frac{n_\ell}{n} \left(\frac{1}{v_\ell} \sum_{j=1}^k \lambda_j + d\right).
    \end{align*}
\end{lemma}
\begin{proof}
    \sloppypar{Let $\tilde{\bmy}_{\ell,j} := \sqrt{\frac{w_{\ell,i}}{v_\ell}} \bmy_{\ell,j}$ be a rescaling of the data vectors. Then $\tilde{\bmy}_{\ell,j} \overset{iid}{\sim} \mathcal{N}(\bm{0}, w_{\ell,i}(\frac{1}{v_\ell} \bmU \bmTheta^2 \bmU' + \bmI))$. After rescaling, for notational purposes let $\bmM_i = \frac{1}{n} \sum_{\ell=1}^L \sum_{j=1}^{n_\ell} \tilde{\bmy}_{\ell,j}\tilde{\bmy}_{\ell,j}'$. Taking the expectation over the data, we have}
    \begin{align}
        \mathbb{E}[\bmM_i] = \frac{1}{n} \sum_{\ell=1}^L \sum_{j=1}^{n_\ell} \mathbb{E}[\tilde{\bmy}_{\ell,j}\tilde{\bmy}_{\ell,j}'] = \sum_{\ell=1}^L w_{\ell,i}\frac{n_\ell}{n} \left(\frac{1}{v_\ell} \bmU \bmTheta^2 \bmU' + \bmI\right). 
    \end{align}
    Let $\bmU_\perp \in \bbR^{d \times d-k}$ be an orthonormal basis spanning the orthogonal complement of $\mathrm{Span}(\bmU)$. Noting that $\bmI = \bmU \bmU' + \bmU_\perp \bmU_\perp'$, rewrite $\mathbb{E}[\bmM_i]$ in terms of its eigendecomposition by
    \begin{align}
        \mathbb{E}[\bmM_i] &= \bmU \left(\sum_{\ell=1}^L w_{\ell,i}\frac{n_\ell}{n}\left( \frac{1}{v_\ell} \bmTheta^2 + \bmI_k\right)\right) \bmU' + \left(\sum_{\ell=1}^L w_{\ell,i}\frac{n_\ell}{n}\right) \bmU_\perp \bmU_\perp'\\
        &= \begin{bmatrix} \bmU & \bmU_\perp \end{bmatrix} \begin{bmatrix} \edit{\bmSigma_i} & 0 \\ 0 & \edit{\gamma_i} \bmI_{d-k} \end{bmatrix} \begin{bmatrix} \bmU' \\ \bmU_\perp'\end{bmatrix},
    \end{align}
    where $\edit{\bmSigma_i} := \sum_{\ell=1}^L w_{\ell,i}\frac{n_\ell}{n}\left( \frac{1}{v_\ell} \bmTheta^2 + \bmI_k\right) $ and $\edit{\gamma_i} := \sum_{\ell=1}^L w_{\ell,i}\frac{n_\ell}{n}$, from which we obtain the expressions for $\bar{\sigma}_i = \|\mathbb{E}[\bmM_i]\|$ and $\bar{\xi}_i = \tr(\mathbb{E}[\bmM_i])$.
    Then invoking \cref{lem:sample_covar_error_bound} in the supplement to bound the concentration of a {normalized} sample covariance matrix to its expectation with high probability yields the final result.
\end{proof}

\begin{proof}[Proof of \cref{prop:hppca:almost_commute}]
    
    We argue there are two possible sets of commuting $(\Mb_1,\hdots,\Mb_k)$ that $(\bmM_1,\hdots,\bmM_k)$ can converge to, depending on the signal to noise ratios $\frac{\lambda_i}{v_\ell}$ and the number of samples $n$.
        
        Consider that we can scale all the $\bmM_i$ in \cref{eq:primal_problem} by a positive scalar constant without changing the optimal solution. Since all the $\bmM_i$ can be arbitrarily scaled in this manner, and thereby changing any distance measure, we will choose to normalize the matrices $\bmM_i$ and $\Mb_i$ by the number of samples and the largest spectral norm of the $\Mb_i$, which is equivalent to also normalizing the distance. \new{Using the definition of the weights $w_{\ell,i}$ in HPPCA, it is straightforward to show that $\bmM_1 \succeq \bmM_2 \succeq \cdots \succeq \bmM_k$.}
        % For the HPPCA application, since $\Mb_1 \succeq \Mb_2 \hdots \succeq \Mb_k $,
        Accordingly,
        we normalize by $1/\|n \Mb_1\|$.
        
        First, if the variances are zero or all the same, i.e. noiseless or homoscedastic noisy data, then all the $\bmM_i$ are equal. Otherwise, in the case where each SNR $\lambda_i/v_\ell$ of the $i^{\text{th}}$ components is large or close to the same value for all $\ell \in [L]$, the weights $w_{\ell,i} = \frac{\lambda_i/v_\ell}{\lambda_i/v_\ell + 1}$ are very close to 1 or some constant less than 1, respectively. Therefore, let  $\Mb := \frac{1}{n}\sum_{\ell=1}^L {\bar{v}} \bmA_\ell$ for some {$\bar{v} \geq 0$} for all $i \in [k]$, where recall from \cref{eq:hppca:generative_model} that $\bmA_\ell = \sum_{j=1}^{n_\ell} \frac{1}{v_\ell}\bmy_{\ell,j} \bmy_{\ell,j}'$. Then
    \begin{align}
    \label{eq:hppca:almost_commute:variance}
        % \frac{\|\textcolor{blue}{\frac{1}{n}} \bmM_i - \bar \bmM\|}{\| \bar \bmM\|} = \frac{\left\|\sum_{\ell=1}^L (w_{\ell,i} - \bar{v}) \bmA_\ell \right\|}{\|\sum_{\ell=1}^L \bmA_\ell\|} \leq \frac{\sum_{\ell=1}^L \frac{1}{\frac{\lambda_i}{v_\ell} + 1} \|\bmA_\ell\|}{\|\sum_{\ell=1}^L \bmA_\ell\|} \leq \sum_{\ell=1}^L \frac{1}{\frac{\lambda_i}{v_\ell} + 1},
        \frac{\|\textcolor{black}{\frac{1}{n}} \bmM_i - \Mb\|}{\| \Mb\|} = \frac{\frac{\lambda_i}{\lambda_i + \bar{v}} \|\sum_{\ell=1}^L \frac{(\bar{v} - v_\ell)/v_\ell}{\lambda_i/v_\ell + 1} \bmA_\ell \|}{\frac{\lambda_i}{\lambda_i + \bar{v}} \sum_{\ell=1}^L \|\bmA_\ell\|} \leq \frac{\sum_{\ell=1}^L \frac{|\bar{v} - v_\ell|/v_\ell}{\lambda_i/v_\ell + 1} \|\bmA_\ell\|}{\|\sum_{\ell=1}^L \bmA_\ell\|} \leq \sum_{\ell=1}^L \frac{\frac{|\bar{v}- v_\ell|}{v_\ell}}{\frac{\lambda_i}{v_\ell} + 1},
    \end{align}
    \noindent where the last inequality above results from the fact $\frac{\|\bmA_\ell\|}{\|\sum_{\ell=1}^L \bmA_\ell\|} \leq 1$ for all $\ell \in [L]$ using Weyl's inequality for symmetric PSD matrices \cite{horn_johnson:85}.
    % \cite{weyl_inequality}
    While the bound above depends on the SNR {and the gaps between the variances}, it fails to capture the effects of the sample sizes, which also play an important role in how close the $\bmM_i$ are to commuting. Even in the case where the variances are larger and more heterogeneous, since the $\bmM_i$ form a weighted sum of sample covariance matrices, given enough samples, they should concentrate to their respective sample covariance matrices, which commute between $i,j \in [k]$. We show exactly this using the concentration of sample covariances to their expectation in \cite{Lounici2014}, and choose $\cb = (\Mb_1, \hdots, \Mb_k)$ for $\Mb_i := \mathbb{E}[{\frac{1}{n}}\bmM_i]$, where the expectation here is with respect to the {normalized} data generated by the model in \cref{eq:hppca:generative_model}.
    
    Let $\Mb_i := \mathbb{E}[{\frac{1}{n}}\bmM_i] \in \bbR^{d \times d}$, where the expectation is taken with respect to the {normalized} data observations. Then by \cref{lem:hppca:commute:sample} and taking the minimum with \cref{eq:hppca:almost_commute:variance}, we obtain the final result.
    \end{proof}

\printbibliography
\begin{refsection}
\section{Related work}
\label{appendix:related}
% \section{Related Work}
% \label{appendix:related}
In this extended related work discussion, we first describe works very closely related to our problem in \cref{eq:sum_heterogeneous_quadratics}, and then describe works more generally related to SDP relaxations of rank or orthogonality constrained problems. 

\cite{supp:bolla:98}, \cite{supp:rapcsak2002minimization}, and \cite{supp:Berezovskyi2008} also previously investigated the sum of heterogeneous quadratic forms in \cref{eq:sum_heterogeneous_quadratics}. The work in \cite{supp:bolla:98} only studied the structure of this problem for some special cases where all of the matrices $\bmM_i$ were either equal, diagonal, or commuting. \cite{supp:rapcsak2002minimization} derived sufficient second-order global optimality conditions for the Hessian of the Lagrangian. However, these conditions, in general, do not always hold and are usually difficult to check in practice since they require computing the eigenvalues of the large $kd \times kd$ Hessian matrix. \cite{supp:Berezovskyi2008} also analyzed the Lagrangian of the problem, but only for the case of Boolean problem variables.

Works such as \cite{supp:huang2009rank} and \cite{supp:Pataki1998OnTR} consider a very similar problem to \eqref{eq:sum_heterogeneous_quadratics}, but without the constraint summing the $\bmX_i$ in \eqref{eq:ncvxprimal}, making their SDP a rank-constrained separable SDP; see also \cite[Section 4.3]{supp:luo2010sdp}. {Pataki \cite{supp:Pataki1998OnTR} studied upper bounds on the rank of optimal solutions of general SDPs, but in the case of \cref{eq:primal_problem}, since our problem introduces the additional constraint summing the $\bmX_i$, Pataki's bounds do not guarantee rank-1, or even low-rank, optimal solutions.}

Our problem also has interesting connections to the well-studied problem in the literature of approximate joint diagonalization (AJD), which is often applied to blind source separation or independent component analysis (ICA) problems \cite{supp:theis_ica_jd, supp:bouchard_malick_congedo2018, supp:kleinsteuber_shen_2013, supp:afsari_jd2008, supp:Shi2011}. Given a set of symmetric PSD matrices that represent second order data statistics, one seeks the matrix, usually constrained to lie in the set of orthogonal or invertible matrices, that jointly diagonalizes the set of matrices optimally, albeit approximately. When all matrices in the set commute, the diagonalizer is simply the shared eigenspace, but often in practice, due to noise, finite samples, or numerical errors, the set does not commute and can only be approximately diagonalized.

Expanding our matrix variable $\bmU \in \bbR^{d \times k}$ to a full basis $\bmU \in \bbR^{d \times d}$, the heteroscedastic probabilistic PCA (HPPCA) problem in \cref{eq:origobj} is equivalent to
\begin{align}
    \label{eq:hppca_frobnorm}
    \min_{\bmU \in \R^{d \times d}: \bmU'\bmU = \bmU \bmU' = \bmI} \sum_{\ell=1}^L \frac{1}{2}\|\bmU' \bmA_\ell \bmU - \bmW_\ell\|_{\mathrm{F}}^2 + C,
\end{align}
where $\bmW_\ell = \mathrm{diag}(w_{\ell,1},\hdots,w_{\ell,k},0,\hdots,0) \succeq 0$, and $C$ is a constant with respect to $\bmU$. The objective functions in \cite[Equation 4]{supp:pham:hal-00371941} and \cite[Equation 8]{supp:bouchard_ajd2019} bear great similarity to \cref{eq:hppca_frobnorm}.
\edit{However, in AJD, the diagonal matrices $\bmW_\ell$ depend on $\bmU$ and are optimization variables, whereas in \cref{eq:hppca_frobnorm} they are considered fixed and known \textit{a priori}.}
% with the difference being that the diagonal matrix $\bmW_\ell$ above is fixed and not a function of $\bmU$ in our problem, making it a distinct problem from AJD.
% However, if the diagonal matrices in \cite[Equation 4]{supp:pham:hal-00371941} and \cite[Equation 8]{supp:bouchard_ajd2019} are fixed, then AJD simplifies to \cref{eq:hppca_frobnorm}. 
Accordingly, problems \cref{eq:origobj} and \cref{eq:hppca_frobnorm} can be loosely interpreted as finding the $\bmU$ that best approximately jointly diagonalizes the data second-order statistics $\bmA_\ell$ to each $\bmW_\ell$. The AJD literature often employs Riemannian manifold optimization to solve the chosen objective function iteratively. To the best of our knowledge, no work has yet shown an analytical solution beyond the case when all the matrices commute nor proven global optimality criteria for these nonconvex programs.

\sloppypar The works in \cite{supp:boumal_nips2016, supp:Pumir2018SmoothedAO} \textcolor{black}{show nonconvex Burer–Monteiro factorizations \cite{supp:burer2003nonlinear} to solve low-rank SDPs have no spurious local minima and that approximate second-order stationary points are approximate global optima,} but these are distinct from our problem in which the columns of the orthonormal basis are constrained together in \eqref{eq:ncvxprimal}. Other works have studied optimizers to the nonconvex problem, like those in \cite{supp:breloyMMStiefel2021,supp:breloyRobustCovarianceHetero2016,supp:SunBreloy2016,supp:BreloyClutterSubspace2015}, using minorize-maximize or Riemannian gradient ascent algorithms. While efficient and scalable, these methods do not have global optimality guarantees beyond proof of convergence to a critical point.
Recent works have also studied convex relaxations of PCA and other low-rank subspace problems that bound the eigenvalues of a single matrix  \cite{supp:vu2013fantope,supp:Morgenstern2019, supp:won2021orthogonalSiamJMAA}, rather than the sum of multiple matrices as in our setting.
\cite{supp:won2021orthogonalSiamJMAA, supp:won2022orthogonal} study the SDP relaxation of maximizing the sum of traces of matrix quadratic forms on a product of Stiefel manifolds using the Fantope and propose a global optimality certificate. We emphasize their problem pertains to optimizing a trace sum over multiple orthonormal bases, each on a different Stiefel manifold, whereas our problem optimizes over the columns of a single basis on the Stiefel and is distinct from theirs. Extending the theory of the dual certificate from \cite{supp:kyfan} to the orthogonal trace maximization problem, the work in \cite{supp:won2021orthogonalSiamJMAA} proposes a simple way to test the global optimality of a given stationary point from an iterative solver of the nonconvex problem. Then in \cite{supp:won2021orthogonalSiamJMAA}, the same authors prove that for an additive noise model with small noise, their SDP relaxation is tight, and the solution of the nonconvex problem is globally optimal with high probability.

{
Many works study SDP relaxations of low-rank problems without Fantope constraints, a few of which we highlight here. The works in \cite{supp:journee2010low,supp:boumal2020deterministic,supp:bandeira2017tightness} \textcolor{black}{study the use of Burer-Monteiro factorizations to solve SDPs for}
optimization problems with multiple linear constraints. From the local properties of candidate solutions, they devise dual certificates to check for global optimality. \cite{supp:zhou2021conditions,supp:burer2005local} show for low-rank SDPs with rank-$r$ and $m$ linear constraints, no spurious local minima exist if $(r+1)(r+2)/2 > m + 1$; \cite{supp:burer2005local} also proves convergence of the nonconvex Burer-Monteiro factorization to the optimal SDP solution, with \cite{supp:cifuentes2022polynomial} strengthening this result, showing such algorithms converge provably in polynomial time, given that $r \gtrsim \sqrt{2(1 + \eta)m}$ for any fixed constant $\eta > 0$.

Similar to our work, the authors in \cite{supp:podosinnikova2019overcomplete} seek to recover multiple rank-one matrices, but for the overcomplete ICA problem. They solve separate SDP relaxations for each atom of the dictionary, using a deflation method to find the atoms in succession. In contrast, our work estimates all of the rank-one matrices simultaneously and requires that their first principal components form an orthonormal basis, whereas the dictionary atoms in ICA are only constrained to be unit-norm. 

}
% \newpage
\section{Supplement to \texorpdfstring{\cref{thm:dual_certificate}}{Theorem 4.1}}
\subsection{Derivation of \texorpdfstring{\cref{eq:dual_problem}}{(SDP-D}}
\label{appendix:dual:derivation}

% \subsection{Derivation of \cref{eq:dual_problem}}
The Lagrangian function of \cref{eq:primal_problem}, with dual variables $\bmnu \in \R^k, \bmY \in \mathbb{S}^d_+, \bmZ_i \in \mathbb{S}^d_+$ for $i=1,\dots,k$, is
\begin{align}
    &\clL(\bmX_i, \bmnu, \bmY, \bmZ_i) = \\ &-\tr\left(\sum_{i=1}^k \bmM_i \bmX_i\right) - \sum_{i=1}^k \nu_i(1 - \tr(\bmX_i)) - \tr\left(\bmY\left(\bmI - \sum_{i=1}^k \bmX_i \right)\right) - \sum_{i=1}^k \tr(\bmZ_i \bmX_i), \nonumber
\end{align}
for which the dual function is
\begin{align}
\begin{split}
    g(\bmY, \bmZ_i, \bmnu) &= \inf_{\bmX_i} \clL(\bmX_i, \bmnu, \bmY, \bmZ_i) \\
    &= \begin{cases} -\tr(\bmY) - \sum_{i=1}^k \nu_i & \text{s.t. }\bmY = \bmM_i + \bmZ_i - \nu_i \bmI \quad \forall i\in[k] \\
    -\infty & \text{otherwise.}\end{cases}
\end{split}
\end{align}
This yields the dual problem
\begin{align}
    \max_{\bmY,\bmZ_i,\bmnu} g(\bmY, \bmZ_i, \bmnu) \quad \text{s.t.}~~\bmY \succeq 0,~~ \bmZ_i \succeq 0,~~\bmY = \bmM_i + \bmZ_i - \nu_i \bmI, \quad \forall i\in[k].
\end{align}

\begin{lemma}
    \label{lem:stationary_conditions}
    Let $\clF(\bmU)$ denote the objective function with respect to $\bmU$ in \cref{eq:sum_heterogeneous_quadratics} over $\mathrm{St}(k,d)$. If a point $\bmU \in \mathrm{St}(k,d)$ is a {second-order stationary point (SOSP)} of $\clF$, then
    \begin{align*}
     \bmLambda &= \bmU'\sum_{i=1}^k \bmM_i \bmU \bmE_i ~~\text{is symmetric},~\text{and} \\
    -\sum_{i=1}^k \langle  \dot{\bmU}, \bmM_i \dot{\bmU} \bmE_i \rangle &+ \langle \dot{\bmU} \bmLambda, \dot{\bmU} \rangle \geq 0 \quad \forall \dot{\bmU} \in \bbR^{d \times k} ~\text{such that}~ \dot{\bmU}' \bmU + \bmU' \dot{\bmU} = 0.
\end{align*}
\end{lemma}

\begin{proof}
    Taking $\bar \clF$ to be the quadratic function in \cref{eq:sum_heterogeneous_quadratics} {(scaled by $\frac{1}{2}$)} over Euclidean space, the Euclidean gradient $\bm{\nabla} \bar \clF(\bmU) =  \sum_{i=1}^k \bmM_i \bmU \bmE_i = \begin{bmatrix}\bmM_1 \bmu_1 & \cdots & \bmM_k \bmu_k \end{bmatrix}$, where $\bmE_i := \bme_i \bme_i' \in \bbR^{k \times k}$ and $\bme_i$ is the $i^{th}$ standard basis vector in $\bbR^{k}$. The Euclidean Hessian can also easily be derived as $\bm{\nabla}^2 \bar \clF(\bmU)[\dot{\bmU}] = \sum_{i=1}^k \bmM_i \dot{\bmU} \bmE_i$. Restricting $\bar \clF$ to the Stiefel manifold, let $\clF := \bar \clF |_{\mathrm{St}(k,d)}$. If $\bmU \in \mathrm{St}(k,d)$ is a {SOSP} of \cref{eq:sum_heterogeneous_quadratics}, then
    \begin{align}
        \bm{\nabla} \clF(\bmU) = 0 \quad \text{and} \quad \bm{\nabla}^2 \clF(\bmU) \preceq 0,
    \end{align}
    where $\bm{\nabla} \clF$ and $\bm{\nabla}^2 \clF$ denote the Riemannian gradient and Hessian of $\clF$, respectively. 
    
    Let $\mathrm{sym}(\bmA) = \frac{1}{2}(\bmA + \bmA')$, $\mathrm{skew}(\bmA) = \frac{1}{2}(\bmA - \bmA')$, and $[\bmA, \bmB] = \bmA \bmB - \bmB \bmA$. From \cite{supp:AbsMahSep2008}, the gradient on the manifold for {SOSPs} $\bmU$ satisfies
    \begin{align}
        \bm{\nabla} \clF = \bm{\nabla} \bar \clF - \bmU \mathrm{sym}(\bmU' \bm{\nabla} \bar \clF(\bmU)) &= (\bmI - \bmU \bmU') \bm{\nabla} \bar \clF + \bmU \mathrm{skew}(\bmU' \bm{\nabla} \bar \clF) ,\\
        &= (\bmI - \bmU \bmU') \sum_{i=1}^k \bmM_i \bmU \bmE_i + {\frac{1}{2}} \bmU \sum_{i=1}^k [\bmU' \bmM_i \bmU, \bmE_i] \label{eq:stiefel_grad}\\
        &= 0,
    \end{align}
    We note the left and right expressions of the Riemannian gradient in \cref{eq:stiefel_grad} lie in the orthogonal complement of $\mathrm{Span}(\bmU)$ and the $\mathrm{Span}(\bmU)$, respectively, so $\bm{\nabla} \clF$ vanishes if and only if $(\bmI - \bmU \bmU')\bm{\nabla} \bar \clF = 0$, and $\sum_{i=1}^k [\bmU' \bmM_i \bmU, \bmE_i]  = 0$, implying $\bmU' \bm{\nabla} \bar \clF = \bm{\nabla} \bar \clF' \bmU$. Letting  $\bmLambda := \mathrm{sym}(\bmU'\bm{\nabla} \bar \clF)$, this also implies 
    \begin{align}
        \bmU \bmLambda = \bm{\nabla} \bar \clF = \sum_{i=1}^k \bmM_i \bmU \bmE_i,
    \end{align}
    and multiplying both sides by $\bmU'$ yields the expression for $\bmLambda$, which is symmetric as shown above so we can drop the $\mathrm{sym}(\cdot)$ operator.
    
    It can be shown the Riemannian Hessian is negative semidefinite if and only if
    \begin{align}
        \langle \dot{\bmU}, \bm{\nabla}^2 \bar \clF(\bmU)[\dot{\bmU}] - \dot{\bmU} \bmLambda \rangle \leq 0
    \end{align}
    for all $\dot{\bmU} \in T_{\bmU} \mathrm{St}(d,k)$, where $ T_{\bmU} \mathrm{St}(d,k)$ is the tangent space of the Stiefel manifold, i.e. the set $T_{\bmU} \mathrm{St}(d,k) = \{ \dot{\bmU} \in \bbR^{d \times k} : \bmU' \dot{\bmU} + \dot{\bmU}' \bmU = 0\}$. Plugging in the expressions for $\bmLambda$ and the Hessian of $\bar \clF$ yield the main result.

    \vspace{-\belowdisplayskip}\[\]
\end{proof}

The following lemma is adapted from \cite[Corollary 4.2]{supp:bolla:98}
\begin{lemma}
\label{lem:lambda:psd}
    Let $\Ub \in \bbR^{d \times k}$ be a {second-order stationary point} of \cref{eq:sum_heterogeneous_quadratics} and {$\bmM_i \succeq 0$ for all $i \in [k]$}. Then $\Lambdab = \Ub' \sum_{i=1}^k \bmM_i \Ub \bmE_i$ is positive semidefinite. 
\end{lemma}
\begin{proof}
    Since $k < d$, there exists a unit vector $\bmz$ in the span of $\Ub_\perp \in \bbR^{d \times d-k}$ where $\Ub'\Ub_\perp = 0$. Let $\bma = [a_1,\hdots,a_k]' \in \bbR^k$ be an arbitrary nonzero vector. Let $\dot{\bmU} := \bmz \bma'$, and let $\Lambdab = \sum_{i=1}^k \Ub' \bmM_i \Ub \bmE_i$. Then clearly $\Ub' \dot{\bmU} = 0$, and $\Ub' \dot{\bmU} + \dot{\bmU}' \Ub = 0$, so the second-order stationary necessary condition in \cref{lem:stationary_conditions} applies:
    \begin{align}
        \bma' \Lambdab \bma = \langle \dot{\bmU} \Lambdab, \dot{\bmU} \rangle \geq \sum_{i=1}^k \langle  \dot{\bmU}, \bmM_i \dot{\bmU} \bmE_i \rangle = \sum_{i=1}^k (a_i)^2 \bmz' \bmM_i \bmz \geq 0.
    \end{align}
    Therefore, since $\bma' \Lambdab \bma \geq 0$ for arbitrary $\bma$, $\Lambdab$ is positive semidefinite.
\end{proof}
\subsection{Remarks on \texorpdfstring{\cref{thm:dual_certificate}}{Theorem 4.1}}
\label{appendix:dual_certificate_remarks}
{One may ask if there is an analytical way to verify the {dual variables $\Yb$ and $\Zb_i$} are PSD without computing the LMI feasibility problem in \cref{eq:dual_certificate}.} While it is possible to derive sufficient upper bounds on the feasible $\nubi_i$ {to guarantee $\Lambdab \succeq \D_{\nub}$ so that $\Yb \succeq 0$}, this is insufficient to certify {$\Zb_i \succeq 0$} based on these bounds alone. This is in contrast to \cite{supp:won2021orthogonalSiamJMAA}; their particular dual certificate matrix is monotone in the Lagrange multipliers (analogous to our $\nubi_i$), so it is sufficient to test the positive semidefiniteness of the certificate matrix using the analytical upper bounds. Let $\Ub_{\perp_i}$ denote an orthonormal basis for $\mathrm{Span}(\bmI - \ub_i \ub_i')$. Here, since $ \Zb_i = \Ub \,\Lambdab \, \Ub' - \sum_{j \neq i}^k \nubi_j \ub_j \ub_j' + \nubi_i \Ub_{\perp_i} \Ub_{\perp_i}' - \bmM_i,$ each $\Zb_i$ is monotone in $\nubi_i$ but not in $\nubi_j$ for $j \neq i$. Therefore, there is tension between inflating $\nubi_i$ and guaranteeing all the $\Zb_i$ are PSD. {As such, an analytical solution to check that {$\Lambdab \succeq \D_{\nub}$ and} the $\Zb_i$ are PSD remains unknown, requiring computation of the LMI feasibility problem in \cref{eq:dual_certificate}.}
\subsection{More details on arithmetic complexity} 
\label{appendix:complexity}

While SDP relaxations of nonconvex optimization problems can provide strong provable guarantees, their practicality is limited by the time and space required to solve them, particularly when using off-the-shelf interior-point solvers. Interior-point methods are provably polynomial-time, but in our case the number of floating point operations to solve \cref{eq:primal_problem} grows as $\mathcal{O}(d^3)$ \cite{supp:bentalnem}, which practically limits $d$ to be in the few hundreds.

On the other hand, the study of the SDP relaxation admits improved practical tools to transfer theoretical guarantees to the nonconvex setting, i.e., to investigate when the convex relaxation is tight, and if it is, when a candidate solution of the nonconvex problem is globally optimal. In comparison to the dual problem \cref{eq:dual_problem} (upon eliminating the variables $\bmZ_i$), the proposed global certificate significantly reduces the number of variables from $\mathcal{O}(d^2)$ to merely $k$ variables. Precisely, the total computational savings can be shown using \cite[Section 6.6.3]{supp:benTal_aharon_nemirovski2001}, for which  \cref{eq:dual_problem} scales in arithmetic complexity as $\mathcal{O}((kd)^{1/2} k d^6)$ floating point operations (flops) and the certificate scales by $\mathcal{O}((kd)^{1/2} k^2 d^3)$ flops, showing a substantial reduction by a factor of $\mathcal{O}(d^3/k)$ flops. Subsequently, an MM solver in \cite{supp:breloyMMStiefel2021} with a linear majorizer, whose cost is $\mathcal{O}(dk^2 + k^3)$ {per iteration}, combined with our global optimality certificate, is an obvious preference to solving the full SDP in \cref{eq:primal_problem} for large problems. {Given the global certificate tool in \cref{thm:dual_certificate}, if \cref{eq:sum_heterogeneous_quadratics} has a tight convex relaxation, we can reliably and cheaply certify the terminal output of a first-order solver with possibly fewer restarts and without resorting to heuristics in nonconvex optimization, which commonly entail computing many multiple algorithm runs from different initializations and taking the solution with the best objective value.}

\section{Intermediary results for \texorpdfstring{\cref{thm:main_continuity}}{Theorem 4.7} and its corollaries}

\subsection{Proof of Lemma \ref{lem:dualsetconv}}
\label{appendix:lemmaC1:proof}

\begin{proof}
For notational convenience, define $L^k := \mathrm{Feas}(D;\bmc^k)
\cap \mathrm{Extra}(D)$ and $\overline L := \mathrm{Feas}(D;\cb) \cap
\mathrm{Extra}(D)$. Note that $L^k$ and $\overline L$ are bounded with interior
by Assumption \ref{ass:extra}. We wish to show $\{ L^k \} \to \overline L$.

We first note that any sequence $\{ \bmy^k \in L^k \}$ must be bounded.
If not, then $\{ \Delta \bmy^k := \bmy^k / \|\bmy^k\| \}$ is a bounded sequence
satisfying
\[
    \| \Delta \bmy^k \| = 1, \quad \frac{\bmc^k}{\|\bmy^k\|} -\bmA' \Delta \bmy^k \in \K^*,
    \quad \frac{\bmf}{\|\bmy^k\|} - \bmE' \Delta \bmy^k \ge 0
\]
and hence has a limit point $\Delta \bmy$ satisfying
\[
    \Delta \bmy \ne 0, \quad -\bmA' \Delta \bmy \in \K^*, \quad -\bmE' \Delta \bmy \ge 0,
\]
but this is a contradiction by the discussion after the statement of
Assumption \ref{ass:extra}. We thus conclude that any sequence $\{\bmy^k
\in L^k\}$ has a limit point.

Appealing to the definition of the convergence of sets stated before the
lemma, we first let $\overline \bmy$ be a limit point of any $\{ \bmy^k \in
L^k \}$ and prove that $\overline \bmy \in \overline L$. Since
\[
    \bmc^k - \bmA' \bmy^k \in \K^*, \quad \bmf - \bmE' \bmy^k \ge 0
\]
for all $k$, by taking the limit of $\{ \bmc^k \}$ and $\{ \bmy^k \}$, we have
$\cb - \bmA' \overline \bmy \in \K^*$ and $\bmf - \bmE' \overline \bmy \ge 0$ so that
indeed $\overline \bmy \in \overline L$.

Next, we must show that every $\overline \bmy \in \overline L$ is the limit point of
some sequence $\{ \bmy^k \in L^k \}$. For this proof, define
\[
    \kappa(\overline \bmy) := \min \{ k : \overline \bmy \in L^\ell \ \ \forall \ \ell \ge k \},
\]
\sloppypar{\noindent i.e., $\kappa(\overline 
\bmy)$ is the smallest $k$ such that $\overline \bmy$ is a
member of every set in the tail $L^k, L^{k+1}, L^{k+2}, \ldots$. By
convention, if there exists no such $k$, we set $\kappa(\overline \bmy) =
\infty$.}

Let us first consider the case $\overline \bmy \in \mathrm{int}(\overline L)$. We claim
$\kappa(\overline \bmy) < \infty$, so that setting $\bmy^k = \overline \bmy$ for all $k \ge
\kappa(\overline \bmy)$ yields the desired sequence converging to $\overline \bmy$. Indeed,
as $\overline \bmy$ satisfies $\cb - \bmA' \overline \bmy \in \mathrm{int}(\K^*)$ and
$\bmf - \bmE' \overline \bmy > 0$, the equation
\[
    \bmc^k - \bmA' \overline \bmy = \left( \cb - \bmA' \overline \bmy \right) + \left( \bmc^k - \cb \right)
\]
shows that $\{ \bmc^k - \bmA' \overline \bmy \}$ equals $\cb - \bmA' \overline \bmy \in
\mathrm{int}(\K^*)$ plus the vanishing sequence $\{ \bmc^k - \cb \}$.
Hence its tail is contained in $\mathrm{int}(\K^*)$, thus proving
$\kappa(\overline \bmy) < \infty$, as desired.

Now we consider the case $\overline \bmy \in \mathrm{bd}(\overline L)$. Let $\bmy^0 \in
\mathrm{int}(\overline L)$ be arbitrary, so that $\kappa(\bmy^0) < \infty$ by
the previous paragraph. For a second index $\ell = 1,2,\ldots$, define
$\bmz^\ell := (1/\ell) \bmy^0 + (1 - 1/\ell) \overline \bmy \in \mathrm{int}(\overline L)$.
Clearly, $\kappa(\bmz^\ell) < \infty$ for all $\ell$ and $\{\bmz^\ell\} \to
\overline \bmy$. We then construct the desired sequence $\{ \bmy^k \in L^k \}$
converging to $\overline \bmy$ as follows. First, set
\begin{align*}
    k_1 &:= \kappa(\bmz^1) = \kappa(\bmy^0) \\
    k_\ell &:= \max \{ k_{\ell - 1} + 1, \kappa(\bmz^\ell) \} \ \ \forall \ \ell = 2, 3, \ldots
\end{align*}
and then, for all $\ell$ and for all $k \in [k_\ell, k_{\ell + 1} - 1]$, define
$\bmy^k := \bmz^\ell$. Essentially, $\{ \bmy^k \}$ is the sequence $\{ \bmz^\ell \}$, except
with entries repeated to ensure $\bmy^k$ is in fact a member of $L^k$ for all $k$.
Hence, $\{ \bmy^k \}$ converges to $\overline \bmy$ as desired.
\end{proof}
\subsection{Proof of Lemma \ref{lem:optsetconv}}
\label{appendix:lemmaE2:proof}
\subsubsection{Setup}

Let linearly independent matrices $\bmA_1, \ldots, \bmA_m \in \SSS^n$ be
given, and define the linear function $\mathcal{A} : \SSS^n \to \mathbb{R}^m$ by
\[
    \mathcal{A}(\bmX) = \begin{pmatrix} \bmA_1 \bullet \bmX \\ \vdots \\ \bmA_m \bullet \bmX \end{pmatrix}.
\]
We consider the following family of spectrahedra parameterized by $\bmb \in
\mathbb{R}^m$:
\begin{equation*}
    \Feas(\bmb) := \{ \bmX \succeq 0 : \mathcal{A}(\bmX) = \bmb \}.
\end{equation*}
Specifically, given a convergent sequence $\{ \bmb^k \} \to \bb$, we
wish to understand conditions guaranteeing that $\{ \Feas(\bmb^k) \}$
converges to $\Feas(\bb)$. (Convergence of sets is defined precisely
in the paragraph after next.)

For simplicity, we assume that all sets $\{ \Feas(\bmb^k) \}$ and
$\Feas(\bb)$ are bounded with interior, i.e., each contains a
feasible point satisfying $\bmX \succ 0$ and the recession cone $\{ \Delta
\bmX \succeq 0 : \mathcal{A}(\Delta \bmX) = 0 \}$, which is common to all $\Feas(\bmb)$,
is trivial. Topologically speaking, the set $\{ \bmX \succ 0 : \mathcal{A}(\bmX) = \bmb
\}$ is the relative interior of $\Feas(\bmb)$.

We use the following definition of a convergent sequence of bounded
sets: a sequence of bounded sets $\{ \Lc^k \}$ converges to a bounded
set $\overline \Lc$, written $\{ \Lc^k \} \to \overline \Lc$, if and only if: (i)
given any sequence $\{ \bmX^k \in \Lc^k \}$, every limit point $\Xb$
of the sequence satisfies $\Xb \in \overline \Lc$; and (ii) every member
$\Xb \in \overline \Lc$ is the limit point of some sequence $\{ \bmX^k \in
\Lc^k \}$.

\subsubsection{Convergence of feasible sets}

Relative to $\{ \Feas(\bmb^k) \}$ and $\Feas(\bb)$, we define
$\Lc(\{\Feas(\bmb^k)\})$ to be the collection of all limit points of the
sequence of sets $\{ \Feas(\bmb^k) \}$:
\[
    \Lc(\{\Feas(\bmb^k)\}) := \left\{ \Xb : \exists \ \{ \bmX^k \in \Feas(\bmb^k) \}
    \text{ s.t. } \Xb \text{ is a limit point of } \{ \bmX^k \} \right\}.
\]
\sloppypar{Then convergence $\{ \Feas(\bmb^k) \} \to \Feas(\bb)$ is equivalent to
the statement $\Lc(\{ \Feas(\bmb^k)\}) = \Feas(\bb)$. The left-to-right
containment is straightforward.}

\begin{proposition} \label{pro:lefttoright}
$\Lc(\{ \Feas(\bmb^k)\}) \subseteq \Feas(\bb)$.
\end{proposition}

\begin{proof}
Let $\Xb \in \Lc(\{ \Feas(\bmb^k)\})$. By definition, passing to a
subsequence if necessary, there exists a sequence $\{ \bmX^k \in \Feas(\bmb^k)
\}$ converging to $\Xb$. The individual feasibility systems $\bmX^k \succeq
0, \mathcal{A}(\bmX^k) =\bmb^k$ along with $\{\bmb^k \} \to \bb$ ensure that $\Xb
\succeq 0, \mathcal{A}(\Xb) = \bb$, i.e., that $\Xb \in \Feas(\bb)$, as
desired.
\end{proof}

Proving the right-to-left inclusion $\Lc(\{ \Feas(\bmb^k)\}) \supseteq
\Feas(\bb)$ is more involved. We start by showing that $\Relint(
\Feas(\bb))$ is a subset of $ \Lc(\{ \Feas(\bmb^k)\})$.

\begin{lemma}
Every sequence $\{\bmX^k \in \Feas(\bmb^k) \}$ has a limit point. In
particular, $\Lc(\{\Feas(\bmb^k) \})$ is nonempty.
\end{lemma}

\begin{proof}
We argue that $\{\bmX^k\}$ is bounded, so that it has a limit point $\Xb \in \Lc( \{ \Feas(\bmb^k) \} )$. Suppose for contradiction that the
sequence is unbounded. Then there exists a subsequence $\{\bmX^k \}$ of
feasible solutions with $\|\bmX^k \|_\mathrm{F} \to \infty$. It follows that the
normalized subsequence $\{ \Delta\bmX^k :=\bmX^k / \|\bmX^k\|_\mathrm{F} \}$ is bounded
and satisfies
\[
    \Delta\bmX^k \ge 0, \quad \mathcal{A}( \Delta\bmX^k) =\bmb^k / \|\bmX^k\|_\mathrm{F}, \quad
    \|\Delta\bmX^k \|_\mathrm{F} = 1.
\]
Hence, there exists a limit point $\overline{\Delta \bmX}$
satisfying $\overline{\Delta \bmX} \ge 0, \mathcal{A} ( \overline{\Delta \bmX} ) = 0,
\|\overline{\Delta \bmX} \|_\mathrm{F} = 1$. However, this contradicts the assumption
that the recession cone is trivial.
\end{proof}

\begin{proposition} \label{pro:partialconverse}
$\Relint(\Feas(\bb)) \subseteq \Lc( \{ \Feas(\bmb^k) \} )$.
\end{proposition}

\begin{proof}
For notational convenience, define $\Lc := \Lc( \{ \Feas(\bmb^k) \} )$. Let
$\Xb \in \Relint(\Feas(\bb))$ be given, i.e., $\Xb$ satisfies
$\Xb \succ 0$ and $\mathcal{A}(\Xb) = \bb$. We will show $\Xb \in \Lc$
by ``bootstrapping'' it from an arbitrary $\hat \bmX \in \Lc$. Note that
$\Lc \ne \emptyset$ by the lemma---so that $\hat \bmX$ exists---and that
$\hat \bmX \in \Feas(\bb)$ by Proposition \ref{pro:lefttoright}. By
definition, passing to a subsequence if necessary, there exists $\{\bmX^k
\in \Feas(\bmb^k) \} \to \hat \bmX$.

Define $\Delta \bmX := \Xb - \hat \bmX$. We claim that $\{\bmX^k + \Delta
\bmX \}$, which clearly converges to $\hat \bmX + \Delta \bmX = \Xb$,
establishes $\Xb \in \Lc$. It remains to verify $\bmX^k + \Delta \bmX \in
\Feas(\bmb^k)$ for large $k$. Since $\mathcal{A}(\Delta \bmX) = \mathcal{A}(\Xb - \hat \bmX) =
\bb - \bb = 0$, it holds that $\mathcal{A}(\bmX^k + \Delta \bmX) =\bmb^k + 0 =\bmb^k$
for all $k$. Moreover, since $\Xb \succ 0$, the tail of $\{\bmX^k + \Delta
\bmX \}$ must eventually satisfy $\bmX^k + \Delta \bmX \succ 0$, as desired.
\end{proof}

\noindent We remark that Propositions \ref{pro:lefttoright} and
\ref{pro:partialconverse} together show $\Relint(\Feas(\bb))
\subseteq \Lc( \{ \Feas(\bmb^k) \} ) \subseteq \Feas(\bb)$. If $\Lc(
\{ \Feas(\bmb^k) \} )$ were a closed set, then we would have the desired
result that $\Lc( \{ \Feas(\bmb^k) \} ) = \Feas(\bb)$. However, we do
not have a direct proof that it is closed.

Next, we show that every extreme point of $\Feas(\bb)$ is a member
of $\Lc(\{ \Feas(\bmb^k)\})$ with a special property. The notation
$\Ext(\Feas(\bmb))$ indicates the set of extreme points of $\Feas(\bmb)$ for a
given $\bmb$.

\begin{proposition} \label{pro:extreme}
Let $\Xb \in \Ext(\Feas(\bb))$. Then there exists a full sequence
$\{\bmX^k \in \Feas(\bmb^k) \}$, not just a subsequence, converging to $\Xb$. In particular, $\Xb \in \Lc(\{ \Feas(\bmb^k)\})$.
\end{proposition}

\noindent To prove the proposition, we recall that there exists $\Cb$
such that $\Xb$ is the unique optimal solution of
\[
    v(\bb, \Cb) := \min \{ \Cb \bullet \bmX : \bmX \in \Feas(\bb) \}.
\]
We also define
\[
    v(\bmb^k, \Cb) := \min \{ \Cb \bullet \bmX : \bmX \in \Feas(\bmb^k) \}.
\]

\begin{lemma}
$\{ v(\bmb^k, \Cb) \} \to v(\bb, \Cb)$.
\end{lemma}

\begin{proof}
Note that $\{ v(\bmb^k, \Cb) \}$ is bounded. If not, then there exists
an unbounded sequence $\{\bmX^k \in \Opt(\bmb^k, \Cb) \}$ of optimal
solutions such that $\Cb \bullet\bmX^k \to -\infty$ with $\|\bmX^k\|_\mathrm{F} \to
\infty$. As in the proof of the above lemma, this contradicts that the
recession cone is trivial. So in fact $\{ v(\bmb^k, \Cb) \}$ is bounded.

Then, to prove the result, let $\hat v$ be an arbitrary limit point of
$\{ v(\bmb^k, \Cb) \}$. We will show $\hat v = v(\bb, \Cb)$ using
Propositions \ref{pro:lefttoright} and \ref{pro:partialconverse}.

First, let $\{\bmX^k \in \Opt(\bmb^k, \Cb) \}$ be a subsequence of optimal
solutions such that $\Cb \bullet\bmX^k = v(\bmb^k, \Cb) \to \hat v$.
Passing to another subsequence if necessary, $\{\bmX^k \}$ converges to
some $\hat \bmX \in \Lc( \{ \Feas(\bmb^k) \} ) \subseteq \Feas(\bb)$ by
Proposition \ref{pro:lefttoright}. Hence, $\hat v = \Cb \bullet \hat
\bmX \ge v(\bb, \Cb)$. Next, let $\epsilon > 0$ be fixed, and take
$\Xb_\epsilon \in \Relint(\Feas(\bb))$ with $\Cb \bullet \Xb_\epsilon \le v(\bb, \Cb) + \epsilon$. Since $\Xb_\epsilon
\in \Lc( \{ \Feas(\bmb^k) \} )$ by Proposition \ref{pro:partialconverse},
there exists $\{\bmX^k \in \Feas(\bmb^k) \} \to \Xb_\epsilon$. It follows
that $$v(\bmb^k, \Cb) \le \Cb \bullet\bmX^k \to \Cb \bullet \Xb_\epsilon \le v(\bb, \Cb) + \epsilon,$$ which proves $\hat v \le
v(\bb, \Cb) + \epsilon$.

Summarizing, for every fixed $\epsilon > 0$, we have $\hat v \le v(\bb, \Cb) + \epsilon \le \hat v + \epsilon$. Hence, $\hat v \le v(\bb, \Cb) \le \hat v$, as desired.
\end{proof}

\noindent Using this lemma, we can now prove Proposition
\ref{pro:extreme}.

\begin{proof}
For all $k$, let $\bmX^k$ be an arbitrary solution of the system
\[
    \Cb \bullet \bmX = v(\bmb^k, \Cb), \quad
    \mathcal{A}(\bmX) =\bmb^k, \quad \bmX \succeq 0,
\]
i.e., an optimal solution of the $k$-th optimization. Then $\{\bmX^k \}$
is bounded, and every limit point must be a solution of
\[
    \Cb \bullet \bmX = v(\bb, \Cb), \quad
    \mathcal{A}(\bmX) = \bb, \quad \bmX \succeq 0,
\]
i.e., must equal $\Xb$. Hence, $\{\bmX^k \}$ converges to $\Xb$.
\end{proof}

As a corollary, we now have our main result in this subsection.

\begin{corollary} \label{cor:main}
$\Lc( \{ \Feas(\bmb^k) \} ) = \Feas(\bb)$, i.e., $\{ \Feas(\bmb^k) \}$
converges to $\Feas(\bb)$.
\end{corollary}

\begin{proof}
Since every point in $\Feas(\bb)$ is a convex combination of extreme
points, we can simply take the same convex combination of full sequences
converging to the extreme points to show that each $\bmX \in \Feas(\bb)$ is
also a member of $\Lc( \{ \Feas(\bmb^k) \})$.
\end{proof}

\subsubsection{Convergence of optimal sets}

% \begin{corollary} \label{cor:main2}
% $\Lc( \{ \Opt(\bmb^k, \Cb) \} ) = \Feas(\bb, \Cb)$, i.e., $\{
% \Feas(b^k, \Cb) \}$ converges to $\Feas(\bb, \Cb)$.
% \end{corollary}
%
% \begin{proof}
% This follows from the same discussion and explanation. We simply note
% that each face $\bar F$ of $\Opt(\bb, \Cb)$ is also a face of
% $\Feas(\bb)$, exposed by a possibly different $\hat C$.
% Hence, similar to (\ref{equ:critical}), we have
% \[
%     \Relint(\bar F) =
%     \Relint(\Opt(\bb, \hat C)) \subseteq \Lc( \{ \Opt(\bmb^k, \hat C) \}
%     \subseteq \Lc( \{ \Opt(\bmb^k, \Cb) \} ).
% \]
% We then combine this with Proposition \ref{pro:lefttoright2} and the
% fact that $\Opt(\bb, \Cb)$ is the disjoint union of the relative
% interior of its faces.
% \end{proof}

Now let $\Cb$ be an arbitrary objective matrix. For any $\bmb$, we
introduce the notation
\begin{align*}
    v(\bmb, \Cb) &:= \min \{ \Cb \bullet \bmX : \bmX \in \Feas(\bmb) \}, \\
    \Opt(\bmb, \Cb) &:= \{ \bmX \in \Feas(\bmb) : \Cb \bullet \bmX = v(\bmb, \Cb) \}
\end{align*}
and ask: when does $\{ \Opt(\bmb^k, \Cb) \}$ converge to $\Opt(\bb,
\Cb)$? As with the above lemma, we have that $\{ v(\bmb^k, \Cb) \}
\to v(\bb, \Cb)$. In analogy with the previous subsection, we also
define
\[
    \Lc(\{\Opt(\bmb^k,\Cb)\}) := \left\{ \Xb : \exists \ \{\bmX^k \in \Opt(\bmb^k,\Cb) \}
    \text{ s.t. } \Xb \text{ is a limit point of } \{\bmX^k \} \right\}
\]
We immediately have a result, which is analogous to Proposition
\ref{pro:lefttoright}.

\begin{proposition} \label{pro:lefttoright2}
$\Lc( \{ \Opt(\bmb^k, \Cb) \}) \subseteq \Opt(\bb, \Cb)$.
\end{proposition}

\begin{proof}
The proof is similar to the proof of Proposition \ref{pro:lefttoright}
except we conceptually replace
\[
    \mathcal{A}(\bmX) \text{ by } \begin{pmatrix} \mathcal{A}(\bmX) \\ \Cb \bullet \bmX \end{pmatrix},
    \quad
  \bmb^k \text{ by } \begin{pmatrix}\bmb^k \\ v(\bmb^k, \Cb) \end{pmatrix},
    \quad
    \bb \text{ by } \begin{pmatrix} \bb \\ v(\bb, \Cb) \end{pmatrix}.
\]
\end{proof}

Next we would like to prove a result that is anlogous to Proposition
\ref{pro:partialconverse}, but this is more challenging because
$\Opt(\bb, \Cb)$ may not contain a positive definite
solution as did $\Feas(\bb)$ in the proof of Proposition
\ref{pro:partialconverse}. We will need an additional assumption on
$\Opt(\bb, \Cb)$.

Indeed, let $\Xb \in \Relint(\Opt(\bb, \Cb))$ with $r :=
\rank(\Xb)$ be arbitrary. Because $\Opt(\bb, \Cb)$ is a face of
$\Feas(\bb)$, it is characterized by $\Range(\Xb)$. Specifically,
let $\Xb = \Qb \, \Qb'$ be any factorization of $\Xb$ with
$\Qb \in \mathbb{R}^{n \times r}$. Then it is well-known that
\[
    \Opt(\bb, \Cb) = \left\{
    \bmX :
    \begin{array}{l}
        \mathcal{A}(\bmX) = \bmb \\
        \bmX = \Qb \bmY \Qb' \\
        \bmY \succeq 0 % \text{ of size } r \times r
    \end{array}
    \right\}
\]
and
\[
    \Relint(\Opt(\bb, \Cb)) = \left\{
    \bmX :
    \begin{array}{l}
        \mathcal{A}(\bmX) = \bmb \\
        \bmX = \Qb \bmY \Qb' \\
        \bmY \succ 0 % \text{ of size } r \times r
    \end{array}
    \right\}.
\]
Note that $\bmY$ has size $r \times r$. In particular, because $\Xb \in
\Relint(\Opt(\bb, \Cb))$, the system
\begin{align*}
    % (\Qb' \Cb \Qb) \bullet \bmY &= v(\bb, \Cb) \\
    (\Qb' \bmA_i \Qb) \bullet \bmY &= \bb_i \quad\quad\quad \forall \ i=1,\ldots,m \\
    \bmY &\succeq 0 
\end{align*}
is interior feasible, where we have used properties of the trace inner
product to write $\bmA_i \bullet \bmX = \bmA_i \bullet (\Qb \bmY \Qb')
= (\Qb' \bmA_i \Qb) \bullet \bmY$. In words, the affine subspace
defined by the $m$ linear equations intersects the interior of the
full-dimensional positive semidefinite cone.

This leads us to our assumption on $\Opt(\bb, \Cb)$. We wish to
have a condition, which will guarantee that the above system remains
interior feasible even if the right-hand-side values $\bb_i$ are
perturbed a bit. A sufficient condition is that the matrices $\Qb'
\bmA_i \Qb_i$, $i = 1,\ldots,m$, are linearly independent.

\begin{proposition} \label{pro:partialconverse2}
Suppose $\{ \Qb' \bmA_i \Qb \}_{i=1}^m$ are linearly
independent. Then $\Relint(\Opt(\bb, \Cb)) \subseteq \Lc( \{
\Opt(\bmb^k, \Cb) \})$.
\end{proposition}

\begin{proof}
For notational convenience, define $\Lc := \Lc( \{ \Opt(\bmb^k, \Cb) \}
)$, and take arbitrary $\bmX^0 \in \Relint(\Opt(\bb, \Cb))$. We wish
to show $\bmX^0 \in \Lc$, that is, there exists a subsequence of points,
each a member of $\Opt(\bmb^k, \Cb)$, converging to $\bmX^0$. From the
discussion before the proposition, there exists $\bmY^0 \succ 0$ such that
$\bmX^0 = \Qb \bmY^0 \Qb'$.

To construct the desired sequence, we note from the discussion before
the proposition that the linear independence of $\{ \Qb' \bmA_i \Qb
\}$ ensures that there exists a subsequence of systems
\begin{align*}
    (\Qb' \bmA_i \Qb) \bullet \bmY &=\bmb^k_i \quad\quad\quad \forall \ i=1,\ldots,m \\
    \bmY &\succeq 0,
\end{align*}
each of which is interior feasible. Take $\{ \bmY^k \}$ to be such an
interior-feasible subsequence with a limit point $\hat \bmY$, and define
$\{\bmX^k := \Qb \bmY^k \Qb' \}$ and $\hat \bmX := \Qb \hat \bmY \Qb'$. We have $\bmX^k \in \Opt(\bmb^k, \Cb)$ converging to $\hat \bmX \in
\Opt(\bb, \Cb)$ by construction.

Given the constructed sequence $\{\bmX^k \in \Opt(\bmb^k, \Cb) \} \to
\hat \bmX \in \Opt(\bb, \Cb)$, we will now show $\bmX^0 \in \Lc$ by
``bootstrapping'' it from $\hat \bmX$. Define $\Delta \bmX := \bmX^0 - \hat
\bmX$. We claim that $\{\bmX^k + \Delta \bmX \}$, which clearly converges to
$\hat \bmX + \Delta \bmX = \bmX^0$, establishes $\bmX^0 \in \Lc$. It remains
to verify $\bmX^k + \Delta \bmX \in \Opt(\bmb^k, \Cb)$ for large $k$. Since
\[
    \Cb \bullet \Delta \bmX = \Cb \bullet (\bmX^0 - \hat \bmX) = v(\bb, \Cb)
    - v(\bb, \Cb) = 0
\]
and
\[
    \mathcal{A} (\Delta \bmX) = \mathcal{A} (\bmX^0 - \hat \bmX) = \bb - \bb = 0,
\]
it holds that
\[
    \Cb \bullet (\bmX^k + \Delta \bmX) = \Cb \bullet\bmX^k = v(\bmb^k, \Cb)
    \quad \text{ and } \quad
    \mathcal{A} (\bmX^k + \Delta \bmX) = \mathcal{A} (\bmX^k) =\bmb^k,
\]
i.e., each $\bmX^k + \Delta \bmX$ satisfies the linear constraints $\mathcal{A} (\bmX) =
\bmb$ and attains the optimal value $v(\bmb^k, \Cb)$. We still need to show
$\bmX^k + \Delta \bmX \succeq 0$ for large $k$.

To prove this, we write
\begin{align*}
   \bmX^k + \Delta \bmX
    &= \Qb \bmY^k \Qb' + \Qb (\bmY^0 - \hat \bmY) \Qb' \\
    &= \Qb (\bmY^k + \bmY^0 - \hat \bmY) \Qb'.
\end{align*}
Since $\{ \bmY^k \} \to \hat \bmY$ and $\bmY^0 \succ 0$, it follows that the tail
of $\bmX^k + \Delta \bmX$ is positive semidefinite.
\end{proof}

With Propositions \ref{pro:lefttoright2} and \ref{pro:partialconverse2}
in hand, the analogies of Proposition \ref{pro:extreme} and Corollary
\ref{cor:main} are proven in the same way.

\begin{proposition} \label{pro:extreme2}
Let $\Xb \in \Ext(\Opt(\bb))$. Then there exists a full sequence
$\{\bmX^k \in \Opt(\bmb^k) \}$, not just a subsequence, converging to $\Xb$. In particular, $\Xb \in \Lc(\{ \Opt(\bmb^k)\})$.
\end{proposition}

\begin{corollary} \label{cor:main2}
$\Lc( \{ \Opt(\bmb^k) \} ) = \Opt(\bb)$, i.e., $\{ \Opt(\bmb^k) \}$
converges to $\Opt(\bb)$.
\end{corollary}

\begin{lemma}
    Let $\mathrm{Opt}(\cb)$ be the optimal set of the dual problem \cref{eq:dual_problem} parameterized by $\cb = (\Mb_1,\hdots,\Mb_k)$ such that $\Mb_i$ for all $i \in [k]$ are jointly diagonalizable, and assume the associated LP of the \cref{eq:primal_problem} has a unique optimal solution. Then the linear independence property in \cref{pro:partialconverse2} holds.
\end{lemma}

\begin{proof}
 When $\Mb_i$ for all $i \in [k]$ are jointly diagonalizable, \cref{eq:dual_problem} reduces to a linear program:
 
 \begin{align}
    \min& ~~\bmp' \bmb \label{sdpd-std}\\
    &\text{s.t.}~~ \A \bmp = \overline{\bmm}, ~~ \bmp \geq 0, \nonumber
\end{align}
where
$\bmp := \begin{bmatrix} \bmy' & \bmz_1' & \cdots & \bmz_k' & \nu_1 & \cdots & \nu_k \end{bmatrix}'$ is the dual variables stacked into a single vector in $\mathbb{R}^{dk + d + k}$, and
\begin{align}
\bmb &:= \begin{bmatrix} \bme_d' & 0_{d}' & \hdots & 0_d' & \bme_k' \end{bmatrix}', \\
\A &:= \begin{bmatrix}  \bmI_d & -\bmI_d & 0 & \cdots & 0 & \bme & 0 & \cdots & 0 \\ \bmI_d & 0 & -\bmI_d & \cdots & 0 & 0 & \bme & \cdots & 0 \\
\vdots & \vdots & \vdots & \ddots & \vdots & \vdots & \vdots & \ddots & \vdots \\ \bmI_d & 0 & 0 & \cdots & -\bmI_d & 0 & \cdots & \cdots & \bme\end{bmatrix}, \quad \bmm := \begin{bmatrix} \bmm_1 \\ \bmm_2 \\ \vdots \\ \bmm_k \end{bmatrix}.
\end{align}

 Reexpressing the linear program as an SDP using nonnegative diagonal matrices with $\bmy$, $\bmz_i$ and $\bmm_i$ along the diagonals, the equivalent dual problem is
\begin{align}
\begin{split}
    \min_{\bmX \succeq 0}& ~~ \langle \overline{\bmC}, \bmX \rangle \\
    &\text{s.t.}~\langle \bmA_i, \bmX \rangle = \overline{m}_i \quad \forall i \in [dk]
\end{split}
\end{align}
where $\bmX = \mathrm{diag}(\begin{bmatrix}\bmy' & \bmz_1' & \cdots & \bmz_k' & \nu_1 & \cdots & \nu_k \end{bmatrix}')$ and $\overline{m}_i$ is the $i^{th}$ entry of the vector formed by concatenating the diagonalized data matrices, and $\overline{\bmC}:=\mathrm{diag}(\bmb)$. The linear constraints parameterized by $\bmA_i := \mathrm{diag}(\bmA_{i,:})$, where $\bmA_{i,:}$ is the $i^{th}$ row of $\A$, capture the equalities $\bmy = \bmm_i + \bmz_i - \nu_i \bme_d$.

Let $\Xb \in \mathrm{Relint}(\mathrm{Opt}(\overline{\bmm}, \overline{\bmC}))$ and $(\bmy, \bmz_1, \hdots, \bmz_k,\nu_1,\hdots,\nu_k)$ be the optimal solution to the dual LP, where $\bmy = \diag(\bmY)$ and $\bmz_i = \diag(\bmZ_i)$ are the vectors extracted from the diagonal matrices, and $\diag(\bmy) = \bmY$ and $\diag(\bmz_i) = \bmZ_i$ are diagonal matrices.

From \cref{lem:gt}, the unique optimal solution to the assignment LP has the property that each $\diag(\bmX_i)$ is a standard basis vector, and the associated dual variables $\diag(\bmZ_i)$ are rank $d-1$. Combined with the the KKT complementarity condition $\bmX_i \bmZ_i = 0$, then each $\diag(\bmZ_i)_j = 0$ for the single $j \in [d]$ where $\diag(\bmX_i)_j = 1$. A similar result using the Goldman-Tucker strict complementarity theorem for LP holds for $\diag(\bmY)$ and $\diag(\bmI - \sum_{i=1}^k \bmX_i)$: there exists an optimal primal-dual pair such that $\diag(\bmI - \sum_{i=1}^k \bmX_i) + \diag(\bmY) > 0$. Hence, there exists a dual optimal solution with $\rank(\bmY) \ge k$. From KKT complementarity $(\bmI - \sum_{i=1}^k \bmX_i)\bmY = 0$, we have necessarily that $\rank(\bmY) = k$, and $\diag(\bmY)_j > 0$ for all $j \in [d]$ such that $\sum_{i=1}^k \diag(\bmX_i)_j = 1$, and zero on the remaining $d-k$ coordinates. Therefore, $\diag(\bmY)_j > 0 $ for all $i \in [k], j \in [d]$ such that $\diag(\bmZ_i)_j = 0$, and zero elsewhere. 

Then
\begin{align}
    \mathrm{rank}(\Xb) \leq \mathrm{nnz}(\diag(\bmY)) + \sum_{i=1}^k \mathrm{nnz}(\diag(\bmZ_i)) + k = k(d+1),
\end{align}
where an additional $k$ nonzeros are possible from the $\nu_i$'s.
Then there exists a $\Qb \in \bbR^{(dk + d + k) \times r}$ for $dk \leq r \leq dk + k$ such that $\Xb = \Qb\,\Qb'$. Let $\Omega \subset \{1,\hdots,dk + d + k\}$, where $|\Omega| = r$, denote the set of nonzero entries on the diagonal of $\Xb$.

Let $\Qb = \Xb_\Omega^{1/2}$, where $\Xb_{\Omega}$ denotes the submatrix restriction of $\Xb$ to columns with nonzero entries. Without loss of generality \edit{by \cref{lem:nuneg}}, assume $\nu_i > 0$ for all $i \in [k]$. Expressing $\{\Qb' \bmA_i \Qb\}_{i=1}^{dk}$ as a linear system of equations over the indices in $\Omega$,
\begin{align}
\label{eq:duallp:Aq}
    \bmA_{\Qb} &:= \begin{bmatrix}  \bmA_{\bmy} & \bmA_{\bmz_1} & 0 & \cdots & 0 & \nu_1 \bme & 0 & \cdots & 0 \\ \bmA_{\bmy} & 0 & \bmA_{\bmz_2} & \cdots & 0 & 0 & \nu_2 \bme & \cdots & 0 \\
\vdots & \vdots & \vdots & \ddots & \vdots & \vdots & \vdots & \ddots & \vdots \\ \bmA_{\bmy} & 0 & 0 & \cdots & \bmA_{\bmz_k} & 0 & \cdots & \cdots & \nu_k\bme\end{bmatrix}.
\end{align}

Above, $\bmA_{\bmy}$ denotes the diagonal matrix $\diag(\bmy)$ restricted to its $k$ columns with nonzero entries, and similarly each $\bmA_{\bmz_i}$ denotes the diagonal matrix $-\diag(\bmz_i)$ restricted to its $d-1$ columns with nonzero entries. From complementarity, the first $k + dk$ columns of $\bmA_{\overline{\bmQ}}$ contain $k + k(d-1) = dk$ linearly independent columns. Thus, the matrix has full row-rank, indicating the matrices $\{\Qb' \bmA_i \Qb\}_{i=1}^{dk}$ are linearly independent.
\end{proof}
\subsection{Supporting lemmas}

\begin{lemma}{Lin's Theorem \cite{supp:loring2016almost,supp:glashoff_bronstein13}:}
\label{lem:lins_thm}
     For all $\epsilon > 0$ there exists a $\delta > 0$ such that if $\|[\A,\B]\|_2 := \|\A \B - \B \A \|_2 \leq \delta$ for Hermitian symmetric matrices $\A$ and $\B$ where $\|\A\| \leq 1$ and $\|\B\| \leq 1$, then there exist Hermitian symmetric, commuting matrices $\widetilde \A$ and $\widetilde \B$ in $\R^{d \times d}$ such that $\|[\tilde \A, \tilde \B]\| = 0$ and $\|\A - \tilde \A\|_2 \leq \epsilon$ and $\|\B - \tilde \B\|_2 \leq \epsilon$.
\end{lemma}

\revise{\begin{lemma}{Hilbert-Schmidt analogue to Lin's Theorem \cite[Theorem 4]{supp:glebsky2010almost}, \cite[Theorem 3]{supp:Filonov2010AHA}:}
\label{lem:hilbert_schmidt_lins_thm}
     Let $(\bmA_1,\hdots,\bmA_k)$ be a tuple of self-adjoint matrices of unit spectral norm for $k \geq 3$. 
     For all $\epsilon > 0$ there exists a $\delta(\epsilon,k) > 0$, where $\delta(\epsilon,k) \rightarrow 0$ as $\epsilon \rightarrow 0$, such that if $\|[\A_i,\A_j]\|_{\mathrm{tr}} := \|\A_i \A_j - \A_j \A_i \|_{\mathrm{tr}} \leq \epsilon$, then there exist Hermitian symmetric, commuting matrices $(\widetilde \A_1,\hdots,\widetilde \A_k)$ such that $[\widetilde \A_i, \widetilde \A_j] = 0$ for $i,j \in [k]$ and $\|\A_i - \widetilde \A_i\|_{\mathrm{tr}} \leq \delta(\epsilon,k)$ for $i \in [k]$.
\end{lemma}
}

\begin{lemma}{\edit{Concentration of the sample covariance matrix for centered Gaussian random variables \cite{supp:Koltchinskii2017,supp:Lounici2014}:}}
\label{lem:sample_covar_error_bound}
Let \textcolor{black}{$\bmy_1, \hdots, \bmy_n \in \bbR^d$} be i.i.d. centered Gaussian random variables %in a separable Banach space 
with covariance operator $\bmSigma$ and sample covariance $\hat \bmSigma  = \frac{1}{n} \sum_{i=1}^n \bmy_i \bmy_i'$. Then
with some constant $C > 0$ and with probability at least $1 - e^{-t}$ for $t > 0$,
\begin{align*}
    \|\hat \bmSigma - \bmSigma \| \leq C \|\bmSigma\| \max \left\{ \sqrt{\frac{\tilde{r}(\bmSigma) \log d + t}{n}} , \frac{(\tilde{r}(\bmSigma)\log d + t)\log n}{n} \right\},
\end{align*}
where $\tilde{r}(\bmSigma) := \tr(\bmSigma) / \|\bmSigma\|$.
\end{lemma}

\section{Investigations into \cref{eq:sum_heterogeneous_quadratics} and variants}

\subsection{Shor relaxation}
\label{appendix:shor_relax}

% The authors of \cite{supp:cifuentes2022stability} analyze the Shor SDP relaxation for quadratically constrained quadratic programs.
Quadratically constrained quadratic programs, such as the one we study in this paper in \cref{eq:sum_heterogeneous_quadratics}, can alternatively be relaxed using the popular Shor SDP formulation \cite{supp:cifuentes2022stability, supp:huang2009rank}.
% Here, we derive an alternative SDP formulation of the original nonconvex problem \cref{eq:sum_heterogeneous_quadratics} using the popular Shor SDP relaxation,
Here, we derive this particular relaxation,
but we show it does not return tight solutions with the ROP for our problem. First, let $\M$ be the matrix with the data matrices on its block-diagonal,
\begin{equation*}
    \M = \begin{bmatrix}\M_1 & & \\ & \ddots & \\ & & \M_k\end{bmatrix}.
\end{equation*}
Rewriting the Stiefel manifold constraints in \cref{eq:sum_heterogeneous_quadratics} gives the equivalent optimization problem in the variable $\bmx = [\bmu_1' \hdots \bmu_k']' \in \bbR^{dk}$:
\begin{align}
    \max_{\bmx \in \R^{dk}}  \bmx' \M \bmx \quad \mathrm{s.t.}~~ \bmx' \bmC_{i,j} \bmx = \mathbb{I}_{i = j}\quad \forall i,j \in [k],
\end{align}
where $\mathbb{I}_{i=j}$ denotes the indicator function that is equal to 1 if $i = j$ and 0 if $i \neq j$, 
and the matrices $\bmC_{i,j} \in \mathbb{R}^{dk \times dk}$ capture the trace-1 and orthonormality constraints on the columns of $\bmU$. More precisely, $\bmC_{i,j} = \bmE_{i,j} \otimes \I_d \quad \forall i, j \in [k]$,
where $\bmE_{i,j} = \bme_i \bme_j'$ and $\bme_i$ is the $i^\mathrm{th}$ standard basis vector in $\bbR^k$.
Lifting the optimization problem in terms of the variable $\bmX = \bmx \bmx'$, we obtain the equivalent problem
\begin{align}
\begin{split}
    &\max_{\bmX \in \mathbb{S}^{dk}} \tr(\M \bmX) \quad \mathrm{s.t.}~~ \tr(\bmC_{i,j} \bmX) = \mathbb{I}_{i=j} \quad \forall i,j \in [k], \quad \mathrm{rank}(\bmX) = 1.
\end{split}
\end{align}
Relaxing the above nonconvex problem by dropping the rank constraint gives the SDP
\begin{align}
\label{eq:shor_sdp}
    \max_{\bmX \succcurlyeq 0} \tr(\M \bmX ) \quad \mathrm{s.t.}~~ \tr(\bmC_{i,j} \bmX ) = \mathbb{I}_{i=j} \quad \forall i,j \in [k].
\end{align}
\edit{We note here that to the best of our knowledge, the constraint $\sum_{i=1}^k \bmX_i \preceq \I$ cannot be captured in this framework.}

We now show the optimal solution to \cref{eq:shor_sdp} has an analytical solution and does not recover the solution to the original nonconvex problem on the Stiefel manifold. Let $\bmv_i$ be the first principal eigenvector of $\M_i$ for all $i \in [k]$, and let $\bmV_i \triangleq \bmv_i \bmv_i'$. Now let $$\bmX^* \triangleq \begin{bmatrix} \bmV_1 & & \\ & \ddots & \\ & & \bmV_k \end{bmatrix}.$$
One can check that $\bmX^*$ is a feasible solution to \cref{eq:shor_sdp}. It is also optimal with respect to \cref{eq:shor_sdp}. However, the vectors $\bmv_1, \dots, \bmv_k$ need not be mutually orthogonal, so the SDP is not tight with respect to the original problem on the Stiefel manifold.
\subsection{Counterexample for convex-hull result} \label{sec:counterexample}

The feasible set of our primal semidefinite program (\ref{eq:primal_problem}) is, by construction,  a convex relaxation of the set
\begin{equation} \label{eq:decomposedstiefel}
\left\{
(\bmu_1 \bmu_1',\ldots,\bmu_k \bmu_k')
: \bmU' \bmU = \bmI
\right\},
\end{equation}
where the $i$-th variable $\X_i$ in the SDP is a relaxed version of the rank-1 product $\bmu_i \bmu_i'$.
In this paper, we have investigated when (\ref{eq:primal_problem}) satisfies the rank-1 property (ROP), i.e., when its optimal solution $(\X_1, \ldots, \X_k)$ has $\mathrm{rank}(\X_i) = 1$ for each $i=1,\ldots,k$.

A natural question is whether the feasible set of (\ref{eq:primal_problem}) captures the convex hull of \eqref{eq:decomposedstiefel} exactly. If this were true, then (\ref{eq:primal_problem}) would satisfy the ROP for all objective coefficients $(\bmM_1,\ldots,\bmM_k)$. In this section, we show by counterexample that this is {\em not\/} the case. Note, however, that the ROP may still hold for certain subclasses of $(\bmM_1,\ldots,\bmM_k)$. Indeed, we have shown in the paper that the ROP property of \cref{eq:primal_problem} holds for jointly diagonalizable $\bmM_i$, which conforms with the theory in \cite{supp:bolla:98}.

To build our counterexample demonstrating that the feasible set of \cref{eq:primal_problem} does not exactly capture the convex hull of \cref{eq:decomposedstiefel}, we set $d = 4$ and $k = 2$. We then claim that the matrix $\bmX = \bmX_1 + \bmX_2$ given by
\[
    \bmX_1 :=
    \frac12
    \begin{pmatrix}
        1 & 0 & 0 & 0 \\
        0 & 1 & 0 & 0 \\
        0 & 0 & 0 & 0 \\
        0 & 0 & 0 & 0
    \end{pmatrix}
\]
and
\[
    \bmX_2 :=
    \frac1{12}
    \begin{pmatrix}
        3 & 1 & 3 & 1 \\
        1 & 3 & 1 & 3 \\
        3 & 1 & 3 & 1 \\
        1 & 3 & 1 & 3
    \end{pmatrix}
\]
constitutes a feasible solution of (\ref{eq:primal_problem}) but, at the same time, cannot be a strict convex combination of points in (\ref{eq:decomposedstiefel}). Said differently, we will show $(\bmX_1, \bmX_2)$ is feasible for (\ref{eq:primal_problem}) but not for the convex hull of (\ref{eq:decomposedstiefel}), thus establishing that these two sets are different. 

One can easily check that $(\bmX_1, \bmX_2)$ is feasible for (\ref{eq:primal_problem}). Note also that $\rank(\bmX_1) = \rank(\bmX_2) = 2$, so that $(\bmX_1, \bmX_2)$ itself is not an element of \cref{eq:decomposedstiefel}.  In addition, it is easy to verify that $\rank(\bmX) = 4$ and $\lambda_{\max}[\bmX] = 1$. The contrapositive of the following proposition proves that,  because $\rank(\bmX) = 4$, $(\bmX_1, \bmX_2)$ cannot be a strict convex combination of points in (\ref{eq:decomposedstiefel}). 

%$\bmX^{(j)} = \bmX^{(j)}_1 + \bmX^{(j)}_2$ for some $j = 1,\ldots,J$ such that every $\bmX^{(j)}_i$ is rank-1. Said differently, $(\bmX_1, \bmX_2)$ cannot be a strict convex combination of elements of \cref{eq:decomposedstiefel}.

\begin{proposition}
Let $d \ge k = 2$ be given. Suppose $\bmX = \bmX_1 + \bmX_2$ is feasible for \cref{eq:primal_problem} such that:

\begin{itemize}

\item $(\bmX_1,\bmX_2)$ is a strict convex combination of points in \cref{eq:decomposedstiefel}, i.e., for some integer $J \ge 2$, there exist positive scalars $\lambda_1, \ldots, \lambda_J$ and Stiefel matrices
\[
    \bmU^{(j)} := \begin{pmatrix} \bmu_1^{(j)} & \bmu_2^{(j)} \end{pmatrix}
    \in \mathbb{R}^{d \times 2}
    \quad \forall \ j = 1,\ldots,J
\]
such that
\[
    (\bmX_1,\bmX_2) 
    = \sum_{j=1}^J \lambda_j \left( \bmu_1^{(j)}
    (\bmu_1^{(j)})', \bmu_2^{(j)} (\bmu_2^{(j)})'\right),
    \quad \quad \sum_{j=1}^J \lambda_j = 1;
\]

\item $\rank(\bmX_1) = \rank(\bmX_2) = 2$;

\item $\lambda_{\max}[\bmX] = 1$.

\end{itemize}

\noindent Then $\rank(\bmX) \le 3$.
\end{proposition}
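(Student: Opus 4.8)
Below is the sketch I would follow.

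The plan is to recast the claim as a statement about the ranges $S_i := \operatorname{range}(\bmX_i)$. Since each $\bmX_i = \sum_{j=1}^J \lambda_j\, \bmu_i^{(j)}(\bmu_i^{(j)})'$ with all $\lambda_j > 0$, its range is $S_i = \operatorname{span}\{\bmu_i^{(j)} : j = 1,\dots,J\}$, and by hypothesis $\dim S_i = 2$. Invoking the elementary identity $\operatorname{range}(\bmA+\bmB) = \operatorname{range}(\bmA) + \operatorname{range}(\bmB)$ for positive semidefinite $\bmA,\bmB$, we get $\operatorname{range}(\bmX) = S_1 + S_2$, so $\rank(\bmX) = \dim(S_1+S_2) \le 4$. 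Thus it suffices to prove $S_1 \cap S_2 \neq \{\bm{0}\}$, since that forces $\dim(S_1+S_2) \le 3$. I would argue this by contradiction, assuming $S_1 \cap S_2 = \{\bm{0}\}$ so that $S_1 + S_2$ is a direct sum and each of its elements has a unique $S_1$-part and $S_2$-part.

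The engine of the proof is the hypothesis $\lambda_{\max}[\bmX] = 1$. Together with $\bmX \preccurlyeq \I$ (primal feasibility), it yields a unit vector $\bmv$ with $\bmX\bmv = \bmv$, hence $\bmv'\bmX\bmv = 1$. Write $\bmX = \sum_j \lambda_j \bmP^{(j)}$ with $\bmP^{(j)} := \bmU^{(j)}(\bmU^{(j)})'$ the orthogonal projector onto the $2$-dimensional subspace $\operatorname{span}(\bmU^{(j)})$; each summand obeys $\bmv'\bmP^{(j)}\bmv = \|\bmP^{(j)}\bmv\|^2 \le 1$. Then $1 = \bmv'\bmX\bmv = \sum_j \lambda_j\,\bmv'\bmP^{(j)}\bmv \le \sum_j \lambda_j = 1$ must be tight in every term, so $\bmv'\bmP^{(j)}\bmv = 1$, i.e.\ $\bmv \in \operatorname{range}(\bmP^{(j)}) = \operatorname{span}(\bmU^{(j)})$, for \emph{every} $j$.

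Now finish with a uniqueness argument. For each $j$, since $\bmv$ lies in the span of the orthonormal pair $\bmu_1^{(j)},\bmu_2^{(j)}$, write $\bmv = a_j \bmu_1^{(j)} + b_j \bmu_2^{(j)}$ with $a_j^2 + b_j^2 = 1$. As $a_j\bmu_1^{(j)} \in S_1$ and $b_j\bmu_2^{(j)} \in S_2$, this is the $S_1$-plus-$S_2$ decomposition of $\bmv$, so by uniqueness $a_j\bmu_1^{(j)}$ equals a single fixed vector $\bmp \in S_1$ for all $j$. If $\bmp = \bm{0}$, then every $a_j = 0$, forcing $\bmu_2^{(j)} = \pm \bmv$ and hence $\bmX_2 = \bmv\bmv'$, which contradicts $\rank(\bmX_2) = 2$; so $\bmp \neq \bm{0}$. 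But then $a_j \neq 0$ and $\bmu_1^{(j)} = \bmp/a_j = \pm\, \bmp/\|\bmp\|$ for every $j$, whence $\bmX_1 = (\bmp/\|\bmp\|)(\bmp/\|\bmp\|)'$, contradicting $\rank(\bmX_1) = 2$. This contradiction gives $S_1 \cap S_2 \neq \{\bm{0}\}$ and therefore $\rank(\bmX) \le 3$.

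The step I expect to be the main obstacle is the second one: extracting from the purely spectral fact $\lambda_{\max}[\bmX] = 1$ the rigid combinatorial conclusion that one vector $\bmv$ is common to all the $2$-dimensional subspaces $\operatorname{span}(\bmU^{(j)})$. Once this is secured, uniqueness of the direct-sum decomposition of $\bmv$ collapses $\bmX_1$ (or $\bmX_2$) to rank one almost immediately. A minor point to keep track of is that ``strict convex combination'' with $J \ge 2$ is precisely what supplies $\lambda_j > 0$, which is used both to identify $S_i$ with $\operatorname{span}\{\bmu_i^{(j)}\}$ and in the termwise tightness argument.
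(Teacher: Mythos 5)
Your argument is correct, and it is a genuinely different (and arguably leaner) route to the same conclusion. The paper also identifies a common unit vector $\bmv$ lying in every $\operatorname{span}(\bmU^{(j)})$ via exactly the same tightness argument on $1=\bmv'\bmX\bmv=\sum_j\lambda_j\,\bmv'\bmP^{(j)}\bmv$. But the paper then proceeds by a delicate combinatorial claim: that one can reorder the indices $j$ so that $\operatorname{range}(\bmX_i)=\operatorname{span}\{\bmu_i^{(1)},\bmu_i^{(2)}\}$ for \emph{both} $i$ simultaneously. This lets them set $\bmV_j:=\operatorname{span}\{\bmu_1^{(j)},\bmu_2^{(j)}\}$ for $j=1,2$, observe that $\bmW_1+\bmW_2=\bmV_1+\bmV_2$ (both spans of the same four vectors), put $\bmv\in\bmV_1\cap\bmV_2$, and finish with $\dim(\bmV_1+\bmV_2)\le 2+2-1=3$. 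The reordering lemma requires a small but fiddly case analysis (what if the first two $\bmu_1^{(j)}$ span $\operatorname{range}(\bmX_1)$ but the first two $\bmu_2^{(j)}$ do not span $\operatorname{range}(\bmX_2)$?). You sidestep that entirely: you work directly with $S_i=\operatorname{range}(\bmX_i)$, run the contradiction $S_1\cap S_2=\{\bm 0\}$, and use uniqueness of the $S_1\oplus S_2$ decomposition of $\bmv$ to force the \emph{same} representative $a_j\bmu_1^{(j)}=\bmp$ for every $j$, collapsing $\bmX_1$ (or, if $\bmp=\bm 0$, $\bmX_2$) to rank one. In exchange, you prove only $\dim(S_1\cap S_2)\ge 1$ rather than exhibiting a point of $\bmV_1\cap\bmV_2$, but that is all the rank bound needs. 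Net effect: the two proofs share the same ``common $\bmv$'' engine but differ in how they convert it into a dimension count --- the paper via a span-reduction-then-inclusion-exclusion step, you via a direct-sum uniqueness step that eliminates the reordering lemma. Your version is self-contained and a bit cleaner.
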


\begin{proof}
For each $i \in \{1,2\}$, it holds by assumption that
\begin{align*} \label{equ:local}
    \bmX_i &= \sum_{j=1}^J \lambda_j \bmu_i^{(j)} (\bmu_i^{(j)})' \\
    &=
    \begin{pmatrix}
        \sqrt{\lambda_1} \bmu_i^{(1)} & \cdots & \sqrt{\lambda_J} \, \bmu_i^{(J)}
    \end{pmatrix}
    \begin{pmatrix}
        \sqrt{\lambda_1} \bmu_i^{(1)} & \cdots & \sqrt{\lambda_J} \, \bmu_i^{(J)}
    \end{pmatrix}'.
\end{align*}
This equation ensures, in particular, that $$\mathrm{Range}(\bmX_i) = \mathrm{Span}\{ \bmu_i^{(1)}, \ldots, \bmu_i^{(J)} \};$$ see Lemma 1 of \cite{supp:MR2555055} for example.

We claim that we can reorder the indices $\{1,\ldots,J\}$ such that $\mathrm{Range}(\bmX_i) = \mathrm{Span}(\{ \bmu_i^{(1)}, \bmu_i^{(2)} \})$ for both $i=1,2$ simultaneously. Since $\rank(\bmX_1) = 2$ by assumption, it is clear that we may reorder the indices $\{1, \ldots, J\}$ without loss of generality such that $\mathrm{Range}(\bmX_1) = \mathrm{Span}(\{ \bmu_1^{(1)}, \bmu_1^{(2)} \})$.
%
%Since $\rank(\bmX_1) = 2$ by assumption, we may reorder the indices $\{1, \ldots, J\}$ without loss of generality such that $\text{Range}(\bmX_1) = \text{Span}(\{ \bmu_1^{(1)}, \bmu_1^{(2)} \})$. Furthermore, we claim that, since also $\rank(\bmX_2) = 2$ by assumption, we can reorder $\{1,\ldots,J\}$ one additional time such that $\text{Range}(\bmX_i) = \text{Span}(\{ \bmu_i^{(1)}, \bmu_i^{(2)} \})$ for both $i=1,2$ simultaneously. In other words, we will show that there is an ordering such that $\bmX_1$ ``inherits its rank'' from the vectors $\bmu_1^{(1)}$ and $\bmu_1^{(2)}$, while $\bmX_2$ inherits its rank from $\bmu_2^{(1)}$ and $\bmu_2^{(2)}$. 
%
If $J = 2$, the claim is obvious. So suppose $J > 2$ and that the claim does not hold for the current ordering. Then we can further reorder $\{3,\ldots,J\}$ such that
\begin{align*}
    \mathrm{Range}(\bmX_1) &= \mathrm{Span}(\{ \bmu_1^{(1)}, \bmu_1^{(2)}, \bmu_1^{(3)} \})
    \quad \text{with} \quad \bmu_1^{(1)} \nparallel \bmu_1^{(2)} \\
    \mathrm{Range}(\bmX_2) &= \mathrm{Span}(\{ \bmu_2^{(1)}, \bmu_2^{(2)}, \bmu_2^{(3)} \})
    \quad \text{with} \quad \bmu_2^{(1)} \parallel \bmu_2^{(2)}
    \text{ and } \bmu_2^{(1)} \nparallel \bmu_2^{(3)}.
\end{align*}
We now consider two exhaustive subcases. First, if $\bmu_1^{(1)} \nparallel \bmu_1^{(3)}$, then we see that 
%$\bmX_1$ inherits its rank from $\bmu_1^{(1)}$, $\bmu_1^{(3)}$ and $\bmX_2$ inherits its rank from $\bmu_2^{(1)}$, $\bmu_2^{(3)}$. 
$\mathrm{Range}(\bmX_1) = \mathrm{Span}(\{ \bmu_1^{(1)}, \bmu_1^{(3)} \})$ and $\mathrm{Range}(\bmX_2) = \mathrm{Span}(\{ \bmu_2^{(1)}, \bmu_2^{(3)} \})$.
So by another reordering of $\{1,2,3\}$, the claim is proved. The second subcase $\bmu_1^{(2)} \nparallel \bmu_1^{(3)}$ follows a similar argument. 

With the claim proven that  $\mathrm{Range}(\bmX_i) = \mathrm{Span}(\{ \bmu_i^{(1)}, \bmu_i^{(2)} \})$ for both $i$, define the linear subspaces $$\bmW_i := \mathrm{Span}\{ \bmu_i^{(1)}, \bmu_i^{(2)} \} = \mathrm{Span}\{ \bmu_i^{(1)}, \ldots, \bmu_i^{(J)} \}$$ for each $i=1,2$. Then the equation $\bmX = \bmX_1 + \bmX_2$ implies
\[
    \rank(\bmX) = \dim(\bmW_1 + \bmW_2) =
    \dim(\mathrm{Span}\{ \bmu_1^{(1)}, \bmu_1^{(2)}, \bmu_2^{(1)}, \bmu_2^{(2)} \}).
\]

Next, let $\bmv$ be a maximum eigenvector of $\bmX$ with $\|\bmv\| = 1$ by definition.  Also, for each $j \in \{1, \ldots, J\}$, define $$\bmV_j := \mathrm{Span}\{ \bmu_1^{(j)}, \bmu_2^{(j)} \} = \mathrm{Range}(\bmU^{(j)}),$$ and let
\[
    \alpha_j := (\bmv' \bmu_1^{(j)})^2 + (\bmv' \bmu_2^{(j)})^2 \le 1
\]
be the squared norm of the projection of $\bmv$ onto $\bmV_j$. Because $\lambda_{\max}[\bmX] = 1$, we have
\[
    1 = \bmv' \bmX \bmv = \sum_{j=1}^J \lambda_j \left( (\bmv' \bmu_1^{(j)})^2 + (\bmv' \bmu_2^{(j)})^2 \right) = \sum_{j=1}^J \lambda_j \alpha_j.
\]
Since each $\alpha_j \le 1$ and since $\bmlambda$ is a convex combination, it follows that $\alpha_j = 1$ for all $j$, which then implies $\bmv \in \bmV_j$ for all $j \in \{1, \ldots, J\}$. In particular, $\bmv \in \bmV_1 \cap \bmV_2$.

Finally, we have $\bmW_1 + \bmW_2 = \bmV_1 + \bmV_2$ because both Minkowski sums span the four vectors $\bmu_i^{(j)}$ for $i=1,2$ and $j=1,2$. Hence,
\begin{align*}
    \rank(\bmX) &= \dim(\bmW_1 + \bmW_2) \\
    &= \dim(\bmV_1 + \bmV_2) = \dim(\bmV_1) + \dim(\bmV_2) - \dim(\bmV_1 \cap \bmV_2) \\
    &\le 2 + 2 - 1 = 3.
\end{align*}
where the inequality follows because $\bmv \in \bmV_1 \cap \bmV_2$.
\end{proof}
\subsection{Example of SDP with rank-one solutions, but \texorpdfstring{$\bmM_i$}{Mi} that are not almost commuting}
\label{appendix:example_not_almost_commuting}
In our paper, we give sufficient conditions for when the SDP returns rank-one orthogonal primal solutions in the case the $\bmM_i$ matrices almost commute. However, this is not a necessary condition, and we give a counter-example here.

\begin{proposition}
    Construct $\bmM_i$ for $i=1,\hdots,k$ as follows for given length-$d$ vectors $\bmv_i$, $i=1,\ldots,k$:
    \begin{alignat*}{3}
        \bmM_1 &= \bmv_1 \bmv_1' + && \bmv_2 \bmv_2' + \cdots + &&\bmv_k \bmv_k'\\
        \bmM_2 &= &&\bmv_2 \bmv_2' + \cdots + &&\bmv_k \bmv_k'\\
        &&\vdots\\
        \bmM_k &= && &&\bmv_k \bmv_k'\\
    \end{alignat*}
\sloppypar{\noindent such that $\bmM_1 \succeq \bmM_2 \succeq \cdots \succeq \bmM_1 \succeq 0$. Let $\{\bmu_1,\hdots, \bmu_k \}$ be an orthonormal basis for $\mathrm{Span}\{\bmv_1,\hdots, \bmv_k \}$ such that, for all $i=1,\ldots,k$, $\mathrm{Span}\{\bmu_1,\hdots,\bmu_i\} = \mathrm{Span}\{\bmv_1,\hdots,\bmv_i\}$.
    Then
    $\bmM_i$ for $i=1,\hdots,k$ need not be almost commuting, and $(\Xb_i, \hdots, \Xb_k) = (\bmu_1\bmu_1', \hdots, \bmu_k \bmu_k')$ is the optimal SDP solution with optimal value $p = \tr(\bmM_1)$.}
    
    \begin{proof}
        $\Xb_i$ are clearly feasible with objective value
        \begin{align}
           p &= \langle \bmM_1 , \bmu_1 \bmu_1' \rangle + \langle \bmM_2 , \bmu_2 \bmu_2'\rangle + \cdots + \langle \bmM_k , \bmu_k \bmu_k' \rangle \\
           &= \sum_{i=1}^k (\bmv_i'\bmu_1)^2 + \sum_{i=2}^k (\bmv_i'\bmu_2)^2 + \cdots + \sum_{i=k-1}^k (\bmv_i' \bmu_{k-1})^2 + (\bmv_k'\bmu_k)^2 \\
           &= \sum_{i=1}^k \|\bmv_i\|^2_2 = \tr(\bmM_1).
        \end{align}
        For any feasible solution, we have $$\sum_{i=1}^k \langle \bmM_i , \bmX_i \rangle \leq \sum_{i=1}^k \langle \bmM_1 , \bmX_i \rangle = \langle \bmM_1 , \sum_{i=1}^k \bmX_i \rangle \leq \langle \bmM_1 , \bmI \rangle = \tr(\bmM_1),$$
        since $\bmM_1 \succcurlyeq \bmM_i$ for all $i$ and $\sum_{i=1}^k \bmX_i \preccurlyeq \bmI$. So $\Xb_i$ are optimal.
        
        We next consider a rank-2 case to show the $\bmM_i$ need not be almost commuting. From the construction above, represent $\bmM_1 = \bmv_1\bmv_1' + \bmv_2 \bmv_2'$ and $\bmM_2 = \bmv_2 \bmv_2'$. \textcolor{black}{Suppose that $\|\bmv_1\| \leq 1$ and $\|\bmv_2\| \leq 1$. It is easy to show $\|\bmM_1 \bmM_2 - \bmM_2 \bmM_1\|_2 = |\bmv_2'\bmv_1|\|\bmv_1 \bmv_2' - \bmv_2 \bmv_1'\|_2 \leq \|\bmv_1\| \|\bmv_2\| \|\bmv_1 \bmv_2' - \bmv_2 \bmv_1'\|_\mathrm{F} = \sqrt{2} \|\bmv_1\|^2 \|\bmv_2\|^2 \sin(\theta) \leq \sqrt{2} \sin(\theta)$, where $\theta$ is the angle between the vectors $\bmv_1$ and $\bmv_2$, and this bound could be as large as $\sqrt{2}$. Thus, $\bmM_1$ and $\bmM_2$ need not be almost commuting.}
    \end{proof}
\end{proposition}
\subsection{Extension to the sum of Brocketts with linear terms}
\label{appendix:sums_brocketts_linear_terms}

Given coefficient matrices and vectors $\{(\bmM_i, \bmc_i)\}_{i=1}^k$, where $\bmc_i \in \bbR^d$ for all $i \in [k]$, suppose the problem in \cref{eq:sum_heterogeneous_quadratics} is augmented with linear terms giving the following optimization problem that appears in \cite{supp:breloyMMStiefel2021}:
\begin{align}
    \max_{\bmU: \bmU'\bmU = \bmI} \sum_{i=1}^k \bmu_i' \bmM_i \bmu_i + \bmc_i' \bmu_i.
\end{align}
  
It is then easy to see for the matrices 
\begin{align}
    \tilde{\M}_i &:= \begin{bmatrix} \bmM_i & \bmc_i \\ \bmc_i' & 0 \end{bmatrix}, \qquad \tilde{\bmX}_i := \begin{bmatrix} \bmX_i & \bmu_i \\ \bmu_i' & 1 \end{bmatrix}, \qquad \bmX_i := \bmu_i \bmu_i'
\end{align}
that $\sum_{i=1}^k \bmu_i' \bmM_i \bmu_i + \bmc_i' \bmu_i = \langle \tilde{\M}_i, \tilde{\bmX}_i \rangle.$ Define $\A := [\bmI_d~~ \bm{0}_d']' \in \bbR^{(d+1) \times d}$ and $\bme_{d+1}$ to be the $d+1$-standard basis vector in $\bbR^{d+1}$. Extending \cref{eq:primal_problem} to the case with linear terms, we obtain a generalized relaxation for the problem:
\begin{align}
    \max_{\tilde{\bmX}_i} &\sum_{i=1}^k \langle \tilde{\M}_i, \tilde{\bmX}_i \rangle \\
    &\text{s.t.}~~\A' \sum_{i=1}^k \tilde{\bmX}_i \A \preccurlyeq \bmI\\
    &\quad ~~ \langle \A \A', \tilde{\bmX}_i \rangle = 1, \quad~~ \bme_{d+1}' \tilde{\bmX}_i \bme_{d+1} = 1 \quad~~ \tilde{\bmX}_i \succeq 0.
\end{align}

By the Schur complement, the constraint $\tilde{\bmX}_i \succeq 0$ guarantees that $\bmX_i - \bmu_i \bmu_i' \succeq 0$ and therefore also $\bmX_i \succeq 0$. The linear operator $\A$ acts to impose  the relevant Fantope-like constraints onto the top-left $d \times d$-size submatrices of the primal variables, and the added constraint on the $(d+1,d+1)^{\text{th}}$ element of each $\tilde{\bmX}_i$ forces it to be 1. For dual variables $\tilde{\bmZ}_i \in \mathbb{S}_+^{d+1}$, $\bmY \in \mathbb{S}_+^d$, $\bmnu \in \bbR^k$, and $\xi \in \bbR$, the KKT conditions are
\begin{align}
        &\tilde{\bmX}_i \succeq 0, \quad \A'\sum_{i=1}^k \bmX_i\A \preceq \bmI, \quad \langle \A\A', \tilde{\bmX}_i \rangle = 1, \quad \bme_{d+1}' \tilde{\bmX}_i \bme_{d+1} = 1\\
        &\A \bmY \A' = \tilde{\M}_i + \tilde{\bmZ}_i - \nu_i \A \A' - \xi \bme_{d+1} \bme_{d+1}',  \qquad \bmY \succeq 0 \label{eq:kkt_linear_term:Y}\\
        & \langle \bmI - \A'\sum_{i=1}^k \bmX_i \A, \bmY \rangle = 0 \\
        & \langle \tilde{\bmZ}_i, \tilde{\bmX}_i \rangle = 0\\
        &\tilde{\bmZ}_i \succeq 0,
\end{align}
which, in fact, are the same KKT conditions as before. If we denote $\bmZ_i := \A' \tilde{\bmZ}_i \A$ to be the top $d+1 \times d+1$ positions of $\tilde{\bmZ}_i$, multiplying \cref{eq:kkt_linear_term:Y} by $\A'$ on the left and $\A$ on the right gives back exactly \cref{KKT:b} for the relaxation in \cref{eq:primal_problem}.

\section{Extended experiments}
\label{appendix:experiments}
% \section{Extended Experiments}
% \label{appendix:experiments}

\subsection{Assessing the ROP: random PSD \texorpdfstring{$\bmM_i$}{Mi}}

For $\bmM_i$ that are random PSD matrices of rank $k$, we generate the matrix $\bmA \in \bbR^{d \times k}$ with i.i.d. Gaussian samples and compute $\bmM_i = \bmA \bmA'$.

\edit{The table shows the fraction of trials that resulted in rank-one $\bmX_i$ for all $i=1,\hdots,k$. We computed the average error of the sorted eigenvalues of each optimal solution $\Xb_i$ to $\bme_1$, i.e. $\frac{1}{k}\sum_{i=1}^k \|\diag(\bmSigma_i) - \bme_1\|_2^2$ where $\Xb_i = \bmV_i \bmSigma_i \bmV_i'$, and counted any trial with error greater than $10^{-5}$ as not tight. }

\edit{For rank $k=3$, the SDP solutions possessed the ROP in the vast majority of trials. As the rank or dimension increased, the fraction of trials with ROP declined.}

\begin{table}[htbp]

  \centering
  \begin{tabular}{|c|c|c|c|c|c|}
    \hline
    \multicolumn{2}{|c|}{} &\multicolumn{4}{|c|}{\textbf{Fraction of 100 trials with ROP}} \\
     \cline{3-6}
     \multicolumn{2}{|c|}{}  & $k=3$ & $k=5$ & $k=7$ & $k=10$ \\\hline
   \multirow{5}{*}{\rotatebox{90}{\textbf{RandPSD}}}  & $d=10$ & 0.97 &  0.61 &  0.3 &  0.14 \\\cline{2-6}    
    &$d=20$ & 0.92 & 0.48 & 0.13 & 0  \\\cline{2-6}
   & $d=30$ & 0.93 & 0.53 & 0.14 & 0 \\\cline{2-6}
   & $d=40$ & 0.92 & 0.45 & 0.04 & 0  \\ \cline{2-6}
   & $d=50$ & 0.95 & 0.53 & 0.05 & 0 \\
   \hline
   
  \end{tabular}
  \caption{Numerical experiments showing the percentage of trials where the SDP was tight for random synthetic PSD $\bmM_i$ of rank $k$.}
  \label{tbl:trial_counts:rand_psd}
\end{table}

\subsection{Assessing the ROP: HPPCA}

\cref{tbl:trial_counts_hppca:a-supp} and \cref{tbl:trial_counts:hppca:b-supp} display the full experiment results \edit{for $L=2$ related to the} 
%of their 
abbreviated versions--\cref{tbl:trial_counts_hppca:a} and \cref{tbl:trial_counts:hppca:b}--in \Cref{s:experiments} of the main paper.

\begin{table}[htbp]
\parbox{.45\linewidth}{
\centering
\resizebox{0.45\textwidth}{!}{
\begin{tabular}{|c|c|c|c|c|c|}
    \hline
    \multicolumn{2}{|c|}{} &\multicolumn{4}{|c|}{\textbf{Fraction of 100 trials with ROP}} \\
     \cline{3-6}
     \multicolumn{2}{|c|}{}  & $k=3$ & $k=5$ & $k=7$ & $k=10$ \\\hline
  \multirow{5}{*}{\rotatebox{90}{\textbf{$\bmn = [5,20]$}}}  
 
    &$d=10$&   1  &  0.99   &    1   &    1\\\cline{2-6} 
    &$d=20$&   1  &  0.98  &  0.98 &   0.99\\\cline{2-6} 
    &$d=30$&    0.99  &  0.93  &  0.98  &  0.97\\\cline{2-6} 
    &$d=40$&    0.98  &  0.91  &  0.99 &   0.98\\\cline{2-6} 
    &$d=50$&    0.97 &   0.95 &   0.96  &  0.98\\\cline{2-6} 
    
   \hline  \hline
   \multirow{5}{*}{\rotatebox{90}{\textbf{$\bmn = [10,40]$}}}  &$d=10$&       1   &    1   & 0.99  &     1\\\cline{2-6} 
    &$d=20$&       1   &    1  &  0.98  &  0.99\\\cline{2-6} 
    &$d=30$&       1  &  0.99   & 0.99  &  0.96\\\cline{2-6} 
    &$d=40$&    0.98  &  0.97  &  0.92  &  0.96\\\cline{2-6} 
    &$d=50$&    0.99  &  0.96  &  0.98  &  0.88\\\cline{2-6} 
   \hline \hline
   
   \multirow{5}{*}{\rotatebox{90}{\textbf{$\bmn = [20,80]$}}}   &$d=10$&    1    &    1    &   1    &   1\\\cline{2-6} 
    &$d=20$&    1    &    1    &   1   &    1\\\cline{2-6} 
    &$d=30$&    1   &     1    &   1  &  0.98\\\cline{2-6} 
    &$d=40$&    1    &    1  &  0.97  &  0.95\\\cline{2-6} 
    &$d=50$&    1  &   0.98  &  0.98  &  0.97\\\cline{2-6} 
   \hline \hline
   
   \multirow{5}{*}{\rotatebox{90}{\scriptsize{\textbf{$ \bmn =[50,200]$}}}}   &$d=10$&    1  &   1   &     1   &  1 \\\cline{2-6} 
    &$d=20$&    1  &   1   &     1   &  1 \\\cline{2-6} 
    &$d=30$&    1  &   1   &     1   &  1 \\\cline{2-6} 
    &$d=40$&    1  &   1   &  0.99   &  1 \\\cline{2-6} 
    &$d=50$&    1  &   1   &  0.98   &  1 \\\cline{2-6} 
   \hline \hline
   
   \multirow{5}{*}{\rotatebox{90}{\scriptsize{\textbf{$ \bmn =[100,400]$}}}}  &$d=10$&    1  &   1  &   1   &   1 \\\cline{2-6}
    &$d=20$&    1  &   1  &   1   &   1 \\\cline{2-6}
    &$d=30$&    1  &   1  &   1   &   1 \\\cline{2-6}
    &$d=40$&    1  &   1  &   1   &   1 \\\cline{2-6}
    &$d=50$&    1  &   1  &   1   &   1 \\\cline{2-6}
   \hline
   
  \end{tabular}
  }
\caption{\textbf{(HPPCA)} Numerical experiments showing the percentage of trials where the SDP was tight for instances of the HPPCA problem as we vary $d$, $k$, and $\bmn$ using $L=2$ groups with noise variances $\bmv = [1,4]$.}
  \label{tbl:trial_counts_hppca:a-supp}
}
% \hfill{3mm}
\hspace{8mm}
\parbox{.45\linewidth}{
\centering
\resizebox{0.45\textwidth}{!}{
\begin{tabular}{|c|c|c|c|c|c|}
    \hline
    \multicolumn{2}{|c|}{} &\multicolumn{4}{|c|}{\textbf{Fraction of 100 trials with ROP}} \\
     \cline{3-6}
     \multicolumn{2}{|c|}{}  & $k=3$ & $k=5$ & $k=7$ & $k=10$ \\\hline
  \multirow{5}{*}{\rotatebox{90}{\textbf{$\bmv = [1,1]$}}}  
 
   &$d=10$&    1  &   1   &     1   &  1 \\\cline{2-6} 
    &$d=20$&    1  &   1   &     1   &  1 \\\cline{2-6} 
    &$d=30$&    1  &   1   &     1   &  1 \\\cline{2-6} 
    &$d=40$&    1  &   1   &  1   &  1 \\\cline{2-6} 
    &$d=50$&    1  &   1   &  1   &  1 \\\cline{2-6} 
   \hline  \hline
   
   \multirow{5}{*}{\rotatebox{90}{\textbf{$\bmv = [1,2]$}}}
   &$d=10$&    1  &   1   &     1   &  1 \\\cline{2-6} 
    &$d=20$&    1  &   1   &     1   &  1 \\\cline{2-6} 
    &$d=30$&    1  &   0.98   &     1   &  1 \\\cline{2-6} 
    &$d=40$&    1  &   1   &  0.99   &  1 \\\cline{2-6} 
    &$d=50$&    1  &   1   &  1   &  0.99 \\\cline{2-6} 
   \hline  \hline
   
   \multirow{5}{*}{\rotatebox{90}{\textbf{$\bmv = [1,3]$}}}   &$d=10$&    1  &   1   &     1   &  1 \\\cline{2-6} 
    &$d=20$&    1  &   1   &     1   &  1 \\\cline{2-6} 
    &$d=30$&    0.99  &   0.99   &    0.97   &  0.99 \\\cline{2-6} 
    &$d=40$&    1  &   0.98   &  0.97   &  0.99 \\\cline{2-6} 
    &$d=50$&    1  &   0.97   &  0.96   &  0.98 \\\cline{2-6} 
   \hline  \hline
   
   \multirow{5}{*}{\rotatebox{90}{{\textbf{$\bmv = [1,4]$}}}}  
   &$d=10$&       1   &    1  &  0.99   &    1\\\cline{2-6} 
    &$d=20$&       1   &    1  &  0.98  &  0.99\\\cline{2-6} 
    &$d=30$&       1  &  0.99  &  0.99  &  0.96\\\cline{2-6} 
    &$d=40$&    0.98  &  0.97  &  0.92  &  0.96\\\cline{2-6} 
    &$d=50$&    0.99  &  0.96   & 0.98  &  0.88\\\cline{2-6} 
   \hline 
   
  \end{tabular}
  }
\caption{\textbf{(HPPCA)} Numerical experiments showing the percentage of trials where the SDP was tight for instances of the HPPCA problem as we vary $d$, $k$, and $\bmv$ using $L=2$ groups with samples $\bmn = [10,40]$.}
  \label{tbl:trial_counts:hppca:b-supp}
}
\end{table}

\newpage
\subsection{Assessing global optimality of local solutions}

\paragraph{Further experiment details} For 100 random experiments of each choice of $\sigma$, we obtain candidate solutions $\Xb_i$ from the SDP and perform a rank-one SVD of each to form $\Ub_{\text{SDP}}$, i.e. {$$\Ub_{\text{SDP}} = [\ub_1 \cdots \ub_k], \quad \ub_i = \argmax_{\bmu : \|\bmu\|_2 = 1} \bmu' \Xb_i \bmu, $$}while measuring how close the solutions are to being rank-1. In the case the SDP is not tight, the rank-1 directions of the $\bmX_i$ will not be orthonormal, so as a heuristic, we project $\Ub_{\text{SDP}}$ onto the Stiefel manifold by its QR decomposition. For comparison, we use the Stiefel majorization-minimization (StMM) solver with a linear majorizer \cite{supp:breloyMMStiefel2021} to obtain a candidate solution $\Ub_{\text{MM}}$ and use \cref{thm:dual_certificate} to certify it either as globally optimal or as a stationary point.

When executing each algorithm in practice, we remark that the results may vary with the choice of user specified numerical tolerances and other settings. For the StMM algorithm, we choose a random initialization of $\bmU$ each trial and run the algorithm either for specified maximum number of iterations or until the gradient on the Stiefel manifold is less than some tolerance threshold; here we set $\texttt{tol} = 10^{-10}$. Using MATLAB's CVX implementation to solve \cref{eq:primal_problem} and \cref{eq:dual_certificate}, we found setting \texttt{cvx\_precision} to \texttt{high} guarantees the best results for returning tight solutions and verifying global optimality. However, iterates of the StMM algorithm that converge close to a tight SDP solution may still not be sufficient for the feasibility LMI to return a positive certificate if the solution is not numerically optimal to a high level of precision.

% \putbib
% \bibliographystyle{siamplain}
% \bibliography{references}
% \bibliography{references, references_supplement}
% \bibliography{references_supplement}
\printbibliography[heading=subbibliography]
\end{refsection}
\end{document}